\DeclareMathAlphabet{\mathpzc}{OT1}{pzc}{m}{it}
\theoremstyle{plain}
\newtheorem{theorem}{\scshape Theorem}[section]
\newtheorem{conjecture}[theorem]{\scshape Conjecture}
\newtheorem{proposition}[theorem]{\scshape Proposition}
\newtheorem{lemma}[theorem]{\scshape Lemma}
\newtheorem{corollary}[theorem]{\scshape Corollary}
\theoremstyle{definition}
\newtheorem{definition}[theorem]{\scshape Definition}
\newtheorem{remark}[theorem]{\scshape Remark}
\renewcommand{\hat}{\widehat}
\renewcommand{\tilde}{\widetilde}
\definecolor{grey}{rgb}{0.5,0.5,0.5}
\definecolor{lightgrey}{rgb}{0.9,0.9,0.9}
\definecolor{darkgreen}{rgb}{0,0.6,0}
\definecolor{orange}{rgb}{1,0.5,0}
\def\intint{\int\!\int}
\def\BB{\mathcal B}
\def\HH{\mathcal H}
\def\NN{\mathbb N}
\def\SS{\mathbb S}
\def\RR{\mathbb R}
\def\PP{\mathbb P}
\def\TT{\mathbb T}
\def\TTT{\mathcal T}
\def\SSS{\mathcal S}
\def\ZZ{\mathbb Z}
\def\CC{\mathbb C}
\def\RSZ{\mathcal R}
\newcommand*{\Id}{\ensuremath{\mathrm{Id}}}
\newcommand*{\Tr}{\ensuremath{\mathrm{Tr}}}
\def\eps{\varepsilon}
\def\ee{\mathfrak h}
\newcommand{\norm}[1]{\left\lVert#1\right\rVert}
\newcommand{\abs}[1]{\left\vert#1\right\vert}
\def\tilde{\widetilde}
\numberwithin{equation}{section} 
\def\div{\mathop{\rm div}\nolimits}    
\def\supp{\mathop{\rm supp}\nolimits}    
\def\p{\partial}
\renewcommand{\sfrac}{\frac}
\title{Nonuniqueness of weak solutions to the SQG equation}
\author{
Tristan Buckmaster\footnote{
Courant Institute of Mathematical Sciences,
New York University,
New York, NY 10012, USA.
{\footnotesize \href{mailto:buckmaster@cims.nyu.edu}{buckmaster@cims.nyu.edu}}.
}
\and Steve Shkoller\footnote{
Department of Mathematics,
University of California,
Davis, CA 95616, USA.
{\footnotesize \href{mailto:shkoller@math.ucdavis.edu}{shkoller@math.ucdavis.edu}}.
}
\and Vlad Vicol\footnote{
Department of Mathematics,
Princeton University,
Princeton, NJ 08544, USA.
{\footnotesize \href{mailto:vvicol@math.princeton.edu}{vvicol@math.princeton.edu}}.
}
}
\date{\today}
\begin{document}

\maketitle

\noindent
{\bf Abstract.} 
We prove that weak solutions of the inviscid SQG equations are not unique, thereby answering Open Problem 11 in~\cite{DLSz2012}. Moreover, we also show that weak solutions of the dissipative SQG equation are not unique, even if the fractional dissipation is stronger than the square root of the Laplacian.  

\setcounter{tocdepth}{2} 
\tableofcontents

\section{Introduction}
\label{sec:intro}

The two-dimensional surface quasi-geostrophic (SQG) equation is a fundamental example of  active scalar transport, and is
classically written \cite{CoMaTa1994} as 
\begin{subequations} 
\label{eq:SQG-old}
\begin{align}
&\partial_t \theta + u \cdot \nabla \theta  =0 \,,   \label{eq:SQG-old-a} \\
&u = \RSZ^\perp \theta := \nabla^\perp \Lambda^{-1} \theta \,, \label{eq:SQG-old-b}
\end{align}
\end{subequations}
a transport equation 
for the unknown scalar field $\theta = \theta(x,t)$, where $(x,t)\in \TT^2 \times \RR = [-\pi,\pi]^2 \times \RR$. 
In \eqref{eq:SQG-old},  $\Lambda = (-\Delta)^{1/2}$, $\RSZ = (\RSZ_1,\RSZ_2)$ is the vector of Riesz-transforms, 
$\nabla^\perp = (-\partial_2, \partial_1)$, and   $x^\perp = (-x_2 ,x_1)$ for any vector $x = (x_1,x_2)$.
We consider solutions of the  SQG  equation \eqref{eq:SQG-old} which have zero mean on $\TT^2$, i.e. $\int_{\TT^2} \theta(x,t) dx=0$, a quantity which is conserved in time even for weak solutions.

In the context of geophysical fluid dynamics, the variable $\theta$ denotes the temperature (or surface buoyancy function) in
a rapidly rotating stratified fluid with uniform potential vorticity \cite{HePiGaSw1995} and has applications in both meteorological and oceanic 
flows \cite{Pe1982}.

Mathematically,   two-dimensional SQG flows have the potential for finite-time singularity formation \cite{CoMaTa1994} and
possess striking similarities to three-dimensional Euler solutions; in fact,  the vector $ \nabla^\perp \theta$ is governed by the same evolution equation  as the vorticity of the 3-D Euler flow:
\begin{align}
\partial_t (\nabla^\perp \theta) + u \cdot \nabla (\nabla^\perp \theta) = \nabla u \cdot \nabla^\perp \theta.
\label{eq:SQG:3D:vorticity}
\end{align}
As such, \eqref{eq:SQG-old} has been intensively analyzed over the past two decades. While the local existence of smooth solutions in Sobolev 
spaces $H^s$ with $s>2$, or H\"older spaces $C^{1,\alpha}$ with $\alpha>0$, has been established in \cite{CoMaTa1994}, to date the question of whether a finite-time singularity may develop from smooth initial 
datum
remains open, in analogy to the similar question for the 3-D incompressible Euler system.  When \eqref{eq:SQG-old} is posed
on $ \RR^2 \times \RR $ with datum having infinite kinetic energy, a gradient blowup may occur~\cite{CaCo2010}, but for datum on
 $\TT^2$ and with finite energy,  it is only known that arbitrarily large growth of high Sobolev norms  is possible from arbitrarily small 
initial datum~\cite{KiNa2012}. The collapsing hyperbolic saddle blowup-scenario from~\cite{CoMaTa1994} was ruled out analytically 
in~\cite{Co1998,CoFe2002}, and the modern numerical simulations of~\cite{CoLaShTsWu12} were able to resolve the equations past the initially predicted singular time~\cite{CoMaTa1994}. 
A different blowup scenario via a cascade
of filament instabilities of geometrically decreasing spatial and temporal scales was proposed in~\cite{Sc2011}. 
The first example of a  non-steady global in time smooth solution was obtained only very recently~\cite{CaCoGo2016}.

\subsection{SQG conservation laws}
Fundamental to our subsequent analysis is the fact that sufficiently smooth solutions of \eqref{eq:SQG-old} conserve the square of the
$\dot{H}^{- {1/2} }(  \mathbb{T} ^2 )$ norm of $\theta$.  Upon taking the $L^2$ inner product of \eqref{eq:SQG-old-a} with 
$\Lambda^{-1} \theta$,  integrating by parts in the nonlinear term, and using that $u\cdot \nabla \Lambda^{-1} \theta  
= \nabla^\perp \Lambda^{-1}\theta \cdot \nabla \Lambda^{-1} \theta = 0$, if follows that if $\theta$ is sufficiently smooth 
($\theta \in L^{3}_{t,x}(\TT^2 \times \RR)$ is sufficient, cf.~\cite{IsVi2015}),  then
\begin{align}
\HH(t) := \| \theta(\cdot,t)\|_{\dot{H}^{-1/2}(\TT^2)}^2 = \|\theta_0\|_{\dot{H}^{-1/2}(\TT^2)}^2
\label{eq:Hamiltonian:conserved}
\end{align}
for initial datum $\theta_0 \in \dot{H}^{- 1/2 }(\mathbb{T}^2)$. In fact, the $\dot{H}^{-1/2}$ norm of $\theta$ is the {\em Hamiltonian} $\HH$ associated to an action function (we systematically ignore the factor of $1/2$ that is usual present) from which the SQG equation may be derived via an Euler-Poincar\'e variational principle~\cite{Re1995}.

Additionally, due to the pure transport nature of \eqref{eq:SQG-old-a}, and the fact that the Lagrangian flow induced by the incompressible vector field $u$ preserves volume, sufficiently smooth solutions of the initial value problem for \eqref{eq:SQG-old} conserve the $L^p( \mathbb{T}^2)$ norms of $\theta$, so that
\begin{align}
\| \theta(\cdot,t)\|_{L^p(\TT^2)} = \|\theta_0\|_{L^p(\TT^2)}  
\label{eq:Casimir:conserved}
\end{align}
where $\theta_0 \in L^p(  \mathbb{T} ^2 )$ is the initial datum, and $1 \leq p \leq \infty$. When $p=2$, from elementary properties of the Riesz transform it follows that the {\em kinetic energy} is conserved  for smooth solutions $\| u(\cdot,t)\|_{L^2(\TT^2)} = \|u_0\|_{L^2(\TT^2)} $.\footnote{
In addition to \eqref{eq:Casimir:conserved}, writing the SQG equation in Lagrangian coordinates has further geometric advantages. For instance, in~\cite{CoViWu2015} it is shown that the solutions $\theta \in C^0_t C^{1,\alpha}_x$ have Lagrangian trajectories which are real-analytic functions of time.}

As noted in~\cite{Ta2014}, formally the conservation laws \eqref{eq:Hamiltonian:conserved}--\eqref{eq:Casimir:conserved} are immediate consequences of Noether's Theorem and the fact that the SQG equation belongs to a general class of active scalar equations satisfied by the vorticity of a generalized two-dimensional Euler equation on a Lie algebra with a specific inner product~\cite[Section 2.2]{Re1995} (see also~\cite{Ta2016,Wa2016,Con2016} for a more recent account). We address this point of view in more detail in Section~\ref{sec:geodesic} and Appendix~\ref{appendix:EP} below, where we also present the momentum equation for the incompressible velocity field $v$ whose vorticity is the function $\theta$ in~\eqref{eq:SQG-old}.

While \eqref{eq:SQG:3D:vorticity} suggests that the problem of finite-time singularities for SQG is similar to that for 3-D Euler, the 
aforementioned variational point-of-view justifies a direct analogy between the conservation laws for SQG and those for the 2-D Euler equations: 
\eqref{eq:Hamiltonian:conserved} plays the role of the conservation of kinetic energy in 2-D Euler, while \eqref{eq:Casimir:conserved} is 
analogous to the conservation of the Casimir functions in 2-D Euler. Therefore, we expect that a turbulent SQG solution exhibits a dual cascade 
of energy, as predicted by the Batchelor-Kraichnan theory for two dimensional Euler flows~\cite{Con1998,Con2002,CoTaVi2014}. Motivated by two-dimensional turbulence, we are thus naturally lead to consider weak solutions of the SQG equation.

\subsection{Weak solutions of the SQG equation are not unique}
Motivated by \eqref{eq:Casimir:conserved} with $p=2$ one may define $\theta \in L^2_{\rm loc}(\RR, L^2(\TT^2))$ to be a weak solution of \eqref{eq:SQG-old} if 
\begin{align}
\intint_{\RR\times \TT^2} \left( \theta \partial_t \phi + \theta u \cdot \nabla \phi \right) \, dx\, dt = 0
\label{eq:weak:L2}
\end{align}
holds for any smooth test function $\phi\in C^\infty(\TT^2 \times \RR)$, so that \eqref{eq:SQG-old} holds in the sense of distributions on 
$\TT^2 \times \RR$. Using this definition, it was 
established in \cite{Re1995} that for any $\theta_0$ in $L^2$, there exists a global-in-time weak solution to the Cauchy problem for 
\eqref{eq:SQG-old}, with $\theta \in L^\infty([0,\infty), L^2(\TT^2))$. See also ~\cite{PuVa2015} for the global existence of weak solutions to the 
3-D quasi-geostrophic system. 
 We note the stark contrast here with 3-D Euler, for which the existence of weak solutions for any $L^2$ initial datum remains open.

Moreover, we note that the proof of the  existence of global  weak solutions to SQG is quite different that the proof of global solutions to
 2-D Euler (for which
the vorticity is one derivative smoother than the velocity ~\cite{MaBe2002}); in particular, the proof of~\cite{Re1995} relies on a 
special structure of the nonlinear term in \eqref{eq:SQG-old}, which arises from the fact that the Fourier multiplier 
relating $\theta$ to $u$ is an odd function of the frequency. More precisely,  
\begin{align}
\int_{\TT^2} \theta u\cdot \nabla \phi dx = \int_{\TT^2} \theta \RSZ^\perp \theta \cdot \nabla \phi dx = - \int_{\TT^2} \theta \RSZ^\perp 
\cdot (\theta \nabla \phi) dx = - \int_{\TT^2} \theta u\cdot \nabla \phi dx - \int_{\TT^2} \theta \left[ \RSZ^\perp \cdot, \nabla \phi\right] \theta dx
\label{eq:SQG:magic}
\end{align}
for any smooth test function $\phi$. Here and throughout the paper,  we denote by $[A,B]$ the commutator of the operators $A$ and $B$. 
Since the commutator $\left[ \RSZ^\perp \cdot, \nabla \phi\right]$ is an operator of order $-1$, it maps $\dot{H}^{-1/2}$ into $\dot{H}^{1/2}$ 
(see Appendix~\ref{sec:commutator}), so that weak solutions to \eqref{eq:SQG-old} may be defined for distributions $\theta \in \dot{H}^{-1/2}$.
We thus have the following
\begin{definition}[Weak solution of SQG]
\label{def:weak:solution}
A distribution $\theta \in L^2_{\rm loc}(\RR;\dot{H}^{-1/2}(\TT^2))$ is a weak solution of \eqref{eq:SQG-old} if
\begin{align*}
\int_{\RR} \langle \RSZ_i^\perp  \theta, \partial_t \Lambda^{-1}  \phi^i  \rangle + \langle \RSZ_j^\perp \theta , \RSZ_i^\perp \Lambda^{-1} \theta \partial_j \phi^i \rangle - \frac 12 \langle \RSZ_i \RSZ_j^\perp \theta, [\Lambda , \phi^i] \RSZ_j^\perp \Lambda^{-1} \theta \rangle \, dt = 0
\end{align*}
for any $\phi \in C^\infty_0(\TT^2 \times \RR)$ such that $\div \phi = 0$, where $\langle \cdot , \cdot \rangle$ denotes the 
$\dot{H}^{-1/2}$-$\dot{H}^{1/2}$ duality pairing.
\end{definition}
See also Definition~\ref{def:weak:sol:momentum} for a more intuitive and equivalent formulation of Definition~\ref{def:weak:solution}. 
For $\theta \in L^2_{\rm loc}(\RR, L^2(\TT^2))$, Definition~\ref{def:weak:solution} agrees with that given via \eqref{eq:weak:L2}. The boundedness of the Calder\'on commutator $[\Lambda,\phi^i]$ on $\dot{H}^{1/2}$ is used implicitly in Definition~\ref{def:weak:solution} (cf.~Appendix~\ref{sec:commutator}). 

Using the cancellation property~\eqref{eq:SQG:magic}, it was further shown in~\cite{Ma2008} that for $\theta_0 \in L^p$, with $p>4/3$, there exists a global weak solution $\theta \in L^\infty(\RR,L^p(\TT^2))$ to the Cauchy problem for \eqref{eq:SQG-old}.

The question of uniqueness of these weak solutions  has remained,  to date,  open and has been isolated as a challenging open problem  
in~\cite[Problem 11]{DLSz2012} (see also~\cite{Ma2008,Ru2011,AzBe2015,Co2015}).\footnote{One may also consider another class of weak solutions, the so-called patch-solutions, or sharp-fronts~\cite{Ro2005,Ga2008}. 
These  solutions are given by $\theta = {\bf 1}_{\Omega(t)}$,  where $\Omega(t)$ is a compact, simply-connected domain, 
with smooth boundary,  evolving  with the fluid. Although  these weak solutions are not smooth as functions on $\RR^2$, 
$\theta \in L^\infty_{\rm loc}(\RR,L^\infty(\RR^2))$, the boundary $\partial \Omega (t)$ is smooth and thus  local-in-time existence and 
uniqueness of 
such solutions is known.  For such patch solutions,  an important question is if  $\partial\Omega(t)$ can self-intersect in finite time~\cite{CoFoMaRo2005,Sc2011,GaSt2014}.  See also~\cite{CoFeRodrigo2004,FeRo2011,FeRo2012} for the existence of 
almost-sharp-fronts.} 
One of our main results is  that {\em below a certain regularity threshold,  weak solutions to the SQG equation are not unique}, thereby answering the question posed in \cite{DLSz2012}. We state this result as the following
\def\ee{{\mathscr H}}
\begin{theorem}[Nonuniqueness of weak solutions to SQG]
\label{thm:main:inviscid}
Suppose $\ee:\RR\rightarrow \RR^+$ is a smooth function with compact support. Then for every $1/2<\beta<4/5$ and $\sigma< \beta/(2-\beta)$, there exists a weak solution $\theta$, with $\Lambda^{-1}\theta \in C_t^{\sigma}C_x^{\beta}$, satisfying
\begin{align*}
\HH(t) = \int_{\TT^2}\abs{\Lambda^{- \sfrac 12}\theta(x,t)}^2\,dx=\ee(t)\,.
\end{align*}
for $t \in \RR$. 
\end{theorem}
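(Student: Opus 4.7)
My plan is to prove Theorem~\ref{thm:main:inviscid} by a convex integration scheme of Nash--De Lellis--Sz\'ekelyhidi type, carried out at the level of the momentum formulation of SQG mentioned in the introduction --- namely the equation for the incompressible velocity field $v$ whose vorticity is $\theta$. The advantage of working with $v$ rather than with $\theta$ directly is that the momentum equation has a quadratic nonlinearity to which a symmetric Reynolds-type stress defect is naturally adapted. I would construct a sequence of smooth approximate solutions $(v_q, R_q)_{q \geq 0}$, indexed by frequencies $\lambda_q = a^{b^q}$, such that $v_q$ satisfies the momentum equation modulo $\div R_q$, with $\|R_q\|_{L^1_{t,x}} \to 0$ at a geometric rate in $q$. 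The limit $\theta := \lim_q \nabla^\perp \cdot v_q$ is then a weak solution in the sense of Definition~\ref{def:weak:solution}, and the claimed regularity $\Lambda^{-1}\theta \in C^\sigma_t C^\beta_x$ will follow from uniform $C^\sigma_t C^\beta_x$ bounds on a zero-order transform of $v_q$.

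The inductive step takes the form $v_{q+1} = v_q + w_{q+1}^p + w_{q+1}^c + w_{q+1}^t$, with $w_{q+1}^p$ a principal perturbation of the schematic form $\sum_k a_{k,q}(x,t)\, W_k(\lambda_{q+1} x)$ built from intermittent plane-wave or Mikado-type building blocks adapted to the nonlocal relation $u = \RSZ^\perp \theta$, and $w_{q+1}^c, w_{q+1}^t$ correctors enforcing solenoidality and cancelling transport errors. The amplitudes $a_{k,q}$ are selected via a Geometric Lemma so that the low-frequency part of $w_{q+1}^p \otimes w_{q+1}^p$ cancels $R_q$. To match the prescribed Hamiltonian profile, an overall amplitude factor is further tuned so that the leading-order contribution of $w_{q+1}$ to $\int_{\TT^2}|\Lambda^{-1/2}\theta|^2\,dx$ equals $\ee(t) - \HH_q(t)$ modulo errors vanishing in the limit. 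Since $\ee$ is compactly supported and we may initialize from $v_0 \equiv 0$, the full solution is kept supported in a fixed neighborhood of $\supp \ee$, so that $\HH(t) = \ee(t) = 0$ holds automatically off this set.

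The main obstacle, in my view, is the design of building blocks that are simultaneously (i) compatible with the nonlocal constraint $u = \RSZ^\perp \theta$, (ii) intermittent enough that their transport by $u_q$ produces a Reynolds stress that can be absorbed at the next step, and (iii) sharp in the $C^\beta$ norm of $\Lambda^{-1}\theta$. Balancing these three requirements, together with the cost of either a gluing-in-time procedure or of fast temporal oscillations used to control the transport error, is what fixes the thresholds $\beta < 4/5$ and $\sigma < \beta/(2-\beta)$; schematically, one needs $\|w_{q+1}\|_{C^\beta} \lesssim \delta_{q+1}^{1/2}$ with $\delta_{q+1} \sim \lambda_{q+1}^{-2\beta}$, while the transport error at the next stage imposes a timescale of order $\lambda_{q+1}^{-(2-\beta)}$. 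Finally, one must verify that the limit $\theta$ satisfies Definition~\ref{def:weak:solution}, which uses the Calder\'on commutator cancellation~\eqref{eq:SQG:magic} to make sense of the nonlinear term for distributions $\theta$ that only lie in $\dot{H}^{-1/2}$.
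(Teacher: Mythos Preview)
Your outline correctly identifies working at the momentum level and the general convex-integration architecture, but it glosses over the step that is the actual content of the proof. You write that ``the low-frequency part of $w_{q+1}^p \otimes w_{q+1}^p$ cancels $R_q$'' via a Geometric Lemma. For SQG this is not the right object: the momentum nonlinearity is $\Lambda w\cdot\nabla w-(\nabla w)^T\cdot\Lambda w$, not $\div(w\otimes w)$, and there is no a priori reason its low-frequency part is the divergence of a controllable symmetric $2$-tensor. Indeed, the paper's central innovation (Section~\ref{sec:osc}) is to rewrite the symmetrized high-high interaction $\TTT_{j,k}=\tfrac12\big((\RSZ\vartheta_{j,k})\vartheta_{j,-k}+\vartheta_{j,k}(\RSZ\vartheta_{j,-k})\big)$ as $\nabla\mathcal P_{j,k}+\div\mathcal Q_{j,k}$ by a bilinear pseudo-product/Coifman--Meyer argument, and then to compute that the principal symbol of $\mathcal Q_{j,k}$ equals $-\tfrac{\lambda_{q+1}}{2}\chi_j^2(k\otimes k)a_{k,j}^2$. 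Only after this Fourier-analytic reduction does the Geometric Lemma apply. Your proposal treats this as routine, but it is precisely the point where previous attempts on odd-multiplier active scalars had failed.

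Two smaller mismatches: (i) you invoke ``intermittent plane-wave or Mikado-type'' blocks, but Mikado flows are unavailable in $2$D (non-parallel lines intersect), and no intermittency is used here --- the paper takes Beltrami plane waves $b_k(\lambda_{q+1}\Phi_j)$ composed with the back-to-labels map of $u_q$, with frequencies growing only single-exponentially, $\lambda_q=\lambda_0^q$; (ii) there is no temporal corrector $w^t$ --- transport is handled by the flow map $\Phi_j$ and time cutoffs $\chi_j$ at scale $\tau_{q+1}$, and it is the balance $\tau_{q+1}^{-1}=\lambda_q\lambda_{q+1}\delta_q^{1/4}\delta_{q+1}^{1/4}$ between the transport and oscillation errors (estimates~\eqref{eq:sharp:1} and~\eqref{eq:sharp:2}) that fixes the threshold $\beta<4/5$.
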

Indeed, due to the compact support in time of the function $\theta$ in Theorem~\ref{thm:main:inviscid}, it follows that the trivial solution $\theta \equiv 0$ is not the only weak solution to \eqref{eq:SQG-old} which vanishes on the complement of a given time interval. 

Theorem~\ref{thm:main:inviscid} leaves open the question of whether the exponent $\beta$ can be taken arbitrarily close to $1$, which is the Onsager conjecture for the SQG equation (cf.~Conjecture~\ref{conj:Onsager} below).

The proof of Theorem~\ref{thm:main:inviscid} relies on a modification of the convex integration scheme employed by~\cite{DLSz2012b,DLSz2013,BuDLeIsSz2015} to study the Onsager conjecture for the 3-D Euler equations.\footnote{
See also~\cite{DLSz2012,DLSz2016} for  excellent review papers on the applicability of convex integration techniques in fluid dynamics, and connections to the $h$-principle.
}
It was suggested in~\cite{DLSz2012,Sh2011,IsVi2015} that the structure of the SQG nonlinearity is non-amenable to convex integration methods, because the multiplier relating $u$ to $\theta$ is an odd function of frequency. Herein,  we overcome this difficulty by rephrasing the equation in terms of a potential velocity $v$ (whose vorticity is the scalar $\theta$, see Section~\ref{sec:geodesic}), which allows us to apply Fourier analysis techniques to construct nontrivial high-high-low frequency interactions, crucial to the method of convex integration. We discuss these details in Section~\ref{sec:results} below.

\subsection{Weak solutions of the dissipative SQG equation are not unique}
Note that while weak solutions of the SQG equation may be defined for $\theta \in L^2_{t} \dot{H}^{-1/2}_x$, the existence of weak solutions obtained in~\cite{Re1995,Ma2008} requires an initial datum which is more regular (e.g. $\theta_0 \in L^p_x$ for $p>4/3$). One may thus ask a natural question: is it possible for that in a given (low) regularity regime one can both construct weak solutions via compactness arguments (viscosity solutions), and also construct weak solutions via convex integration? 

In order to answer this question in the positive, we consider the fractionally dissipative SQG system 
\begin{subequations} 
\label{eq:dissipative:SQG}
\begin{align}
&\partial_t \theta + u \cdot \nabla \theta  + \Lambda ^ \gamma \theta = 0 ,\\
&u = \RSZ^\perp \theta := \nabla^\perp \Lambda^{-1} \theta, 
\end{align}
\end{subequations} 
with $\gamma \in (0,2]$. 

Strong solutions of \eqref{eq:dissipative:SQG} have been considered extensively. 
The dissipative SQG equation has a 
natural scaling symmetry: if $\theta(x,t)$ is a $\TT^2$-periodic solution to the Cauchy problem for \eqref{eq:dissipative:SQG} with datum $\theta_0(x)$,  then $\theta_\lambda(x,t) = \lambda^{\gamma-1} \theta( \lambda x,\lambda^\gamma t)$ is a 
$\TT_\lambda^2=  [-\pi/\lambda,\pi/\lambda]^2$-periodic solution of \eqref{eq:dissipative:SQG} with initial datum $\theta_{0,\lambda}(x) = \lambda^{\gamma-1} \theta(\lambda x)$. In view of this scaling symmetry, the $L^\infty$ norm is scale invariant for 
$\gamma=1$.\footnote{For strong solutions, this is the strongest norm on which we have an a priori global in time bound: for any $\gamma>0$ 
we have that $\|\theta(t)\|_{L^\infty(\TT^2)}$ decays exponentially~\cite{CoCo2004,CoTaVi2015}. In fact, the argument in~\cite{CaVa2010} 
(see~\cite{CZVi2016}) shows that  $L^\infty_t L^2_x \cap L^2_t \dot{H}^{\gamma/2}_x$ {\em suitable weak solutions} (weak solutions 
which obey the local energy inequality) are bounded $L^\infty(\TT^2)$ for positive time. Thus, in the class of suitable weak solutions with
finite kinetic energy, the $L^\infty$ norm is the most important for a priori control.} \footnote{The Hamiltonian $\HH$ is scaling invariant for 
$\gamma =3/2$.} Therefore, for $\gamma>1$  \eqref{eq:dissipative:SQG} becomes semilinear and subcritical, and the global 
well-posedness of smooth solutions was established in~\cite{CoWu1999}. The critical case $ \gamma =1$ is a quasilinear problem, and the 
global regularity of smooth solutions for large initial datum was established in \cite{KiNaVo2007,CaVa2010}, with different proofs 
given in~\cite{KiNa2010, CoVi2012,CoTaVi2015}. For $\gamma<1$, the supercritical case, the question of finite-time singularities from 
smooth initial datum remains completely open, in analogy to the inviscid equations \eqref{eq:SQG-old}.\footnote{The finite-time blowup 
cannot occur for sufficiently smooth and small initial datum~\cite{CoCo2004, Wu2004, Mi2006, Ju2007}, and as $\gamma \to 1^-$ it also 
cannot happen for datum that is large, but not exceedingly large~\cite{CZVi2016}.} It is known, however,  that the global-in-time weak solutions 
 eventually become  smooth~\cite{Si2010,Da2011,Ki2011,CZVi2016}, which leads us to consider weak solutions of \eqref{eq:dissipative:SQG}.
In analogy to Definition~\ref{def:weak:solution}, we may define a weak solution to \eqref{eq:dissipative:SQG} as follows.
\begin{definition}[Weak solution of the dissipative SQG equation]
\label{def:dissipative:weak:solution}
The distribution $\theta \in L^2_{\rm loc}(\RR;\dot{H}^{-1/2}(\TT^2))$ is a weak solution of \eqref{eq:dissipative:SQG} if
\begin{align*}
\int_{\RR} \langle \RSZ_i^\perp \theta, \partial_t  \Lambda^{-1}  \phi^i  \rangle + \langle \RSZ_j^\perp \theta , \RSZ_i^\perp \Lambda^{-1} \theta \partial_j \phi^i \rangle - \frac 12 \langle \RSZ_i \RSZ_j^\perp \theta, [\Lambda , \phi^i] \RSZ_j^\perp \Lambda^{-1} \theta \rangle - \langle \RSZ_i^\perp  \theta, \Lambda^{\gamma-1} \phi^i  \rangle \, dt = 0
\end{align*}
for any $\phi \in C^\infty_0(\TT^2 \times \RR)$ such that $\div \phi = 0$, where $\langle \cdot , \cdot \rangle$ denotes the $\dot{H}^{-1/2}$-$\dot{H}^{1/2}$ duality pairing.
\end{definition}
Note that Definition~\ref{def:dissipative:weak:solution} does not require that solutions verify the  local energy inequality nor that they
possess the additional regularity
$\theta \in L^2_{\rm loc}(\RR;\dot{H}^{(\gamma-1)/2}(\TT^2))$; the weak solutions we consider need not be suitable weak solutions 
(see~\cite{CaVa2010}). In contrast to the  inviscid SQG equation, for the dissipative SQG equation it was shown in~\cite{Ma2008} 
that for any $\gamma>0$ and any $\theta_0 \in \dot{H}^{-1/2}$ there exists a  global-in-time weak solution $\theta$ 
to \eqref{eq:dissipative:SQG}.\footnote{This solution additionally obeys the energy inequality 
$\norm{\theta(t)}_{\dot{H}^{-1/2}}^2 + 2 \int_0^t \norm{\theta(s)}_{\dot{H}^{(\gamma-1)/2}}^2 ds \leq  \norm{\theta_0}_{\dot{H}^{-1/2}}^2$ for 
any $t\geq 0$.}   See also \cite{BaGr2015} for
the global existence of weak solutions when $\theta_0 \in L^{1+}$.

Using the convex integration scheme developed to prove Theorem~\ref{thm:main:inviscid}, we establish the nonuniqueness of weak solutions to the $\gamma$-dissipative SQG equation \eqref{eq:dissipative:SQG}, even for a range of  values for $ \gamma$ above $1$, the subcritical regime.

\begin{theorem}[Nonuniquess of weak solutions to dissipative SQG]
\label{thm:main:dissipative}
Suppose $\ee: \RR \rightarrow \RR^+$ is a smooth function with compact support. Then for every $1/2<\beta<4/5$, $0< \gamma < 2-\beta$ and $\sigma<\beta/(2-\beta)$, there exists a weak solution $\theta$, with $\Lambda^{-1} \theta \in C_t^{\sigma}C_x^{\beta}$, satisfying
\begin{align*}
\int_{\TT^2}\abs{\Lambda^{- \sfrac 12}\theta(x,t)}^2\,dx=\ee(t)\,
\end{align*}
for all $t \in \RR$.\footnote{The restriction $\gamma+\beta<2$ is sharp, in the sense that the $C^0_tC^\beta_x$ norm for $\Lambda^{-1} \theta$ is scale invariant precisely when $\gamma + \beta = 2$. We show here that for datum which is supercritical for the scaling of the equations, parabolic smoothing does not hold.}
\end{theorem}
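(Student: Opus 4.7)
The plan is to run the convex integration scheme developed for Theorem~\ref{thm:main:inviscid}, incorporating the fractional dissipation $\Lambda^\gamma \theta$ as an additional contribution to the stress error at each step. I would work in the momentum formulation of Section~\ref{sec:geodesic}, constructing sequences $(\theta_q, v_q, R_q)_{q\geq 0}$ with $\theta_q = \nabla^\perp \cdot v_q$ and $R_q$ the residual measuring the failure of $(\theta_q,v_q)$ to solve~\eqref{eq:dissipative:SQG} in an appropriate weak sense. I would use the same super-exponentially growing frequency parameters $\lambda_q = \lambda_0^{b^q}$ and amplitudes $\delta_q = \lambda_q^{-2\beta}$ as in the inviscid proof, so that $\Lambda^{-1}\theta_q$ is uniformly bounded in $C^\beta_x$ and the $R_q$ tend to zero.

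The only new element is treating $\Lambda^\gamma \theta_q$ as an additional source term in the residual equation at stage $q$, which must then be absorbed into $R_{q+1}$. Since $\theta_q$ is essentially frequency-localized at scale $\lambda_q$, a Littlewood--Paley argument shows that $\Lambda^\gamma \theta_q$ is comparable to $\lambda_q^\gamma \theta_q$, so that after applying the inverse-divergence operator used to define $R_{q+1}$ the resulting error is of the order of the scaling-invariant norm of $\Lambda^{-1}\theta_q$. A bookkeeping of powers of $\lambda_q$ then shows that this extra contribution is dominated by the leading terms already present in $R_{q+1}$ precisely when the $C^0_t C^\beta_x$ norm of $\Lambda^{-1}\theta$ is supercritical for the scaling symmetry $\theta_\lambda(x,t) = \lambda^{\gamma-1}\theta(\lambda x, \lambda^\gamma t)$ of~\eqref{eq:dissipative:SQG}, i.e.\ exactly when $\gamma + \beta < 2$, as anticipated in the footnote of the theorem.

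Concretely, I would carry out the following steps: (i) state an inductive proposition analogous to the main iterative step behind Theorem~\ref{thm:main:inviscid}, but with $\Lambda^\gamma \theta_q$ included in the definition of $R_q$; (ii) build the perturbation $\theta_{q+1}-\theta_q$ from the same oscillatory building blocks used in the inviscid construction, chosen so that their low--low frequency interactions cancel $R_q$; (iii) re-derive the transport, Nash, and oscillation error bounds for $R_{q+1}$ exactly as in Theorem~\ref{thm:main:inviscid}; (iv) bound the new dissipation-induced error by frequency localization; and (v) prescribe $\HH(t) = \ee(t)$ by tuning the $L^2$ amplitude of the perturbations in time, together with the same temporal cutoff used in the inviscid argument to produce compact support.

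The main obstacle is step (iv). Because $\Lambda^\gamma$ is nonlocal, one must verify that its action on sharply frequency-localized tiles at scale $\lambda_{q+1}$ does not spawn low-frequency tails that would spoil the inverse-divergence estimate defining $R_{q+1}$. I would handle this by splitting $\Lambda^\gamma$ via a Littlewood--Paley cutoff at scale $\lambda_{q+1}/2$: on the support of each tile the high-frequency piece acts as a smooth Fourier multiplier of order $\gamma$ and is amenable to stationary-phase bounds, while the low-frequency piece is rapidly decreasing in $\lambda_{q+1}$. With this localized estimate in hand, the bookkeeping sketched above closes the iteration under $\gamma + \beta < 2$, and the construction of the limit $\theta = \lim_q \theta_q$ together with the verification of the $C^\sigma_t C^\beta_x$ regularity of $\Lambda^{-1}\theta$ then proceeds verbatim as in the proof of Theorem~\ref{thm:main:inviscid}.
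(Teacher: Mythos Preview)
Your proposal is essentially the paper's approach: the authors prove a single momentum-form theorem (Theorem~\ref{thm:main}) covering both $\gamma=0$ and $0<\gamma<2-\beta$, by including $\Lambda^\gamma v_q$ in the relaxed equation~\eqref{eq:relaxed-SQG} and adding a dissipation stress $R_D=\BB\Lambda^\gamma w_{q+1}$ to the decomposition~\eqref{eq:R:new:split} alongside the transport, Nash, and oscillation errors.

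Two small corrections. First, the paper uses the simpler geometric growth $\lambda_q=\lambda_0^q$, not a super-exponential $\lambda_0^{b^q}$. Second, your step~(iv) is overengineered: the perturbation $w_{q+1}$ is constructed with \emph{exact} compact Fourier support in the annulus $\{\lambda_{q+1}/2\leq|\xi|\leq 2\lambda_{q+1}\}$ (via the projector $\PP_{q+1,k}$), so there are no low-frequency tails to worry about. The dissipation error is then the easiest of the four: by Bernstein,
\[
\|R_D\|_{C^0}=\|\BB\Lambda^\gamma \tilde P_{\approx\lambda_{q+1}} w_{q+1}\|_{C^0}\lesssim \lambda_{q+1}^{\gamma-1}\delta_{q+1}^{1/2},
\]
and this is $\ll \lambda_{q+2}\delta_{q+2}$ precisely when $\gamma+\beta<2$. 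Note also that the relevant object is $\Lambda^\gamma w_{q+1}$ at frequency $\lambda_{q+1}$, not $\Lambda^\gamma\theta_q$ at frequency $\lambda_q$; the dissipative term on the old iterate is already absorbed in~\eqref{eq:relaxed-SQG} at level~$q$.
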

Therefore, for the dissipative SQG equation, convex integration can coexist with weak compactness. This flexibility of the 
PDE~\eqref{eq:dissipative:SQG} is both due to the low regularity of the weak solution, and that enforcement of the local energy inequality is not
required. To the best of our knowledge, this is the first instance when the  convex integration scheme can be employed  
for an evolution equation arising in fluid dynamics, which is parabolic, and even semi-linear. The main ideas used in the proof of Theorem~\ref{thm:main:dissipative} are discussed in Section~\ref{sec:results}.

Note that as $\beta \to 1^{-}$, Theorem~\ref{thm:main:dissipative} holds with $\gamma \to 1^{+}$. This motivates Conjecture~\ref{conj:critical:SQG} below, which states that for the critical SQG equation ($\gamma=1$) we have a dichotomy of regularity exponents, whereby for $\beta\geq 1$ the energy equality holds, the uniqueness and global regularity of solutions holds; while for $\beta<1$ the uniqueness of weak solutions breaks down and the equation becomes flexible, i.e. amenable to convex integration constructions.

\subsection{Hydrodynamical systems as geodesic equations}
\label{sec:geodesic}

At least since the work of Poincar\'{e} \cite{Po1901}, it has been
well-known that the equations of motion of the finite-dimensional mechanical systems governed by Newtonian mechanics can be interpreted as the geodesic equations of a Riemannian metric on configuration space  or Lie group (see, for example, \cite{Br1946}).  The motion of
a rigid body, for example, is governed by a left-invariant Riemannian metric on the Lie group $SO(3)$ \cite{MaRa1999, KhAr1999}.  In his
seminal paper, \cite{Ar1966} showed that infinite-dimensional hydrodynamical systems could also be represented by geodesic equations
on the infinite-dimensional group of volume preserving diffeomorphisms $ \mathcal{D} _\mu$.  Specifically, Arnold proved that the 
incompressible Euler equations are geodesics with the respect to the $L^2$-right invariant metric on the Lie group $ \mathcal{D} _\mu$.   The
Euler-Poincar\'{e} variational principle \cite{MaRa1999, KhAr1999} asserts that such hydrodynamical  geodesic equations can be computed
for a rather general metric specified on the associated  Lie algebra $ \mathcal{V} $, the space of divergence-free vector fields, with Lie bracket
given by $[u,w] = \partial_j u w^j - \partial _j w u^j$.

For a positive-definite, self-adjoint operator $A$, we define the metric on $ \mathcal{V} $ by
\begin{equation}\label{metric}
(u, w) = \int_{ \mathbb{T}  ^2 } Au \cdot  w \, dx \,.
\end{equation} 
The metric \eqref{metric} is then right-translated over the Lie group $ \mathcal{D} _\mu$.  As we shall explain Appendix~\ref{appendix:EP}, the geodesic equations
associated to this metric are extrema of the action function
\begin{align}
\label{eq:action:*}
s(u) = \int_{\RR} \int_{\TT^2} A u \cdot u \, dx dt
\end{align}
for incompressible Lie-advected variations $\delta u$ that obey suitable boundary conditions, which  gives rise to the following hydrodynamical system:
\begin{subequations}
\label{eq:hydro}
\begin{align}
\partial_t v + u \cdot \nabla v + ( \nabla u)^T\cdot v & = - \nabla \tilde p,  \\
\div u & = 0, \\
v & = A u  \,.
\end{align}
\end{subequations}
Here $\tilde p$ denotes the pressure function,  a Lagrange multiplier enforcing the incompressibility of $u$.  When the operator $A$ is
the identity matrix, then  \eqref{eq:hydro} is the incompressible Euler equations (with pressure function $p = \tilde p + {\frac{1}{2}} |u|^2$); however,
the operator $A$ can be
differential operator, a nonlocal Fourier multiplier, or even a more general operator satisfying the positivity and symmetry conditions noted above.\footnote{When $A= (1- \alpha ^2 \Delta )$, $ \alpha >0$, the metric \eqref{metric} on $ \mathcal{V} $ is equivalent to the $H^1$-metric, and
 the system \eqref{eq:hydro} corresponds the well-studied Lagrangian Averaged Euler or  Euler-$ \alpha $ equations \cite{HoMaRa1998}.   More generally, if $A= (1- \Delta )^s$, $s \in \mathbb{R}  $, then \eqref{metric} is an
$H^s$ metric on $ \mathcal{V} $,  and
 it was shown in \cite[Section 3.3]{MaRaSh2000} that the vorticity $ \omega = \nabla ^\perp \cdot u$ satisfies
$\partial_t  (1- \Delta )^s \omega + u \cdot \nabla (1- \Delta )^s \omega =0$.}~\footnote{Letting $A$  be a specially chosen order $-2$ operator which is self-adjoint and positive-definite, \cite{Ta2016} proved that 
\eqref{eq:hydro} admits locally in time smooth solutions which blow up in finite time.}

As observed in~\cite{Re1995} (see also~\cite{Ta2016,Wa2016,Con2016}), if $A= \Lambda ^{-1} $, then \eqref{metric} is the $\dot{H}^{-1/2}$ metric on 
$ \mathcal{V} $, and it follows from \eqref{eq:hydro} that $\omega = \nabla^\perp \cdot u$ obeys
\[
\partial_t  \Lambda ^{-1}  \omega + u \cdot  \Lambda^{-1}  \omega =0  \,.
\]
A simple computation (see Section~\ref{sec:SQG:momentum} below) shows that $\theta = - \Lambda^{-1} \omega$ solves the SQG equation~\eqref{eq:SQG-old}.

\subsection{The Onsager conjecture and the uniqueness of weak solutions}
As discussed in Section~\ref{sec:geodesic},  solutions to the SQG equation \eqref{eq:SQG-old} are  extrema of the action function $s(u)$ in \eqref{eq:action:*}, with $A = \Lambda^{-1}$. Since $s(u)$ does not explicitly depend on $t$, for any such system the corresponding Hamiltonian 
\begin{align}
\HH(t) = \int_{\TT^2} \Lambda^{-1} u \cdot u \, dx = \int_{\TT^2} \Lambda^{-1} \theta \, \theta \, dx
\label{eq:Hamiltonian:2}
\end{align}
is formally conserved (see~\eqref{eq:Hamiltonian:conserved}).
Inspired by the Onsager conjecture for the incompressible  Euler equations (see the discussion in Section~\ref{sec:results} below), a 
fundamental question arises for the SQG equations: do weak solutions of \eqref{eq:SQG-old} conserve the Hamiltonian $\HH(t)$?  One may conjecture the following dichotomy:
\begin{conjecture}[Onsager conjecture for SQG]
\label{conj:Onsager}
Let $v = \Lambda^{-1} u = \Lambda^{-1} \RSZ^\perp \theta$. Define $\alpha_O = 1$.\footnote{Here the subindex $O$ of $\alpha_O$ stands for Onsager, as was suggested in~\cite{Kl2016}.}
\begin{itemize}
\item[\bf{(a)}] If $v  \in C(\RR;C^\alpha (\TT^2))$ is a weak solution of the SQG equation, with $\alpha > \alpha_O$, then  \eqref{eq:Hamiltonian:conserved} holds on $[0,T]$.
\item[\bf{(b)}] For any $1/2<\alpha <\alpha_O$, there exist infinitely many weak solutions of the SQG equation, with $v \in C ( \RR ; C^\alpha(\TT^2))$, such that \eqref{eq:Hamiltonian:conserved} fails.
\end{itemize}
\end{conjecture}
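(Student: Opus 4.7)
My plan for Conjecture~\ref{conj:Onsager} splits along its two parts: part (a) should follow a Constantin-E-Titi style mollification argument, while part (b) calls for a substantial refinement of the convex-integration scheme underlying Theorem~\ref{thm:main:inviscid}.

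For part (a), I would mollify the SQG equation at scale $\eps$, writing
\begin{align*}
\partial_t \theta_\eps + \div(u_\eps \theta_\eps) = -\div R_\eps, \qquad R_\eps = (u\theta)_\eps - u_\eps \theta_\eps,
\end{align*}
and then test against $\Lambda^{-1}\theta_\eps$. The transport contribution vanishes identically because $u_\eps = \nabla^\perp \Lambda^{-1}\theta_\eps$ is orthogonal to $\nabla\Lambda^{-1}\theta_\eps$, leaving
\begin{align*}
\tfrac{1}{2}\tfrac{d}{dt}\|\theta_\eps\|_{\dot{H}^{-1/2}}^2 = \int_{\TT^2} R_\eps \cdot \nabla \Lambda^{-1}\theta_\eps \, dx.
\end{align*}
For $v\in C^\alpha$ with non-integer $\alpha$, both $\theta = \nabla^\perp\cdot v$ and $u = \RSZ^\perp\theta$ lie in $C^{\alpha-1}$, so the symmetric Constantin-E-Titi decomposition
\begin{align*}
R_\eps(x) = \int_{\TT^2} \rho_\eps(x-y)\bigl(u(y)-u(x)\bigr)\bigl(\theta(y)-\theta(x)\bigr)\,dy - (u_\eps-u)(\theta_\eps-\theta)
\end{align*}
yields $\|R_\eps\|_{L^\infty} \lesssim \eps^{2(\alpha-1)} \|v\|_{C^\alpha}^2$. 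Since $\nabla\Lambda^{-1}\theta_\eps = \pm u_\eps^\perp$ is uniformly bounded, the right-hand side vanishes as $\eps\to 0$ precisely when $\alpha > 1 = \alpha_O$, which establishes conservation.

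For part (b), Theorem~\ref{thm:main:inviscid} already produces nonconservative weak solutions with $v \in C_x^\beta$ for $\beta < 4/5$. To push $\beta$ arbitrarily close to $1$, the plan is to import the refinements that upgraded the Euler-Onsager scheme from $1/5$ to $1/3$: a temporal gluing procedure following Isett, replacing the sharp inductive cutoffs by pieces of smooth SQG evolutions glued across short time intervals so that only a truly high-frequency portion of the Reynolds stress must be cancelled; a Newton-type pre-iteration that resolves the slowly varying part of the stress before the oscillatory correction is performed; and anisotropic intermittent building blocks, replacing the Mikado-style profiles used to reach $4/5$, with the intermittency dimension tuned to compensate for the derivative cost of $u = \Lambda v$. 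All estimates should be performed on the momentum variable $v$ from Section~\ref{sec:geodesic} rather than on $\theta$.

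The decisive obstruction is in part (b) and stems exactly from this derivative loss: a perturbation $v_{q+1}$ oscillating at spatial frequency $\lambda_{q+1}$ contributes to the Reynolds stress with an amplitude enlarged by a factor of $\lambda_{q+1}$ relative to the analogous Euler correction, and it is precisely this enlargement that currently bars the scheme from crossing $\beta = 4/5$. Closing the gap likely requires a genuinely new ingredient---perhaps a higher-order cancellation between the $\nabla v$ and $\Lambda v$ factors in the stress, or a change of unknown that eliminates the derivative loss altogether. Part (a), by contrast, should be essentially mechanical once the equation is in the form above, modulo the standard technical point that the Riesz transforms and $\Lambda$ require non-integer H\"older exponents to preserve $C^\alpha$.
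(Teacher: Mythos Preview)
The statement is a \emph{conjecture}; the paper does not prove it in full. Part (a) is attributed to \cite{IsVi2015} (there under the weaker hypothesis $\theta\in L^3_{t,x}$), and part (b) is established only for $\alpha<4/5$ via Theorem~\ref{thm:main:inviscid}, with the range $\alpha\in[4/5,1)$ explicitly left open. There is thus no proof in the paper of the full statement to compare against.

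Your outline for part (a) is the Constantin--E--Titi mollification argument and matches in spirit what \cite{IsVi2015} does; the paper cites that result rather than reproving it, so here you are consistent with the paper's account.

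For part (b) there is a factual slip and a genuine gap. The slip: the $4/5$ threshold in this paper is obtained using Beltrami plane waves (Section~\ref{sec:Beltrami_plane_waves}), not ``Mikado-style profiles''. The gap: the tools you propose to import from the 3D Euler program---in particular intermittent or Mikado-type building blocks---are exactly what the paper identifies as unavailable in two dimensions. Section~\ref{sec:results} notes that Mikado flows rely on disjoint supports along thin tubes, which is impossible in 2D since non-parallel lines must intersect, and records that finding a 2D substitute ``is an interesting open problem''; this is also why the 2D Euler Onsager problem is stuck at $1/5$. Gluing and a Newton-type pre-iteration are not obviously obstructed in 2D, but on their own they do not close the exponent gap without improved building blocks. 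Your final paragraph correctly concedes that a genuinely new ingredient is needed; the point is that your concrete list does not yet contain one, and the paper already explains why the most natural candidates fail.
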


The rigid side (a) of this conjecture was resolved in~\cite{IsVi2015}, following the classical work of~\cite{CoETi1994}, by proving that $\theta \in L^{3}(\RR ; L^3(\TT^2))$ implies the conservation of $\HH$.  In this paper we address the flexible side (b) of the conjecture, and prove in~Theorem~\ref{thm:main:inviscid}, that (b) holds for $\alpha < 4/5$.  The regularity gap,  $\alpha \in [4/5,1)$,  for nonconservative weak 
solutions  of SQG remains for the  same reason that the Onsager conjecture for the 2-D Euler equations remains open, with an open range of values for
$ \alpha $ in  $[1/5,1/3)$.\footnote{Indeed, the construction in~\cite{BuDLeIsSz2015}, combined with ideas from~\cite{Ch2013,ChSz2014}, or 
from this paper, shows that $h$-principles are available for 2-D Euler with velocity fields in $C_t^\alpha C^\alpha_x$, for any $\alpha < 1/5$. 
The recent construction in~\cite{Is2016} does not apply in two dimensions, since any two non-parallel infinite lines on the plane intersect.} This issue is discussed further in Section~\ref{sec:results} below. Proving part (b) of Conjecture~\ref{conj:Onsager} for any $\alpha \in [4/5,1)$ appears to be challenging.

\begin{remark}[Onsager conjecture for extrema of a norm-inducing action functional]
\label{rem:Onsager}
The Euler and SQG equations are particular cases of equations obeyed by extrema of the action functional $s(u)$ in \eqref{eq:action:*}. For Euler $A = \Id$, while for SQG, $A=\Lambda^{-1}$. In general, let $A$ be any positive-definite self-adjoint operator which is translation invariant and 
acts on scalar periodic functions with zero mean, such that $\norm{A w}_{L^2(\TT^2)} \approx \norm{w}_{\dot{H}^a(\TT^2)}$ for some 
$a \in \RR$. Let $u$ be an extremum of the corresponding action functional, such that $v = Au$ obeys the hydrodynamical equation 
\eqref{eq:hydro}.
In analogy with Conjecture~\ref{conj:Onsager}, it is natural to determine the Onsager exponent $\alpha_O$, such that solutions with regularity 
above $\alpha_O$ conserve the Hamiltonian $\HH = \int_{\TT^2} u \cdot v dx$, while solutions with regularity below $\alpha_O$ do not.
Upon rewriting the nonlinear term in \eqref{eq:hydro} as $u \cdot \nabla v - (\nabla v)^T \cdot u = u^\perp (\nabla^\perp \cdot A u)$, taking an inner product with a mollified version of $u$ and integrating over $\TT^2$, an argument similar to~\cite{CoETi1994} shows that for $v = A u \in L^3_t C^{\alpha}_x$, with $\alpha > \alpha_O =: -a + (1+a)/3$, the Hamiltonian is conserved.    If indeed this choice of $\alpha_O$ determines
an Onsager dichotomy remains to be shown.
\end{remark}

\begin{remark}[An $L^3_{t,x}$ based Banach scale]
Naturally, the value of the Onsager exponent $\alpha_O$ discussed in Conjecture~\ref{conj:Onsager} and Remark~\ref{rem:Onsager} depends on the precise Banach scale $X^\alpha$ considered. Above, we have only mentioned the scale of H\"older spaces $X^\alpha = C_t C^\alpha_x$. On the other hand the Hamiltonian $\HH$ is quadratic in $u$, and the nonlinear term in \eqref{eq:hydro} is also quadratic in $u$, so that proving the conservation in time of $\HH$ for the Euler~\cite{CoETi1994,ChCoFrSh2008} and SQG equations~\cite{IsVi2015} only requires control of the solution in the  Banach scale $X^\alpha = L^3_t B^{\alpha}_{3,\infty}$, with $\alpha > \alpha_O$. Thus, Conjecture~\ref{conj:Onsager} may be alternatively posed on this $L^3$-based Banach scale, without changing the value of $\alpha_O$. It is however conceivable that for an Onsager regularity threshold $\alpha_O$ defined in terms of an $L^2$-based Banach scale, such as $X^\alpha = L^2_t \dot{H}^\alpha_x$, the sharp value may be different from the one discussed in Remark~\ref{rem:Onsager}, which is computed in terms of $L^\infty_{t,x}$ or $L^3_{t,x}$. 
\end{remark} 
 
\begin{remark}[Other important threshold exponents] 
\label{rem:Klainerman}
In a recent survey article on the work of J.~Nash~\cite{Kl2016},  other {\em threshold exponents} are discussed
for which a dichotomy in the behavior of solutions holds, depending on whether the regularity index of the weak solution is greater than or less
than this exponent. 
For simplicity, fix the Banach scale $X^\alpha = C_t C^\alpha_x$.  In analogy to the {\em Onsager exponent} $\alpha_O$, we define
the following important regularity exponents: the
 {\em Nash exponent} $\alpha_N$ determines whether the nonlinear evolution is flexible or rigid (in the sense that $h$-principles are available); 
 the {\em uniqueness exponent} $\alpha_U$ determines the uniqueness of solutions; 
 the {\em well-posedness exponent} $\alpha_{WP}$ determines the local well-posedness of the system; 
 and the {\em scaling exponent} $\alpha_*$ which determines the space $X^{\alpha_*}$ whose norm is invariant under the natural 
 scaling symmetries of the equation (see Page 11 in~\cite{Kl2016}). 
 For instance, in the case of the Euler equations with  the H\"older scale $ C_t C^\alpha_x$ for the velocity field $u$, we have that 
 $\alpha_{WP} = 1$ (cf.~\cite{Ho1933,BaTi2010,ElMa2014,BoLi2015}), $\alpha_U$ is also conjectured to be equal to $1$ (only $\alpha_U \leq 1$ is known), $\alpha_O = 1/3$ (cf.~\cite{CoETi1994,ChCoFrSh2008,Is2016}), and $\alpha_* = 0$, and $\alpha_O \leq \alpha_N \leq \alpha_U$ (since the convex integration constructions also prove $h$-principles and nonuniqueness). We note that these exponents are not the same,
 and  one expects them to be linearly ordered $\alpha_* \leq \alpha_O \leq \alpha_N \leq \alpha_U \leq \alpha_{WP}$ (cf.~\cite[Equation (0.7)]{Kl2016}). 
\end{remark}

For the inviscid SQG equation, on the H\"older scale $ C_t C^\alpha_x$ for the {\em potential velocity field} $v = \Lambda^{-1} u$, this gap between the exponents remains, and one may conjecture that $\alpha_O = 1$, while $\alpha_{WP} = 2$. However, in view of Theorem~\ref{thm:main:dissipative}, for the dissipative SQG equation with $ \gamma =1$ (informally called the {\em critical SQG equation}), one may conjecture that all the exponents discussed in~Remark~\ref{rem:Klainerman} are the same, thereby justifying the adjective ``critical''. 

\begin{conjecture}[Exponents for critical SQG]
\label{conj:critical:SQG}
Consider the dissipative SQG equation~\eqref{eq:dissipative:SQG} with $\gamma=1$, and fix the Banach scale $X^\alpha = C_t C^\alpha_x$ as a way to measure the regularity of the potential velocity $v = \Lambda^{-1} u = \Lambda^{-1} \RSZ^\perp \theta$. Then $1 = \alpha_* = \alpha_O  = \alpha_U = \alpha_{WP}$.
\end{conjecture}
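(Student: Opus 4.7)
The plan is to prove the equality $1 = \alpha_* = \alpha_O = \alpha_U = \alpha_{WP}$ by establishing, for each of the three nontrivial exponents, a rigid upper bound $\leq 1$ and a flexible lower bound $\geq 1$. The scaling exponent is a direct verification: for $\gamma = 1$ the equation~\eqref{eq:dissipative:SQG} is invariant under $\theta_\lambda(x,t) = \theta(\lambda x, \lambda t)$, so the potential velocity transforms as $v_\lambda(x,t) = \lambda^{-1} v(\lambda x, \lambda t)$ and hence $[v_\lambda]_{C^1_x} = [v]_{C^1_x}$, giving $\alpha_* = 1$.

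For the rigid side $(\alpha > 1)$, the plan is to bundle all three remaining upper bounds into a single argument. If $v \in C_t C^\alpha_x$ with $\alpha > 1$, then $\theta \in C_t C^{\alpha-1}_x \hookrightarrow C_t L^\infty_x$. The Caffarelli--Vasseur regularization theorem~\cite{CaVa2010}, together with the De Giorgi--Nash--type arguments of~\cite{KiNaVo2007,CoVi2012,CoTaVi2015}, implies that any weak solution of critical SQG with $\theta \in L^\infty_t L^\infty_x$ is in fact $C^\infty$ for positive times. This yields local well-posedness and uniqueness in this H\"older class, and in particular the energy equality $\tfrac{d}{dt} \HH(t) + 2 \|\theta(\cdot,t)\|_{L^2(\TT^2)}^2 = 0$ holds. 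Hence $\alpha_O, \alpha_U, \alpha_{WP} \leq 1$ all at once.

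For the flexible side $(\alpha < 1)$, Theorem~\ref{thm:main:dissipative} applied with $\gamma = 1$ already produces, for every $\beta < 4/5$, infinitely many distinct weak solutions $\theta$ with $\Lambda^{-1} \theta \in C_t^\sigma C_x^\beta$ realizing any prescribed compactly supported Hamiltonian profile. This gives $\alpha_O, \alpha_U \geq 4/5$ immediately. Closing the remaining gap $[4/5, 1)$ requires upgrading the convex integration scheme underpinning Theorem~\ref{thm:main:dissipative} so that the building blocks for the potential velocity $v$ attain spatial H\"older regularity arbitrarily close to the scaling exponent.

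The principal obstacle is precisely this upgrade, which is the two-dimensional analogue of the open part of the Onsager conjecture for 2D Euler and inherits exactly the geometric difficulty flagged in the footnote to Conjecture~\ref{conj:Onsager}: in $\RR^2$ any two nonparallel straight lines must intersect, so the Mikado-flow and intermittent-jet constructions that powered the 3D resolution~\cite{Is2016,BuDLeIsSz2015} have no direct two-dimensional counterpart. I would attempt to circumvent this by exploiting two features available only at $\gamma = 1$: temporal intermittency calibrated to the parabolic timescale of $\Lambda$, so that essentially transversal planar building blocks can be supported on disjoint short time windows; and curved thin-strip blocks whose forced self-intersections generate low-frequency quadratic errors that can be absorbed by an explicit corrector built in the momentum formulation~\eqref{eq:hydro} with $A = \Lambda^{-1}$. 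Even attaining any $\beta > 4/5$ would be a substantial advance, and pushing to $\beta = 1^{-}$ to obtain the full conjecture appears to require a genuinely new 2D construction; this is what I expect to be the hard part.
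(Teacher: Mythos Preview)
Your assessment is accurate and matches the paper's own discussion precisely: this statement is a \emph{conjecture}, and the paper does not prove it. Immediately after stating Conjecture~\ref{conj:critical:SQG}, the paper records exactly the partial results you list---$\alpha_* = 1$ by scaling, $\alpha_{WP}, \alpha_U \leq 1$ from~\cite{CaVa2010}, and $\alpha_O, \alpha_U \geq 4/5$ from Theorem~\ref{thm:main:dissipative}---and then states that ``establishing the remaining inequalities in Conjecture~\ref{conj:critical:SQG} remains open.'' Your identification of the gap $[4/5,1)$ as the obstruction, and of the two-dimensional Mikado problem as its source, agrees with the paper's own diagnosis (cf.\ the footnote following Conjecture~\ref{conj:Onsager}).

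The speculative strategies you propose for closing the gap (temporal intermittency tuned to the parabolic scaling at $\gamma=1$, curved thin-strip building blocks with absorbed self-intersection errors) go beyond anything in the paper, which simply leaves the problem open. These are reasonable directions, but you should be clear that they constitute a research program rather than a proof; in particular, no known convex integration construction in two dimensions reaches the Onsager-critical exponent, and the conjecture remains open.
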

That $\alpha_*=1$ follows from the fact that the $L^\infty$ norm is scaling invariant. 
The fact that $\alpha_{WP}, \alpha_{U} \leq 1$ follows e.g. from~\cite{CaVa2010}, while Theorem~\ref{thm:main:dissipative} shows that 
$\alpha_O, \alpha_U \geq 4/5$. Establishing the remaining inequalities in Conjecture~\ref{conj:critical:SQG} remains open. The above conjecture 
provides the first nonlinear hydrodynamical PDE for which the exponents of Remark~\ref{rem:Klainerman} are all the same.

\section{Outline of the proof}
 
\subsection{The SQG momentum equation}  
\label{sec:SQG:momentum}
We shall make use of two velocity fields to describe the SQG equations: 
we define the {\em potential velocity}
\begin{align} \label{eq:pv}
v = \Lambda^{-1} u  \,,
\end{align}
which is thus one derivative smoother than the SQG {\em transport velocity} $u = \RSZ^\perp \theta$.  
From \eqref{eq:hydro}, it follows that
the potential velocity $v$ satisfies
\begin{subequations}
\label{eq:SQG}
\begin{align}
\partial_t v + u \cdot \nabla v - ( \nabla v)^T\cdot u & = - \nabla p,  \label{eq:SQG-a} \\
\div v & = 0, \label{eq:SQG-b} \\
u & = \Lambda v \label{eq:SQG-c} \,,
\end{align}
\end{subequations}
where $p = \tilde p + u \cdot v$.
The SQG momentum equation \eqref{eq:SQG-a} can be equivalently written as\footnote{
There is yet another form of the SQG equations which should play an important role in the analysis of smooth solutions, which we write as
\[
\p_t u + (u\cdot \nabla )u -\Lambda \left( [ \Lambda^{-1} , u^\perp]\, \nabla ^\perp\cdot u \right) =  -\nabla P  \,, \qquad 
 \operatorname{div} u = 0 \,.
\] 
This form of SQG is written as a zeroth-order perturbation of the incompressible Euler equations for 
sufficiently smooth vector-fields $u$.}
$$
\p_t v + u^\perp \, ( \nabla ^\perp \cdot v) = -\nabla p \,.
$$
Upon defining the temperature function $\theta$ as minus the vorticity of the potential velocity:
$$
\theta = -  \nabla ^\perp \cdot v \,,
$$
\eqref{eq:SQG-a} becomes
\begin{align}\label{eq:SQG-p-theta}
\partial_t v - \theta u^\perp = - \nabla p.
\end{align}
A direct computation confirms that $\theta$ is indeed a solution of (\ref{eq:SQG-old}); 
taking the scalar product of $\nabla ^\perp$ with \eqref{eq:SQG-p-theta}, we find that
$\partial_t (-\theta) - \nabla^\perp \cdot (\theta u^\perp) = 0$ and hence $\partial_t \theta + u \cdot \nabla \theta = 0$, 
since $\nabla^\perp \cdot u^\perp = - \nabla \cdot u = 0$. Note that the dissipative SQG equation \eqref{eq:dissipative:SQG} also can be written as in \eqref{eq:SQG}, by adding $\Lambda^\gamma v$ to the right side of \eqref{eq:SQG-a}.

As we are primarily interested in  weak solutions of SQG, we shall need some basic commutator identities.    For 
 all test functions $\phi \in C^ \infty ( \mathbb{T}  ^2)$, we have that
\begin{align*} 
-  \int_{\TT^2} \partial_i v^j \Lambda v^j \partial_i \phi  dx & = \int_{\TT} \left( v^j  \Lambda \partial_i v^j \partial_i \phi + v \cdot  \Lambda v  \Delta \phi\right)\, dx \,,
\end{align*} 
and thus,
\begin{align*} 
-  \int_{\TT^2} \partial_i v^j \Lambda v^j \partial_i \phi  dx & = \frac 12 \int_{\TT^2} \partial_i v^j \left[ \Lambda, \partial_i \phi \right] v^j\, dx \, + \frac 12 \int_{\TT^2} v \cdot \Lambda v \Delta \phi dx.
\end{align*} 
This motivates a  convenient and equivalent definition of a  weak solution, which is clearly equivalent to Definition~\ref{def:weak:solution} above.
\begin{definition}[Weak solution of SQG, momentum form]
\label{def:weak:sol:momentum}
We say that $v \in L^2_{\rm loc}(\RR;H^{1/2}(\TT^2))$ is a  weak solution of \eqref{eq:SQG} if
\begin{align*}
\int_{\RR} \langle v^i , \partial_t \phi^i  \rangle + \langle \Lambda v^j , v^i \partial_j \phi^i \rangle - \frac 12 \langle \partial_i v^j , [\Lambda , \phi^i] v^j \rangle \, dt = 0
\end{align*}
holds for any $\phi \in C_0^\infty(\RR\times \TT^2)$ such that $\div \phi = 0$. Here $\langle \cdot , \cdot \rangle$ denotes the $\dot{H}^{-1/2}$-$\dot{H}^{1/2}$ duality pairing.  
\end{definition}
Note that for smooth $\phi$ the operator $[\Lambda , \phi^i]$ is a zeroth order operator on $v$ (cf.~Lemma~\ref{lem:Calderon} below). Moreover, adding the term $\int_{\RR} \langle v^i, \Lambda^\gamma \phi^i\rangle \, dt$ to Definition~\ref{def:weak:sol:momentum} gives an equivalent form of Definition~\ref{def:dissipative:weak:solution}.

\begin{remark}[Pressure]
By taking the divergence of \eqref{eq:SQG-a} and using \eqref{eq:SQG-b}, we obtain that  $p$ solves
\begin{equation}\label{eq:pressure}
- \Delta p = \text{Tr}\left( \nabla v \ \nabla u -  \nabla v ^T \ \nabla u  \right) - \Delta v \cdot u \,.
\end{equation} 
For weak solutions, we must interpret $p$ as a distribution, and
the elliptic equation \eqref{eq:pressure} has the following distributional formulation:  for all test functions $\phi \in C^ \infty ( \mathbb{T}  ^2)$,
\begin{equation}\label{eq:pressure-weak0}
\int_{\TT^2}  p \ \Delta  \phi \, dx =  \int_{ \TT^2} \left(   v^i  \Lambda v^j \partial_{ij}^2 \phi - \partial_i v^j \Lambda v^j \partial_i \phi  \right)\, dx \,.
\end{equation} 
It follows that
\begin{equation}
\label{eq:pressure-weak}
\int_{\TT^2}  p \ \Delta  \phi \, dx =  \int_{ \TT^2} \left(   v^i  \Lambda v^j \partial_{ij}^2 \phi  +  \frac 12 v \cdot  \Lambda v  \Delta \phi  
+ \partial_i v^j \left[\Lambda \, , \, \partial_i \phi \right] v^j     \right)\, dx \,.
\end{equation} 
Therefore, given $v \in L^2(\RR;\dot{H}^{1/2}(\TT^2))$ formula \eqref{eq:pressure-weak} defines $p$ (and therefore also $\nabla p$) as a distribution on $\TT^2$
via
\begin{align*}
\langle p , \Delta \phi \rangle
&= \langle  \Lambda v^j , v^i \partial_{ij}^2 \phi \rangle +  \frac 12 \langle \Lambda v^i , v^i  \Delta \phi  \rangle
+ \langle \partial_i v^j  , \left[\Lambda \, , \, \partial_i \phi \right] v^j \rangle.
\end{align*}
In particular, for $v \in C^{1/2-}_t C^{4/5-}_{x}$, the $\nabla p$ term in \eqref{eq:SQG-a} is a well-defined distribution.
\end{remark}

\subsection{The main result and the main ideas of the proof} \label{sec:results}

Employing the potential velocity formulation of SQG, we will prove the following theorem which is easily seen to imply Theorem \ref{thm:main:inviscid} and Theorem \ref{thm:main:dissipative} (we use the convention that $\gamma=0$ is the inviscid SQG equation):
\begin{theorem}[Nonuniqueness of weak solutions, momentum form]
\label{thm:main}
Suppose $\ee:[0,T]\rightarrow \RR^+$ is a smooth function with compact support. Then for every $1/2<\beta<4/5$, $0\leq \gamma < 2-\beta$ and $\sigma<\beta/(2-\beta)$, there exists a weak solution $v \in C_t^{\sigma}C_x^{\beta}$ satisfying
\begin{align*}
\int_{\TT^2}\abs{\Lambda^{ \sfrac 12}v(x,t)}^2\,dx= \int_{\TT^2}\Lambda v(x,t) \cdot v(x,t) \,dx =\ee(t)\,
\end{align*}
for all $t \in [0,T]$.
\end{theorem}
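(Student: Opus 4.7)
The plan is to construct the weak solution $v$ by a convex integration scheme in the spirit of the Nash/De Lellis--Székelyhidi iteration used in \cite{DLSz2013,BuDLeIsSz2015}, but applied to the potential-velocity momentum formulation~\eqref{eq:SQG}, with an additional $\Lambda^\gamma v$ term in the dissipative case. The key reason to work with $v$ rather than $\theta$ is precisely the obstruction noted in \cite{DLSz2012,Sh2011,IsVi2015}: the multiplier $\RSZ^\perp$ relating $u$ to $\theta$ is odd in frequency, which spoils the algebra of high-frequency/low-frequency interactions at the heart of convex integration; replacing it by the scalar positive multiplier $\Lambda$ in the relation $u=\Lambda v$ restores the structure needed for stress cancellation.

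Iteratively, I would construct a sequence $(v_q,p_q,R_q)$ of smooth triples solving the relaxed system
\begin{equation*}
\partial_t v_q + \Lambda v_q\cdot\nabla v_q - (\nabla v_q)^T \Lambda v_q + \Lambda^\gamma v_q + \nabla p_q = \div R_q,\qquad \div v_q=0,
\end{equation*}
where $R_q$ is a symmetric trace-free tensor playing the role of a generalized Reynolds stress and is defined through an explicit inverse-divergence operator. With lacunary parameters $\lambda_q = a^{b^q}$ and amplitudes $\delta_q$ tuned to the H\"older index $\beta$, I would inductively enforce frequency and amplitude bounds on $v_q$ together with a smallness bound on $R_q$ in an appropriate norm, so that the telescoping series $v = \lim_{q\to\infty} v_q$ converges in $C^\sigma_t C^\beta_x$ for any $\sigma<\beta/(2-\beta)$ and gives a weak solution of~\eqref{eq:SQG} in the sense of Definition~\ref{def:weak:sol:momentum}. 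Compact support in time and the prescription of the Hamiltonian $\ee(t)$ would be enforced by initializing $v_0\equiv 0$ off $\supp \ee$ and retaining one scalar amplitude degree of freedom at each stage to tune $\int |\Lambda^{1/2}v_{q+1}|^2\,dx$ to $\ee(t)$ modulo the inductive error; the limit then satisfies the Hamiltonian identity exactly.

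The perturbation $w_{q+1}=v_{q+1}-v_q$ would be built as a sum of Beltrami-type building blocks $a_\xi(x,t)\,W_\xi\, e^{i\lambda_{q+1}\xi\cdot x}$ oscillating along a finite symmetric set of wavevectors $\xi\in\Lambda\subset\SS^1$, with amplitudes $a_\xi$ depending on a mollification of $(v_q,R_q)$. Because $\Lambda$ acts as the scalar $\lambda_{q+1}|\xi|$ on each such building block, the low-frequency part of the quadratic self-interaction $\Lambda w_{q+1}\cdot\nabla w_{q+1} - (\nabla w_{q+1})^T \Lambda w_{q+1}$, after subtracting a gradient, reduces to an algebraic expression in the amplitudes of the same form as in the Euler case up to an overall factor $\lambda_{q+1}$; a geometric decomposition lemma then selects amplitudes $a_\xi$ so that this low-frequency part equals $-R_q$. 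The remaining high-high, transport, Nash-type, mollification, and dissipation errors are absorbed into $R_{q+1}$ via an inverse-divergence operator that gains one derivative (or its fractional analogue).

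I expect the main difficulties to be threefold. First, constructing the building blocks so that the associated bilinear form $w\mapsto \Lambda w\otimes w - w\otimes \Lambda w$, symmetrised into a gradient-free tensor, spans an open cone of symmetric $2\times 2$ matrices large enough to cancel an arbitrary $R_q$; this is the point where the obstruction of \cite{DLSz2012,Sh2011,IsVi2015} concentrates, and where passing to the potential velocity really pays off, since $\Lambda$ acts diagonally on each plane wave. Second, controlling the Nash and transport errors against the H\"older index $\beta$: because the SQG nonlinearity is effectively second order in $v$, rather than first order as in Euler, the resulting interplay between parameters forces the ceiling $\beta<4/5$ appearing in the theorem. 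Third, handling the dissipation $\Lambda^\gamma w_{q+1}$, whose contribution to the new stress depends on a fractional derivative of $w_{q+1}$ and which, balanced against the other errors, reproduces the restriction $\gamma+\beta<2$. Once these pieces are in place, a standard inductive loop on $q$, with time-mollification of $v_q$ prior to defining $w_{q+1}$, should close the estimates and produce the weak solution with prescribed Hamiltonian $\ee(t)$.
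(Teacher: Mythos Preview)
Your overall strategy is the same as the paper's, and you correctly identify most of the ingredients: the momentum formulation, Beltrami plane waves, the relaxed system with a symmetric trace-free stress, the Hamiltonian-tuning mechanism, and the dissipation constraint $\gamma+\beta<2$. However, there is a genuine gap at precisely the step you flag as the first ``main difficulty,'' and your proposed resolution of it does not work as stated.

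You write that ``because $\Lambda$ acts as the scalar $\lambda_{q+1}|\xi|$ on each such building block, the low-frequency part of $\Lambda w_{q+1}\cdot\nabla w_{q+1}-(\nabla w_{q+1})^T\Lambda w_{q+1}$, after subtracting a gradient, reduces to an algebraic expression in the amplitudes of the same form as in the Euler case up to an overall factor $\lambda_{q+1}$.'' This is only true at the very leading order. The substitution $\Lambda w_{q+1}\approx\lambda_{q+1}w_{q+1}$ produces a commutator error $\sum_k[\Lambda,a_k\psi_k]b_k(\lambda_{q+1}x)$ of size $\|\nabla(a_k\psi_k)\|_{C^0}$, and when this is paired with $\nabla^\perp\cdot w_{-k}\sim\lambda_{q+1}\delta_{q+1}^{1/2}$ in the $k+k'=0$ interaction, the resulting low-frequency vector has amplitude $\sim\lambda_{q+1}\lambda_q\delta_{q+1}$. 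Since the low-frequency part lives at scale $\lambda_q$, the inverse divergence gains only $\lambda_q^{-1}$, yielding a stress of order $\lambda_{q+1}\delta_{q+1}$---the \emph{same} size as the old stress $\mathring R_q$. The iteration does not close on this naive decomposition.

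The paper resolves this by never making the approximation $\Lambda w\approx\lambda w$ at all. Instead, it first rewrites the exact nonlinearity via the 2D identity $\Lambda f\cdot\nabla g-(\nabla g)^T\Lambda f=(\RSZ\,\nabla^\perp\!\cdot f)(\nabla^\perp\!\cdot g)$, so that the low-frequency interaction becomes a symmetrized Riesz-transform product $\tfrac12\big((\RSZ\vartheta_{j,k})\vartheta_{j,-k}+\vartheta_{j,k}\RSZ\vartheta_{j,-k}\big)$ with $\vartheta_{j,k}=\nabla^\perp\cdot\PP_{q+1,k}\tilde w_{q+1,j,k}$. A bilinear Fourier (pseudo-product) computation then writes this \emph{exactly} as $\tfrac12\partial_\ell(\Lambda^{-1}\vartheta_{j,k}\,\vartheta_{j,-k})+\tfrac12\partial_m\SSS^m(\Lambda^{-1}\vartheta_{j,k},\RSZ^\ell\vartheta_{j,-k})$, i.e.\ a gradient plus a divergence of an explicit tensor $\mathcal Q_{j,k}$. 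Only after this exact-divergence step does one expand the bilinear multiplier about the frequencies $\pm k\lambda_{q+1}$: its value there gives the principal term $-\tfrac{\lambda_{q+1}}{2}k\otimes k\,a_{k,j}^2$ that cancels $\mathring R_q$, and the remainder now genuinely involves an extra derivative on the slow amplitudes (hence is lower order). This pseudo-product step is the heart of the construction and is not replaceable by the eigenvalue heuristic you describe.

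Two smaller discrepancies are worth noting. The paper uses single-exponential frequencies $\lambda_q=\lambda_0^q$ rather than double-exponential $a^{b^q}$, and it does not mollify $v_q$ in time; instead it uses Lagrangian phases $\Phi_j$ (back-to-labels maps) and transported stresses $\mathring R_{q,j}$ on overlapping time intervals with a partition of unity $\chi_j^2$. These choices are what make the material-derivative estimates close, and they interact with the pseudo-product analysis above when bounding $D_{t,q}R_O$.
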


The proof will employ a convex integration scheme,  similar in style to that presented in \cite{BuDLeIsSz2015} (cf.\  \cite{DLSz2012b,DLSz2013}). In \cite{BuDLeIsSz2015}, highly oscillatory Beltrami waves formed the principle building block in the construction. It was noted in \cite{ChDeSz2012,Ch2013} that Beltrami waves can be replaced by Beltrami plane waves (see Section \ref{sec:Beltrami_plane_waves}) in order to prove analogous results for the 2-D Euler equations. Such Beltrami planes waves form a large class of stationary solutions to both the 2-D Euler and the inviscid SQG equation; as such, they will form the principle building block in the construction presented here. As a side remark, we note that it is not difficult to see from the analysis in the present paper that the results in \cite{BuDLeIsSz2015,Is2013,Bu2015,BuDLeSz2016} for the 3-D Euler equations can be extended to the 2-D setting by replacing Beltrami waves by Beltrami plane waves.

The fundamental aim in any convex integration scheme is to introduce high frequency oscillations that self-interact due to the nonlinearity in order to produce low frequency modes that cancel error terms. For SQG this so called high-high-low interaction is highly nontrivial. 
Indeed, as was already noted in~\cite{IsVi2015}, if one works with the usual formulation of the SQG equations \eqref{eq:SQG-old} in terms of the active scalar $\theta$, and considers a perturbation $\Theta = \sum_k \Theta_k$, with $\Theta_k(x,t) = a_k(t,x) e^{i \lambda k \cdot x}$ and an amplitude $a_k$ which lives at a frequency much smaller than $\lambda$,  with $\lambda \gg 1$, then to leading order the corresponding velocity field is given by $U = \sum_k U_k$, where $U_k (x,t)= \RSZ^\perp \Theta_k(x,t) = i k^\perp \Theta_k(x,t) + o(\lambda^{-1})$. This implies that the high-high interactions in the nonlinear term $\div(\Theta U) = \sfrac 12 \div(\sum_k \Theta_k U_{-k} + \Theta_{-k} U_k)$ vanish to leading order, since $\Theta_k U_{-k}  + \Theta_{-k} U_k = a_k^2 ( i (-k)^\perp + i k^\perp) + o(\lambda^{-1}) = o(\lambda^{-1})$.\footnote{
This is the reason why in \cite{IsVi2015}, one may only consider active scalar equations with non-odd constitutive laws (that is, the Fourier multiplier relating $u$ to $\theta$ in \eqref{eq:SQG-old} is a non-odd function of frequency), extending prior results in~\cite{CoFaGa2011,Sh2011,Sz2012} for the incompressible porous media equation. See also the recent work~\cite{CaCoFa2016}.
}
To overcome this difficulty we work at the level of the momentum equation for $v$ and employ a bilinear pseudo-product operator (see~\cite{CoMe1978} or Appendix~\ref{app:pseudo:product} below) to rewrite the nonlinearity $u \cdot \nabla v - ( \nabla v)^T\cdot u$ as the sum of a divergence of a $2$-tensor, and a gradient of a scalar function. 
Expanding in frequency around our Beltrami plane waves, we will show that the principal term in high-high-low interactions  is of the correct form to cancel low frequency modes.  This is achieved in Section~\ref{sec:osc} below.

We remark that recently, \cite{Is2016} proved the full Onsager's conjecture for the 3-D Euler equations, employing  a novel technique
involving gluing exact solutions to the Euler equations, along with the use of Mikado flows, introduced in \cite{DaSz2016} as a replacement to 
Beltrami waves. Mikado flows have the advantage of satisfying better oscillation-error estimates (see Section \ref{sec:inductive_step} for 
the definition of the oscillation error in the case of SQG), since they have disjoint spatial support in a thin cylinder. 
Unfortunately, the construction is inherently three-dimensional as it requires that the Mikado flows do not intersect, which is impossible in 2-D. 
Finding a suitable replacement for Mikado flows for the case of the SQG equations or the 2-D Euler equations is an interesting open problem.

\subsection{Notation}
Throughout this paper,  we make use of  the Einstein summation convention, in which  repeated indices are summed from $1$ to $2$. 
For $s \in \RR$, the {homogeneous} Sobolev space norm is 
$\|u\|^2_{\dot H^{s}(  \mathbb{T}^2 )} = \sum_{k \in \mathbb{Z}^2  \setminus \{0\}} |\hat u(k)|^2  |k|^{2s} \,.$ Here it is important that we work with functions of zero mean on $\TT^2$. The fractional Laplacian $\Lambda^s$ may be defined in this context as the Fourier multiplier with symbol $|k|^s$, for all $s\in \RR$.

For a function $f:  \mathbb{T}^2\times \RR \to \mathbb{R}  $, we use the notation 
\[
\|f\|_{C^\beta} \quad \mbox{to denote the space-time norm} \quad \|f\|_{C^0(\RR;C^\beta (  \mathbb{T} ^2 ))}
\]
where the spatial H\"older norm is defined as the sum of the $C^0$ norm and the H\"older seminorm $[\cdot]_{C^\beta}$.
To distinguish functions with higher regularity in time, we use the norm $\norm{f}_{C_t^{\sigma}C_x^{\beta}} = \norm{f}_{C^\sigma(\RR;C^\beta(\TT^2))}$. That is, $t\mapsto f(x,t)$ has $\sigma$-H\"older regularity in time and $x \mapsto f(x,t)$ has $\beta$-H\"older regularity in space. For $f\colon \TT^2\to \RR$ which is just a function of the space variable $x$, we denote by abuse of notation $\norm{f}_{C^0}$ its $C^0(\TT^2)$ norm.

Throughout the manuscript we abuse notation and denote by $f$ the periodic extension to all of $\RR^2$ of a $\TT^2$-periodic function $f$. Consequently, throughout the proof we work with $\RR^2$ convolution kernels and $\RR^2$ Fourier multiplier operators, instead of working with their $\TT^2$, respectively $\ZZ^2$ counterparts, which are obtained via the Poisson summation formula from their $\RR^2$ analogues. See e.g.~\cite[pp. 256--261]{CaZy1954}, or \cite[Chapter VII]{StWe1971} for the main ideas behind this transference principle. In particular, since we work with functions on $\TT^2$ which have zero mean, the resulting $\RR^2$ functions have support in frequency in the complement of a small neighborhood of the origin.

We will use $a \lesssim b$ to denote  $a \leq C b$ for a universal constant $C\geq 1$. Moreover, for an integer $N \geq 1$ we will use $D^N$ to denote any spatial derivative $\partial_x^\alpha$, where $|\alpha|=N$.

\section{Convex integration scheme}
\label{sec:convex-integration}

We use a convex integration scheme inspired by~\cite{BuDLeIsSz2015}. We shall construct a sequence of solutions $(v_q,p_q,\mathring R_q)$ to the {\em relaxed SQG momentum equation}  
\begin{subequations}
\begin{align}
\partial_t v_q + u_q \cdot \nabla v_q - (\nabla v_q)^T \cdot u_q + \nabla p_q + \Lambda^\gamma v_q &= \div \mathring R_q \\
\div v_q &=0 \\
u_q &= \Lambda v_q
\end{align}
\label{eq:relaxed-SQG}%
\end{subequations}
where  $\mathring R_q$ is a symmetric trace-free $2 \times 2$ matrix. The goal is to obtain $\mathring R_q \to 0$ as $q\to \infty$ (in a suitable topology), and show that a limiting function $v_q \to v$ exists, and solves \eqref{eq:SQG}.

\subsection{Parameters}
We fix $\beta >1/2$ to be the H\"older exponent that we expect to obtain for our weak solution $v$, and write it as
\begin{align*}
\beta = \frac{4}{5} - \eps
\end{align*}
for some $0< \eps  \ll 1$. 
For this $\eps>0$ fixed, we also define 
\begin{align*}
0 \leq \gamma <  2 - \beta 
\end{align*}
to be the power of the dissipation in the equation. When $\gamma=0$ it is understood that the equation is inviscid, i.e. that the dissipative term $\Lambda^\gamma$ is absent from the equations.

Define the frequency parameter
\begin{align*}
\lambda_q =  \lambda_0^q  
\end{align*}
for some integer $\lambda_0 \gg 1$ that is sufficiently large integer which is a multiple of $5$. 
Note thus that the spatial frequency, i.e. wavenumber, parameter $\lambda_q$ is strictly increasing in $q$ and grows  exponentially.
We also define the amplitude parameter
\begin{align}
\delta_q = \lambda_0^2\lambda_q^{-2\beta} \, .
\label{eq:delta-q-def}
\end{align}
 
\subsection{Inductive assumption}\label{s:inductive_assump}

We shall inductively assume that the potential velocity $v_q$ has compact support in frequency, contained inside the ball of radius $2\lambda_q$ and has size
\begin{align}
\|v_q\|_{C^1}+\|u_q\|_{C^0} \leq C_0 \delta_q^{1/2}\lambda_q.
\label{eq:ind:q:1}
\end{align}
where $C_0\geq 1$ is a  universal constant, independent of any of the other parameters in the construction.
Similarly, we shall inductively assume that $ \mathring R_q$ has compact support in frequency, inside the ball $\{ \xi \colon  |\xi| \leq 4\lambda_q\}$,  and has amplitude given by
\begin{align}
\| \mathring R_q\|_{C^0} \leq \eps_R \lambda_{q+1} \delta_{q+1}
\label{eq:ind:q:2}
\end{align}
holds, where $\ee(t)$ is the prescribed energy profile and $\eps_R$ is a small constant to be chosen precisely in the construction.
We also make the inductive  assumption that material derivatives for $w_q$ and $\mathring  R_q$ are bounded as
\begin{align}
\norm{ (\partial_t + u_q \cdot \nabla) v_q}_{C^0} &\leq  C_0 \lambda_q^2 \delta_q 
\label{eq:ind:q:3}
\\
\norm{ (\partial_t + u_q \cdot \nabla) u_q}_{C^0} &\leq  C_0 \lambda_q^3 \delta_q 
\label{eq:ind:q:3:b}
\\
\norm{ (\partial_t + u_q \cdot \nabla) \mathring R_q}_{C^0} &\leq  \lambda_q^2 \delta_q^{1/2} \lambda_{q+1} \delta_{q+1}.
\label{eq:ind:q:4}
\end{align}
Here $C_0$ is the same as in \eqref{eq:ind:q:1}. 
Additionally, we assume that for the given prescribed energy profile
\begin{equation}
\label{eq:energy_ind}
0 \leq \ee(t) - \int_{\TT^2}\abs{\Lambda^{\sfrac12} v_q}^2~dx\leq  \lambda_{q+1}\delta_{q+1}
\end{equation}
and 
\begin{equation}\label{eq:zero_reynolds}
\ee(t) - \int_{\TT^2}\abs{\Lambda^{\sfrac12} v_q}^2~dx\leq  \frac{\lambda_{q+1}\delta_{q+1}}{8}~\Rightarrow \mathring R_q(\cdot,t)\equiv 0\,.
\end{equation}

\subsection{Inductive step}\label{sec:inductive_step}
The convex integration scheme consists of correcting the potential velocity $v_q$ with an increment $w_{q+1}$ and an associated transport velocity increment $\Lambda w_{q+1}$ and obtain new velocity fields
\begin{align}
v_{q+1} = w_{q+1} + v_{q} \qquad \mbox{and} \qquad 
u_{q+1} = \Lambda w_{q+1} + u_q
\label{eq:velocity:increment}
\end{align}
such that the following holds:
\begin{proposition}[Main Proposition]
\label{prop:main}
Let $\ee:[0,T]\rightarrow \RR^+$ be a given smooth Hamiltonian profile.
Then, for sufficiently large $\lambda_0\in5 \NN$,  if the pair $(v_q,\mathring R_q)$ satisfy assumptions \eqref{eq:ind:q:1}--\eqref{eq:zero_reynolds} specified above then there exists a new pair $(v_{q+1}, \mathring R_{q+1})$ that satisfy these assumptions with $q$ replaced by $q+1$. Moreover, the difference $w_{q+1}=v_{q+1}-v_q$ has frequency support contained in the annulus  $\{ \xi \colon \lambda_q/2 \leq |\xi| \leq 2\lambda_q\}$ and has size
\begin{equation}\label{eq:shell_est}
\norm{w_{q+1}}_{C^0} \leq C_0 \delta_{q+1}^{\sfrac 12}
\end{equation}
for a fixed universal constant $C_0 \geq 0$.
\end{proposition}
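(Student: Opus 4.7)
The plan is to implement a convex integration step in the style of Buckmaster--De Lellis--Isett--Székelyhidi, adapted to the SQG momentum equation \eqref{eq:SQG} using Beltrami plane waves (Chen--Székelyhidi) rather than Beltrami waves. First I would mollify $(v_q,\mathring R_q)$ in space and time at a scale $\ell$ slightly finer than $\lambda_{q+1}^{-1}$ to obtain $(v_\ell,\mathring R_\ell)$ enjoying arbitrary derivative bounds; the resulting mollification error is absorbed into $\mathring R_{q+1}$ at a cost of small powers of $\lambda_{q+1}/\lambda_q$. I would also partition $[0,T]$ into intervals of length $\tau\sim(\lambda_{q+1}\delta_q^{1/2})^{-1}$ and, on each subinterval, introduce a volume-preserving flow map $\Phi$ transporting $u_\ell$, so that the principal part of the transport error can be measured by material derivatives in the sense of \eqref{eq:ind:q:3}--\eqref{eq:ind:q:4} rather than Eulerian derivatives.

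The perturbation is taken of the form
\begin{equation*}
w_{q+1}(x,t) \;=\; \sum_{k\in \Lambda_{\mathrm{freq}}} a_k(x,t)\, B_k\, e^{i\lambda_{q+1} k \cdot \Phi(x,t)}\,,
\end{equation*}
where $\Lambda_{\mathrm{freq}}\subset \SS^1\cap \QQ^2$ is a finite symmetric set of rational directions, $\{B_k\}$ is a fixed system of Beltrami-plane-wave profiles with $k\cdot B_k=0$ so that $\Lambda(B_k e^{i\lambda_{q+1}k\cdot x})=\lambda_{q+1}(B_k e^{i\lambda_{q+1}k\cdot x})$, and a cutoff $\chi(t)$ ensures \eqref{eq:zero_reynolds}. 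The amplitudes $a_k$ are produced by a geometric lemma selecting
\begin{equation*}
\sum_{k\in \Lambda_{\mathrm{freq}}} \lambda_{q+1}\, |a_k|^2\, k^\perp \otimes k^\perp \;=\; \rho(t)\,\Id \;-\; \mathring R_\ell\,,
\end{equation*}
with $\rho(t)$ chosen so that $\int_{\TT^2}|\Lambda^{1/2} w_{q+1}|^2\,dx$ closes the energy gap $\ee(t)-\int_{\TT^2}|\Lambda^{1/2} v_q|^2\,dx$ to within $\lambda_{q+2}\delta_{q+2}/8$, yielding both \eqref{eq:energy_ind} and \eqref{eq:zero_reynolds} at level $q+1$. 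The $C^0$ bound \eqref{eq:shell_est} is immediate from the geometric lemma since $\|\mathring R_\ell\|_{C^0}\lesssim \lambda_{q+1}\delta_{q+1}$ forces $|a_k|^2 \lesssim \delta_{q+1}$.

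Inserting $v_{q+1}=v_\ell + w_{q+1}$ into \eqref{eq:relaxed-SQG} produces four error contributions to $\div\mathring R_{q+1}$: a linear error $\Lambda w_{q+1}\cdot\nabla v_\ell - (\nabla v_\ell)^T\Lambda w_{q+1}$, a transport/Nash error $(\partial_t+u_\ell\cdot\nabla)w_{q+1}-(\nabla w_{q+1})^T u_\ell$, the oscillation error (quadratic self-interaction of $w_{q+1}$), and for $\gamma>0$ the dissipation error $\Lambda^\gamma w_{q+1}$. Each is inverted by a frequency-projecting inverse-divergence operator $\mathcal R$ which, on inputs frequency-localized to $|\xi|\sim \lambda_{q+1}$, gains a factor $\lambda_{q+1}^{-1}$. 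The bounds \eqref{eq:ind:q:1}--\eqref{eq:ind:q:4} together with stationary-phase estimates on iterates of the material derivative of $a_k$ and $\Phi$ then yield $\|\mathring R_{q+1}\|_{C^0}\le \eps_R\,\lambda_{q+2}\delta_{q+2}$ precisely when $\beta<4/5$, and the dissipation error is absorbed provided $\gamma+\beta<2$, since $\|\mathcal R(\Lambda^\gamma w_{q+1})\|_{C^0}\lesssim \lambda_{q+1}^{\gamma-1}\delta_{q+1}^{1/2}$.

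The main obstacle is the oscillation error, whose low-frequency part must cancel $\div\mathring R_\ell$. In the scalar formulation this cancellation fails because $\RSZ^\perp$ is odd in frequency, as noted in the paragraph containing \eqref{eq:SQG:magic}. The resolution is to use the momentum form: a Coifman--Meyer type bilinear pseudo-product (from the referenced appendix) writes $u\cdot\nabla v-(\nabla v)^T u = \div T(v,v) + \nabla q$, and a direct symbol computation on a Beltrami pair $\{k,-k\}$ shows that the zero-frequency part of $T(w_{q+1},w_{q+1})$ equals $\sum_k \lambda_{q+1}|a_k|^2\,k^\perp\otimes k^\perp$ up to terms supported at frequencies $\gtrsim\lambda_{q+1}$. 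By the geometric choice of $a_k$ above, the low-frequency part exactly produces $\rho(t)\,\Id - \mathring R_\ell$; the $\Id$ piece is absorbed into the new pressure, $\mathring R_\ell$ is cancelled, and the residual high-frequency piece is made $\mathring R_{q+1}$ after applying $\mathcal R$. This single computation (to be carried out in detail in Section~\ref{sec:osc}) is both the crux of the construction and the point at which the ceiling $\beta<4/5$ enters through the parameter count relating $\|\mathring R_{q+1}\|_{C^0}$ to $\lambda_{q+2}\delta_{q+2}$.
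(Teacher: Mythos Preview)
Your proposal captures the essential architecture and, crucially, the main novelty: the bilinear Coifman--Meyer pseudo-product decomposition of $u\cdot\nabla v-(\nabla v)^T u$ is exactly how the paper handles the low-frequency oscillation error in Section~\ref{sec:osc}, and your parameter count on the dissipation error ($\lambda_{q+1}^{\gamma-1}\delta_{q+1}^{1/2}$, requiring $\gamma+\beta<2$) matches. There are, however, several points where your outline diverges from the paper, and one of them is a genuine gap.

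\textbf{Time step.} Your choice $\tau\sim(\lambda_{q+1}\delta_q^{1/2})^{-1}$ does not work here. Since $u_q=\Lambda v_q$, one has $\|\nabla u_q\|_{C^0}\sim\lambda_q^2\delta_q^{1/2}$, and with the single-exponential growth $\lambda_q=\lambda_0^q$ this gives
\[
\tau\,\|\nabla u_q\|_{C^0}\ \sim\ \frac{\lambda_q^2}{\lambda_{q+1}}\ =\ \lambda_0^{\,q-1}\ \gg\ 1\qquad(q\geq 2).
\]
The flow $\Phi$ is then far from the identity, the phase $e^{i\lambda_{q+1}k\cdot\Phi}$ is no longer a slow modulation of $e^{i\lambda_{q+1}k\cdot x}$, and the oscillation estimates collapse. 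The paper instead takes $\tau_{q+1}^{-1}=\lambda_q\lambda_{q+1}\delta_q^{1/4}\delta_{q+1}^{1/4}$ (equation~\eqref{eq:tau:q}), which is chosen precisely so that the transport error \eqref{eq:sharp:1} and the leading oscillation error \eqref{eq:sharp:2} both land at $\lambda_q\delta_q^{1/4}\delta_{q+1}^{3/4}$; it is this balanced quantity that sits below $\lambda_{q+2}\delta_{q+2}$ exactly when $\beta<4/5$.

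\textbf{No mollification; frequency projector instead.} The paper does not mollify. Compact frequency support of $v_q$ (in $|\xi|\leq2\lambda_q$) and of $\mathring R_q$ (in $|\xi|\leq4\lambda_q$) is part of the inductive hypothesis, so derivative bounds of all orders are automatic. The perturbation is made exactly divergence-free and annulus-supported by applying the projector $\PP_{q+1,k}=\PP\,P_{\approx k\lambda_{q+1}}$ of \eqref{eq:PP:q+1:k:def} to each wave. Your formula for $w_{q+1}$ omits any such projector, so once $a_k$ varies in $x$ it is neither divergence-free nor frequency-localized; you would need either a corrector or this projector, and the paper chooses the latter.

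\textbf{Transported stress.} Rather than feed a mollified $\mathring R_\ell$ into the amplitudes, the paper transports $\mathring R_q$ from the center of each time subinterval via \eqref{eq:R:q:transported}, defining $\mathring R_{q,j}$, so that $D_{t,q}a_{k,j}\equiv0$. The mismatch $\mathring R_q-\mathring R_{q,j}$ becomes a separate piece $R_{O,\mathrm{approx}}$ of the oscillation error, controlled by the material-derivative assumption \eqref{eq:ind:q:4}. In your scheme the amplitudes would carry nonzero material derivatives, producing additional transport-type errors you would need to track.
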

We note here that if $(v_q,\mathring{R}_q)$ solves \eqref{eq:relaxed-SQG} at step $q$, and the new velocity $v_{q+1}$ is given by \eqref{eq:velocity:increment}, then upon implicitly\footnote{Equation \eqref{eq:R:new:split} only defines $\div \mathring{R}_{q+1}$. The Reynolds stress $\mathring R_{q+1}$ itself is obtained from \eqref{eq:R:new:split} once we invert the divergence operator, cf.~Definition~\ref{def:BB} below, for the contributions that have large frequency, and we write the low frequency part of the oscillation error in divergence form.}  defining $\mathring R_{q+1}$ by
\begin{align}
\div \mathring R_{q+1}
&= \Big( \partial_t w_{q+1} + u_q \cdot \nabla w_{q+1} \Big) \notag\\
&\qquad + \Big( \Lambda w_{q+1} \cdot \nabla v_q - (\nabla v_q)^T \cdot \Lambda w_{q+1} - (\nabla u_q)^T \cdot w_{q+1}  \Big) \notag\\
&\qquad + \Lambda^\gamma w_{q+1} \notag \\
&\qquad  + \left( \div \mathring R_q + \Lambda w_{q+1} \cdot \nabla w_{q+1} - (\nabla w_{q+1})^T \cdot \Lambda w_{q+1} \right)  \notag\\
&\qquad + \nabla \tilde p_{q+1}
 \notag \\
&=: \div R_{T} + \div R_{N} + \div R_{D} + \div R_{O} + \nabla \tilde p_{q+1}
\label{eq:R:new:split} 
\end{align}
we have that $(v_{q+1},\mathring{R}_{q+1})$ solves \eqref{eq:relaxed-SQG} at step $q+1$. In \eqref{eq:R:new:split} we have split up the Reynolds stress into a Transport, Nash, Dissipation, and Oscillation part, and have denoted by $\tilde p_{q+1}$ a dummy scalar pressure (which is different from the $p_{q+1}$ pressure in equation \eqref{eq:relaxed-SQG}). Note that once $w_{q+1}$ is constructed to have frequency support inside the annulus $\{ \lambda_q/2 \leq |\xi|\leq 2 \lambda_q\}$, it follows from \eqref{eq:R:new:split} and the inductive assumptions on the frequency support of $v_q$ and $\mathring R_q$, that $\mathring R_{q+1}$ has frequency support inside the ball $\{ |\xi|\leq 4 \lambda_{q+1}\}$.

The proof of Proposition~\ref{prop:main} is the main part of the paper, and is achieved in three steps. 
The first step, achieved in Section~\ref{sec:perturbation}, is to construct the velocity increment $w_{q+1}$ which obeys the estimate \eqref{eq:shell_est}, and verify that with this perturbation the bounds \eqref{eq:ind:q:1} and \eqref{eq:ind:q:3}--\eqref{eq:ind:q:3:b}  hold with $q$ replaced with $q+1$. The second step, achieved in Section~\ref{sec:stress}, is to show that the induced Reynolds stress $\mathring{R}_{q+1}$ given by \eqref{eq:R:new:split} obeys estimates \eqref{eq:ind:q:2} and \eqref{eq:ind:q:4} with $q$ replaced with $q+1$. The third step, achieved in Section~\ref{sec:hamiltonian}, is to show that the new velocity field is sufficiently close to the desired Hamiltonian profile, i.e. that bounds \eqref{eq:energy_ind}--\eqref{eq:zero_reynolds} hold with $q$ replaced with $q+1$. Together, these three steps give the proof of the proposition.

Theorem \ref{thm:main} is simple consequence of Proposition~\ref{prop:main}, as we show next.

\subsection{Proof of Theorem~\ref{thm:main}}
\begin{proof}[Proof of Theorem \ref{thm:main}]
We start the iteration by setting $(v_{0}, \mathring R_{0})$ to be the trivial zero solution.  Then \eqref{eq:ind:q:1}-\eqref{eq:ind:q:4} and \eqref{eq:zero_reynolds} follow trivially. Moreover, choosing $\lambda_0$ sufficiently large we can ensure
\begin{align*}
\ee(t)\leq  \lambda_1\delta_1=\lambda_0^{3-2\beta}\,,
\end{align*}
and thus \eqref{eq:energy_ind} holds. Then, we apply Proposition \ref{prop:main} iteratively to obtain a sequence $v_q$ converging in $C^{\beta}$ to a  weak solution 
\[
v = v_0 + \sum_{q\geq 0} (v_{q+1} - v_q) = v_0 + \sum_{q\geq 0} w_{q+1}
\]
of SQG. The convergence in $C^{\beta}$ follows directly from the frequency support of the perturbations $w_{q}$ and the estimate \eqref{eq:shell_est}. Moreover, the estimate \eqref{eq:energy_ind} implies that
\begin{align*}
\int_{\TT^2}\abs{\Lambda^{\sfrac 12}v(x,t)}^2\,dx=\ee(t)\,.
\end{align*}

Note, as a consequence of  \eqref{eq:ind:q:3} and  \eqref{eq:ind:q:1}, it follows that
\begin{align*}
\norm{\partial_t v_q}_{C^0}\leq &\norm{(\partial_t+u_q\cdot \nabla)v_q}_{C^0}+\norm{u_q}_{C^0}\norm{v_q}_{C_1}\\
\lesssim & \lambda_q^2\delta_q\,.
\end{align*}
Thus, by interpolation, using the decomposition $w_q=v_{q}-v_{q-1}$ we obtain
\begin{align*}
\norm{w_q}_{C_t^{\sigma}C_x^0}\lesssim &\norm{w_q}_{C_t^{0}C_x^0}^{1-\sigma}\norm{v_q-v_{q-1}}_{C_t^{1}C_x^0}^{\sigma}\\
\lesssim &\norm{w_q}_{C_t^{0}C_x^0}^{1-\sigma}\left(\norm{v_q}_{C_t^{1}C_x^0} + \norm{v_{q-1}}_{C_t^{1}C_x^0}\right)^{\sigma}\\
\lesssim & \delta_q^{\frac{1-\sigma}{2}} \left(\lambda_q^2\delta_q\right)^{\sigma}\\
=&\lambda_0^2\lambda_q^{-(1-\sigma)\beta +2\sigma(1-\beta)}\\
=&\lambda_0^2\lambda_q^{-\beta+\sigma(2-\beta)}\,
\end{align*}
Hence, if $\sigma<\frac{\beta}{2-\beta}$, then $v_q$ convergences uniformly in ${C_t^{\sigma}C_x^0}$.
\end{proof}

\section{The velocity perturbation}
\label{sec:perturbation}
\subsection{Technical preliminaries}
\subsubsection{Inverse of the divergence}
In defining $R_T, R_D, R_O$, and $R_N$, we need to use the fact that any divergence free vector function $f$ with zero mean on $\TT^2$ may be written as a divergence. More precisely:
\begin{definition}[Inverse divergence]
\label{def:BB}
Let $f$ be divergence free and with zero mean on $\TT^2$. Then we have
\begin{align*}
f = \div (\BB f),  \quad \mbox{or in components} \quad   f^i = \partial_j (\BB f)^{ij}
\end{align*}
where
\begin{align*}
(\BB f)^{ij} := - \partial_j \Lambda^{-2}   f^i - \partial_i  \Lambda^{-2} f^j.
\end{align*}
For $f$ which is not necessarily divergence free, we define
\begin{align*}
\BB f := \BB \PP f\,,
\end{align*}
where $\PP = \Id + \RSZ\otimes \RSZ$ is the Leray projector. Lastly, when $f$ does not have zero mean on $\TT^2$, we define 
\begin{align*}
\BB f := \BB \left(f - \frac{1}{|\TT^2|}\int_{\TT^2} f(x) dx \right).
\end{align*}
In particular, we have that $\div(\BB f) = \PP f$ is divergence free, and $\BB f$ is a symmetric trace free matrix. Properties of the operator $\BB$ are discussed in Appendix~\ref{sec:BB} below.
\end{definition}

\subsubsection{Beltrami plane waves}\label{sec:Beltrami_plane_waves}
For $k \in \SS^1$, we define 
\begin{align}
b_k(\xi) = i k^\perp e^{i k \cdot \xi} \qquad \mbox{and} \qquad c_k(\xi) = e^{i k \cdot \xi}
\label{eq:bk:ck:def}
\end{align}
where we notice that since $k \in \SS^1$, we have
\begin{align*}
b_k = \nabla_\xi^\perp c_k \qquad \mbox{and} \qquad c_k = - \nabla_\xi^\perp \cdot b_k.
\end{align*}
It is also worth noting here that  $b_k$ is an eigenfunction of $\Lambda$ with eigenvalue $1$, that is
\begin{align}
\Lambda_\xi b_k(\xi) = b_k(\xi)\,,
\label{eq:bk-eigenvalue}
\end{align}
since $k \in \SS^1$. Also it will be sometimes useful to note that since $(k^\perp)^\perp =  - k$, we have
\begin{align}
(b_k(\xi))^\perp = - i k c_k(\xi)  = - \nabla_\xi c_k(\xi).
\label{eq:bk-perp}
\end{align}

\subsubsection{Geometric Lemma}
For any finite family of vectors $\Omega\subset \SS^1$ and constants $a_k\in \CC$, such that $a_{-k} = \overline{a_k}$, if we set
\begin{align*}
W(\xi):=\sum_{k\in\Omega}a_k b_k(\xi)
\qquad \mbox{and}\qquad 
V(\xi):=\sum_{k\in\Omega}a_k c_k(\xi)\,,
\end{align*}
then we have the following identity
\begin{equation}\label{eq:Beltrami_identity}
\begin{split}
\div_{\xi}(W\otimes W)=&\frac12 \nabla_{\xi}\abs{W}^2+(\nabla^{\perp}_{\xi}\cdot W)W^{\perp}\\
=&\frac12 \nabla_{\xi}\abs{W}^2-V\nabla V\\
=&\frac12 \nabla_{\xi}\left(\abs{W}^2-\abs{V}^2\right).
\end{split}
\end{equation}
Adopting the notation $W_k(\xi)=a_k b_k(\xi)$, we also note that
\begin{align*}
\sum_{k\in \Omega} W_k\otimes W_{-k}=
\sum_{k\in \Omega} \abs{a_k}^2 k^{\perp}\otimes k^{\perp}\,.
\end{align*}

\begin{lemma}
\label{lem:split}
Let $B_{\eps}(\Id)$ denote the ball of symmetric $2\times 2$ matrices, centered at $\Id$ of radius $\eps$. We can choose $\eps_{\gamma}>0$  such that there exist  disjoint finite subsets 
$$
\Omega_j\subset \SS^1, \qquad j\in \{1, 2\}~,
$$
and smooth positive functions 
\[
\gamma_k\in C^{\infty}\left(B_{\eps_{\gamma}} (\Id)\right), \qquad j\in \{1,2\}~, \qquad k\in\Omega_j~,
\]
such that
\begin{itemize}
\item[\bf{(a)}] For each $j$ we have $5 \Omega_j\subset \ZZ^2$.
\item[\bf{(b)}] If $k\in \Omega_j$ then $-k\in \Omega_j$ and $\gamma_k = \gamma_{-k}$.
\item[\bf{(c)}] For each $R\in B_{\eps_\gamma} (\Id)$ we have the identity 
\begin{equation}\label{e:split}
R = \frac{1}{2} \sum_{k\in\Omega_j} \left(\gamma_k(R)\right)^2 (k^{\perp}\otimes k^{\perp}),
\end{equation}
for all $R\in B_{\eps_{\gamma}}(\Id)$.
\item[\bf{(d)}] For $k, k' \in \Omega_j$, with $k+k' \neq 0$, we have that {$|k+k'| \geq \frac 12$}.
\end{itemize}
\end{lemma}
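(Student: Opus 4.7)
The plan is to build $\Omega_1$ and $\Omega_2$ explicitly from unit vectors with coordinates in $\tfrac{1}{5}\ZZ$ (produced by the Pythagorean identity $3^2+4^2=5^2$), and obtain the smooth $R$-dependence of the $\gamma_k$ by solving a linear system. The only points $(a,b)\in\SS^1$ with $5(a,b)\in\ZZ^2$ are the twelve vectors obtained from $(\pm 5,0),(0,\pm 5),(\pm 3,\pm 4),(\pm 4,\pm 3)$ by division by $5$. I would split these into two disjoint, antipodally symmetric families
\begin{align*}
\Omega_1 &:= \{\pm(1,0),\; \pm(\tfrac{3}{5},\tfrac{4}{5}),\; \pm(-\tfrac{3}{5},\tfrac{4}{5})\}, \\
\Omega_2 &:= \{\pm(0,1),\; \pm(\tfrac{4}{5},\tfrac{3}{5}),\; \pm(\tfrac{4}{5},-\tfrac{3}{5})\},
\end{align*}
so that conditions (a) and (b) are immediate, and disjointness is clear by inspection of first coordinates.

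For (c), fix $j\in\{1,2\}$ and let $k^{(1)},k^{(2)},k^{(3)}$ be representatives of the three antipodal classes in $\Omega_j$. Under the substitution $t_i:=\gamma_{k^{(i)}}^2$ and the constraint $\gamma_k=\gamma_{-k}$, identity \eqref{e:split} becomes the \emph{linear} system
\[
\sum_{i=1}^{3} t_i\, k^{(i)\perp}\otimes k^{(i)\perp} \;=\; R
\]
in the unknown $t=(t_1,t_2,t_3)\in\RR^3$. A single $3\times 3$ determinant computation shows that for either choice of $\Omega_j$, the three rank-one tensors $k^{(i)\perp}\otimes k^{(i)\perp}$ are linearly independent in the three-dimensional space $\mathrm{Sym}_2(\RR)$, so the system admits a unique solution $t(R)$ depending linearly (hence smoothly) on $R$. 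A direct calculation gives $t(\Id)=(\tfrac{7}{16},\tfrac{25}{32},\tfrac{25}{32})$ (for $\Omega_1$ with the ordering $(1,0),(\tfrac35,\tfrac45),(-\tfrac35,\tfrac45)$, and analogously for $\Omega_2$), with every entry strictly positive. By continuity, after possibly shrinking $\eps_\gamma$, the $t_i(R)$ remain positive on $B_{\eps_\gamma}(\Id)$, so one may set $\gamma_k(R):=\sqrt{t_i(R)}$ whenever $k\in\{k^{(i)},-k^{(i)}\}$. This gives smooth positive $\gamma_k$ satisfying $\gamma_k=\gamma_{-k}$ and verifying (c).

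Property (d) reduces to a finite check: since $\Omega_j\subset\tfrac{1}{5}\ZZ^2$ is finite, the set of sums $k+k'$ with $k,k'\in\Omega_j$ is finite and contained in $\tfrac{1}{5}\ZZ^2$. Direct enumeration shows that the only pairs summing to the origin are the three antipodal pairs, while every other nonzero sum has magnitude at least $\tfrac{2\sqrt{5}}{5}>\tfrac{1}{2}$.

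The only nontrivial aspect will be the combinatorial design of the two families, which must be simultaneously disjoint, have tensors $k^{\perp}\otimes k^{\perp}$ spanning $\mathrm{Sym}_2(\RR)$, and admit a decomposition of $\Id$ with strictly \emph{positive} coefficients. A naive choice such as $\{\pm(1,0),\pm(\tfrac35,\tfrac45),\pm(\tfrac45,\tfrac35)\}$ fails this last requirement, since the off-diagonal entries of all three $k^\perp\otimes k^\perp$ then have the same sign, making a positive representation of $\Id$ impossible. The splitting above pairs each Pythagorean direction with its reflection across a coordinate axis, which cancels the off-diagonal sign obstruction and permits the desired positive decomposition.
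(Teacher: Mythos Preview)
Your proof is correct and follows essentially the same approach as the paper: the sets $\Omega_1$ and $\Omega_2$ you construct coincide (up to sign relabeling) with the paper's choice $\Omega_1=\{\pm(1,0),\pm(\tfrac35,\tfrac45),\pm(\tfrac35,-\tfrac45)\}$ and $\Omega_2=\Omega_1^\perp$, and your computation $t(\Id)=(\tfrac{7}{16},\tfrac{25}{32},\tfrac{25}{32})$ matches theirs. The only cosmetic difference is that you solve the linear system for $t_i=\gamma_{k^{(i)}}^2$ directly and invoke continuity, whereas the paper phrases this as an application of the inverse function theorem; since the system is linear in the $t_i$, your formulation is if anything slightly more direct.
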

\begin{proof}[Proof of Lemma~\ref{lem:split}]
First consider the case of constructing $\Omega_1$.  Define $\Omega_1^+=\{k_1,k_2,k_3\}$ where $k_1:=(1,0)$, $k_2:=(\sfrac 35,\sfrac 45)$ and $k_3:=(\sfrac 35,-\sfrac45)$. With these choices we make the following observations.
First, the matrices $k_i^{\perp}\otimes k_i^{\perp}$ for $i=1,2,3$ are linearly independent.
Second, we have the identity
\begin{equation*}
\begin{split}
\Id=&\left(1- \frac{5^2\cdot3^2}{ 4^2\cdot 5^2}\right)k_1\otimes k_1+\frac{5^2}{2\cdot 4^2}k_2\otimes k_2+\frac{5^2}{2\cdot 4^2} k_3\otimes k_3\\
=&\frac{7}{16}k_1\otimes k_1+\frac{25}{32}k_2\otimes k_2+\frac{25}{32} k_3\otimes k_3\\
=&\frac{7}{16}k_1^{\perp}\otimes k_1^{\perp}+\frac{25}{32}k_2^{\perp}\otimes k_2^{\perp}+\frac{25}{32} k_3^{\perp}\otimes k_3^{\perp}.
\end{split}
\end{equation*}
Hence, setting $\Omega_1=-\Omega_1^+\cup \Omega_1^+$, and applying the inverse function theorem to construct $\gamma_k$ we obtain properties (a)--(d). Similarly, setting $\Omega_2=\Omega_1^{\perp}= \{k^\perp \colon k \in \Omega_1\}$, we may construct $\gamma_k$ for $k\in \Omega_2$ and obtain properties (a)--(d).
\end{proof}

\subsubsection{Time cutoffs and the back-to-labels map}
We let $0 \leq \chi \leq 1$ be a smooth cutoff function which is identically $1$ on $[1,2]$,  vanishes on the complement of $[1/2,4]$, and defines a partition of unity according to 
\begin{align*}
\sum_{j \in \ZZ} \chi^2(t - j) = 1
\end{align*}
for all $t \in \RR$. We shall also define
\begin{align}
\chi_{j}(t) = \chi(t \tau_{q+1}^{-1} - j)
\label{eq:chi:q+1:j:def}
\end{align}
where for ease of notation we suppress the dependence of $\chi_j$ on $q$.

For every $j \in \ZZ$, we define the following back-to-labels map $\Phi_j(x,t)$ by solving the transport equation
\begin{align*}
\left( \partial_t + u_q \cdot \nabla \right) \Phi_j &=  0 \, ,\\
\Phi_j(x,j \tau_{q+1}) &= x \, ,
\end{align*}
where we define  the time step parameter $\tau_{q+1}$  by
\begin{align}
\tau_{q+1}^{-1} = \lambda_{q} \lambda_{q+1} \delta_{q}^{1/4} \delta_{q+1}^{1/4} \, .
\label{eq:tau:q}
\end{align}
The motivation for this scaling of $\tau_{q+1}$ comes from balancing the oscillation and the transport error (cf.~estimates \eqref{eq:sharp:1} and \eqref{eq:sharp:2} below).
In particular, we note that 
\begin{align}
\tau_{q+1} \norm{\nabla u_q}_{C_0} \leq \tau_{q+1} \lambda_q^2 \delta_q^{1/2}=\lambda_0^{-1+\beta/2},
\label{eq:CFL}
\end{align}
so that since $\beta<2$, then for $\lambda_0$ large, on a time interval of length $2 \tau_{q+1}$, the flow $\Phi_j$ induced by $u_q$ does not depart substantially from the identity.

\subsubsection{Leray projector and a frequency localizer}
Lastly, we define $\PP = \Id + \RSZ \otimes \RSZ$ to be the Leray projector, and for $k \in \SS^1$ and $\lambda_{q+1}$ as above we set
\begin{align}
\PP_{q+1,k} = \PP P_{\approx k \lambda_{q+1}}
\label{eq:PP:q+1:k:def}
\end{align}
where $P_{\approx \lambda_{q+1} k}$ is a zero order Fourier multiplier operator with symbol $ \hat K_{\approx k \lambda_{q+1}} = \hat K_{\approx 1} (\xi/\lambda_{q+1} - k)$. That is, 
\begin{align*} 
(P_{\approx k \lambda_{q+1}} f)^{\hat \ }(\xi)  =  \hat K_{\approx k \lambda_{q+1}}(\xi) \hat f(\xi) = \hat{K}_{\approx 1}\left( \frac{\xi}{\lambda_{q+1}} - k \right) \hat{f}(\xi),
\end{align*} 
where the function $\hat K_{\approx 1}$ is a smooth bump function supported on the ball $\{ \xi \colon |\xi|\leq \frac{1}{8}\}$, and such that  $\hat K_{\approx 1}(\xi) = 1$ on the smaller ball $ \left\{ \xi \colon   |\xi| \leq  \frac{1}{16} \right\}$.
Note in particular that $0 \leq a,b$ we have 
\[
\sup_{\xi \in \RR^2} |\xi|^a |\nabla_{\xi}^b \hat K_{\approx k \lambda_{q+1}} | \leq C_{a,b} \lambda_{q+1}^{a-b}
\] 
for a suitable constant $C_{a,b}$ that is independent of $\lambda_{q+1}$, and similarly
\begin{align*}
\norm{ |x|^{b} \nabla_x^a K_{\approx k \lambda_{q+1} }  }_{L^1_x(\RR^2)} \leq C_{a,b} \lambda_{q+1}^{a-b}
\end{align*}
holds for $0 \leq a,b \leq 2$, and the constant $C_{a,b}$ is independent of $\lambda_{q+1}$. 

We note here that  $\PP_{q+1,k}$ is a convolution operator with kernel $K_{q+1,k}(x)$, i.e., for $f$ which is $\TT^2$-periodic we may write
\[
\PP_{q+1,k} f(x) = \int_{\RR^2} K_{q+1,k}(y) f(x-y) dy\,,
\]
with a kernel $K_{q+1,k}$ that obeys
\begin{align}
\norm{ |x|^{b} \nabla_x^a K_{q+1,k}(x) }_{L^1_x(\RR^2)} \leq C_{a,b} \lambda_{q+1}^{a-b}
\label{eq:projection:kernel}
\end{align}
for $0 \leq a,b$, and the constant $C_{a,b}$ is independent of $\lambda_{q+1}$.
Here we have implicitly used that $\RSZ \otimes \RSZ = \PP - \Id$ is a matrix zero order Fourier multiplier, whose symbol is smooth away from the origin.

\subsection{Construction of the perturbation}
\label{sec:w:q+1:construction}

With these notations in hand, we now define the potential velocity perturbation $w_{q+1}$ as
\begin{align}
w_{q+1}(x,t) = \sum_{j \in \ZZ,k \in \Omega_{j}} \chi_{j}(t) \PP_{q+1,k} \big( a_{k,j}(x,t) b_k( \lambda_{q+1} \Phi_j(x,t) ) \big)\,,
\label{eq:w:q+1:def}
\end{align}
where the functions $a_{k,j}(x,t)$ are to be defined in \eqref{eq:ak:def} below, and $\Omega_j =  \Omega_1$ if $j$ is odd, while $\Omega_j = \Omega_2$ if $j$ is even. The definition of $\PP_{q+1,k}$ implies that the increment $w_{q+1}$ has compact support in frequency space inside $\{ \xi \colon \lambda_{q+1}/2 \leq |\xi|\leq 2 \lambda_{q+1}\}$, as required in the inductive step.

Let $\mathring R_{q,j}$ define the solution to the transport equation:
\begin{subequations}
\label{eq:R:q:transported}  
\begin{align}
\left( \partial_t + u_q \cdot \nabla \right) \mathring R_{q,j} &=  0 \, ,\\
\mathring R_{q,j}(x,j \tau_{q+1}) &= \mathring R_q(x,j \tau_{q+1})  \, ,
\end{align}
\end{subequations}
and set
\begin{equation}
R_{q,j}:= {\lambda_{q+1}} \rho_j\Id - \mathring R_{q,j}
\label{eq:R:q:def}
\end{equation}
where 
\begin{equation}\label{eq:rho_def}
\rho(t) := \frac{1}{(2\pi)^2}\min\left(\ee(t) - \int_{\TT^2}\abs{\Lambda^{\sfrac12} v_q}^2~dx-\frac{\lambda_{q+
2}\delta_{q+2}}{2},\, 0\right)~\mbox{and}~
\rho_j = \rho(\tau_{q+1}j)\,.
\end{equation} 
The constants $\rho_j$ are chosen in order to ensure convergence of the Hamiltonian to the desired profile. Note that by the inductive assumption~\eqref{eq:energy_ind} we have that
\begin{align*}
\rho_j  \leq \delta_{q+1}\,.
\end{align*}
Then to conclude our definition of the perturbation $w_{q+1}$, we define
\begin{align}
a_{k,j}(x,t):= \begin{cases}
\rho_j^{\sfrac12}\gamma_k\left(\frac{ R_{q,j}(x,t)}{{\lambda_{q+1}} \rho_j}\right)  &\mbox{if } \rho_j\neq 0 \\ 
0 & \mbox{if } \rho_j=0.
\end{cases}
\label{eq:ak:def}
\end{align}
Note that in order that $a_{k,j}$ is well defined we need to ensure that if $\rho_j\neq 0$ then
\[\frac{ R_{q,j}(x,t)}{{\lambda_{q+1}} \rho_j}\in B_{\eps_{\gamma}(\Id)}\,. \]
Since $R_{q,j}$ satisfies a transport equation it suffices to prove that  
\begin{equation}\label{eq:rho_cond}
\frac{\norm{\mathring R_{q}(\cdot,j\tau_{q+1})}_{C^0)}}{{\lambda_{q+1}} \rho_j}\leq \eps_{\gamma}\,. 
\end{equation}
By \eqref{eq:zero_reynolds} is suffices to consider the case when
\[e(j\tau_{q+1}) - \int_{\TT^2}\abs{\Lambda^{\sfrac12} v_q(x,j\tau_{q+1})}^2~dx\geq \frac{\lambda_{q+1}\delta_{q+1}}{8}\,,\]
which implies (assuming $\lambda_0$ is chosen sufficiently large)
\[\rho_j\geq \frac{\lambda_q\delta_{q+1}}{8}.\]
Applying \eqref{eq:ind:q:2} yields
\begin{align*}
\frac{\|\mathring R_q(\cdot, j\tau_{q+1})\|_{C^0} }{\lambda_{q+1}\rho_j} &\leq 8\eps_R
\end{align*}
Then as long as $2\eps_R\leq \eps_{\gamma}$ we obtain \eqref{eq:rho_cond}.

Throughout the paper it will be sometimes convenient to denote
\begin{align}
\tilde w_{q+1,j,k} = \chi_{j}(t)  a_{k,j}(x,t) b_k( \lambda_{q+1} \Phi_j(x,t) ) 
\label{eq:tilde:w:q+1:def}
\end{align}
so that in adopting the abuse of notation $\sum_{j ,k}=\sum_{\{j:\rho_j\neq 0\},k \in \Omega_{j}}$, the equation \eqref{eq:w:q+1:def} reads
\begin{align*}
w_{q+1}(x,t) = \sum_{j ,k} \PP_{q+1,k} \tilde w_{q+1,j,k}\,.
\end{align*}
We also adopt the notation
\begin{align}
\psi_{q+1,j,k}(x,t)= \frac{c_k(\lambda_{q+1} \Phi_j(x,t))}{c_k(\lambda_{q+1} x)} = e^{i \lambda_{q+1} (\Phi_j(x,t) - x) \cdot k}
\label{eq:psi:q+1:j:k:def}
\end{align}
so that 
\begin{align*}
b_k( \lambda_{q+1} \Phi_j(x,t) ) = b_k(\lambda_{q+1} x) \psi_{q+1,j,k}(x,t)~.
\end{align*}

\subsection{Bounds on the perturbation}
\begin{lemma}
\label{lem:w:q+1:bounds}
With $w_{q+1}$ as defined in \eqref{eq:w:q+1:def}, we have that
\begin{subequations}
\begin{align}
\norm{w_{q+1}}_{C^0} &\leq C_0 \delta_{q+1}^{1/2} 
\label{eq:w:q+1}\\
\norm{v_{q+1}}_{C^1} +\norm{u_{q+1}}_{C^0} &\leq C_0 \delta_{q+1}^{1/2} \lambda_{q+1}
\label{eq:v+u:q+1}\\
\norm{D_{t,q} w_{q+1}}_{C^0} &\leq C_0 \tau_{q+1}^{-1} \delta_{q+1}^{1/2}
\label{eq:Dt:w:q+1}\\
\norm{D_{t,q+1} v_{q+1}}_{C^0} &\leq C_0 \lambda_{q+1}^2 \delta_{q+1} 
\label{eq:Dt:v:q+1}\\
\norm{D_{t,q+1} u_{q+1}}_{C^0} &\leq C_0 \lambda_{q+1}^3 \delta_{q+1} 
\label{eq:Dt:u:q+1}
\end{align}
\end{subequations}
for a universal constant $C_0 \geq 1$, which is the same as the constant in Section~\ref{s:inductive_assump}.
\end{lemma}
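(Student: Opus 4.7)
The plan is to verify the five bounds in order. For each, I reduce to a single summand $\PP_{q+1,k}\tilde w_{q+1,j,k}$: for fixed $t$ only finitely many cutoffs $\chi_j$ are nonzero and $|\Omega_j|=6$, so the full sum costs only a universal constant. For the $C^0$ bound \eqref{eq:w:q+1}, I use $|\gamma_k|\lesssim 1$ on $B_{\eps_\gamma}(\Id)$ together with $\rho_j\leq \delta_{q+1}$ to get $\|a_{k,j}\|_{C^0}\lesssim \delta_{q+1}^{1/2}$; since $|b_k|=1$, this gives $\|\tilde w_{q+1,j,k}\|_{C^0}\lesssim \delta_{q+1}^{1/2}$, and the kernel bound \eqref{eq:projection:kernel} at $a=b=0$ shows that $\PP_{q+1,k}$ maps $L^\infty$ to $L^\infty$ with constant independent of $\lambda_{q+1}$. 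The bounds on $v_{q+1}$ and $u_{q+1}$ in \eqref{eq:v+u:q+1} then follow by Bernstein applied to the annulus-supported perturbation $w_{q+1}$, combined with the inductive estimate \eqref{eq:ind:q:1} on $(v_q,u_q)$, which is absorbed because $\lambda_q\delta_q^{1/2}\ll \lambda_{q+1}\delta_{q+1}^{1/2}$ for $\beta<1$ (a consequence of $\beta<4/5$).

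The heart of the argument is the transport-derivative bound \eqref{eq:Dt:w:q+1}. The crucial cancellation is that $D_{t,q}\Phi_j=0$ by definition of the back-to-labels map and $D_{t,q}R_{q,j}=0$ by \eqref{eq:R:q:transported}; since $\rho_j$ is constant in time, this forces $D_{t,q}a_{k,j}=0$ and $D_{t,q}b_k(\lambda_{q+1}\Phi_j)=0$, so that $D_{t,q}\tilde w_{q+1,j,k}=\tau_{q+1}^{-1}\chi'(t\tau_{q+1}^{-1}-j)\,a_{k,j}\,b_k(\lambda_{q+1}\Phi_j)$ has $C^0$ norm $\lesssim \tau_{q+1}^{-1}\delta_{q+1}^{1/2}$. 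Commuting $D_{t,q}$ past $\PP_{q+1,k}$ (only the $u_q\cdot\nabla$ piece contributes) produces $[u_q\cdot\nabla,\PP_{q+1,k}]\tilde w_{q+1,j,k}$; writing $u_q(x-y)-u_q(x)=-\int_0^1 y\cdot\nabla u_q(x-sy)\,ds$, integrating by parts in $y$, and exploiting $\div u_q=0$ together with the moment estimates \eqref{eq:projection:kernel} at $(a,b)=(1,1)$ and $(0,1)$, yields the commutator bound $\|[u_q\cdot\nabla,\PP_{q+1,k}]f\|_{C^0}\lesssim \|\nabla u_q\|_{C^0}\|f\|_{C^0}\lesssim \lambda_q^2\delta_q^{1/2}\|f\|_{C^0}$. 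This is subdominant precisely because $\tau_{q+1}$ in \eqref{eq:tau:q} was calibrated so that $\lambda_q^2\delta_q^{1/2}\leq \tau_{q+1}^{-1}$, equivalently $\lambda_q\delta_q^{1/4}\leq \lambda_{q+1}\delta_{q+1}^{1/4}$, which holds for $\beta<2$.

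The last two bounds follow from the identity $D_{t,q+1}=D_{t,q}+\Lambda w_{q+1}\cdot\nabla$. For \eqref{eq:Dt:v:q+1}, one decomposes $D_{t,q+1}v_{q+1}=D_{t,q}v_q+D_{t,q}w_{q+1}+\Lambda w_{q+1}\cdot\nabla v_{q+1}$, controlled respectively by the inductive bound \eqref{eq:ind:q:3}, the previous paragraph, and by Bernstein as $\|\Lambda w_{q+1}\cdot\nabla v_{q+1}\|_{C^0}\lesssim \lambda_{q+1}^2\delta_{q+1}$, which dominates. For \eqref{eq:Dt:u:q+1}, expanding $u_{q+1}=u_q+\Lambda w_{q+1}$ in $D_{t,q+1}u_{q+1}$ introduces the additional commutator $[u_q\cdot\nabla,\Lambda]w_{q+1}$; since $\Lambda$ has a symbol that is smooth off the origin, its principal symbol in the commutator is of order zero, giving $\|[u_q\cdot\nabla,\Lambda]w_{q+1}\|_{C^0}\lesssim \|\nabla u_q\|_{C^0}\|w_{q+1}\|_{C^0}$. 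The dominant contribution is $\Lambda w_{q+1}\cdot\nabla\Lambda w_{q+1}$, bounded by $\lambda_{q+1}^3\delta_{q+1}$ via Bernstein, and every other piece is checked to be smaller by a direct exponent comparison using $\delta_q=\lambda_0^2\lambda_q^{-2\beta}$. The main obstacle is the sharp commutator estimate in the middle paragraph: its balance against the $\chi_j'$ contribution is exactly what forces the scaling of $\tau_{q+1}$ and will ultimately constrain the admissible range of $\beta$.
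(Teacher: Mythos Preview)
Your approach is essentially the same as the paper's, and the first four bounds are handled correctly. The one place that needs correction is the commutator step in \eqref{eq:Dt:u:q+1}. You write that $[u_q\cdot\nabla,\Lambda]$ has ``principal symbol in the commutator of order zero,'' and conclude $\|[u_q\cdot\nabla,\Lambda]w_{q+1}\|_{C^0}\lesssim \|\nabla u_q\|_{C^0}\|w_{q+1}\|_{C^0}$. This is off by one derivative: since $[u_q\cdot\nabla,\Lambda]f=-[\Lambda,u_q^j]\partial_j f$, the Calder\'on commutator gives an order-zero operator acting on $\nabla f$, so the commutator $[u_q\cdot\nabla,\Lambda]$ is an operator of order one, not zero. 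On the frequency-localized $w_{q+1}$ the correct bound is therefore $\lesssim \lambda_{q+1}\|\nabla u_q\|_{C^0}\|w_{q+1}\|_{C^0}\lesssim \lambda_{q+1}\lambda_q^2\delta_q^{1/2}\delta_{q+1}^{1/2}$, which is still dominated by $\lambda_{q+1}^3\delta_{q+1}$ since $\beta<2$. So the overall conclusion survives, but your stated inequality is false as written.

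There is also a technical point worth noting: the Calder\'on commutator estimate (Lemma~\ref{lem:Calderon}) is an $L^p$ statement for $1<p<\infty$, and does not directly give a $C^0$ bound. The paper avoids this by exploiting the frequency localization of $w_{q+1}$: it replaces $\Lambda w_{q+1}$ by $\tilde P_{\approx\lambda_{q+1}}\Lambda w_{q+1}$, so that the relevant commutator is $[u_q\cdot\nabla,\tilde P_{\approx\lambda_{q+1}}\Lambda]w_{q+1}$, which falls under the convolution-kernel commutator Lemma~\ref{lem:u:grad:commutator} with $s=1$. This yields the same $\lambda_{q+1}\|\nabla u_q\|_{C^0}\|w_{q+1}\|_{C^0}$ bound, but in a way that is legitimate in $C^0$. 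You should either insert this localization explicitly or invoke Lemma~\ref{lem:u:grad:commutator} with $s=1$ rather than appeal to a pseudodifferential heuristic.
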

In particular, the bounds \eqref{eq:v+u:q+1}, \eqref{eq:Dt:v:q+1}, and \eqref{eq:Dt:u:q+1} show that the inductive estimates \eqref{eq:ind:q:1}, \eqref{eq:ind:q:3}, and \eqref{eq:ind:q:3:b} hold with $q$ replaced with $q+1$.

\begin{proof}[Proof of Lemma~\ref{lem:w:q+1:bounds}]
From \eqref{eq:projection:kernel} it follows that $\norm{\PP_{q+1,k}}_{C^0 \to C^0} \lesssim 1$, and hence
\begin{align*}
\norm{w_{q+1}}_{C^0} 
&\lesssim \sum_{j,k} \norm{\tilde w_{q+1,j,k}} \notag\\
&\lesssim \sum_{j,k} {\bf 1}_{\supp (\chi_{j})}  \norm{a_{k,j}}_{C^0} \lesssim \sum_{j} {\bf 1}_{\supp (\chi_{j})}  \rho_j^{1/2}  \lesssim \delta_{q+1}^{1/2}
\end{align*}
in view of \eqref{eq:ak:def}, and the fact that $\rho_j \lesssim \delta_{q+1}^{1/2}$. Estimate \eqref{eq:v+u:q+1} follows from \eqref{eq:w:q+1}, the frequency support of $w_{q+1}$, and the inductive estimates \eqref{eq:ind:q:1}.

In order to estimate the material derivative $D_{t,q} = \partial_t + u_q \cdot\nabla$ of $w_{q+1}$, we note that  $D_{t,q} a_{k,j}(x,t) = 0$ since $\mathring R_{q,j}$ obey the transport equation for the vector field $u_q$, and similarly $D_{t,q} b_k(\lambda_{q+1} \Phi_j(x,t)) = 0$ since the phases $\Phi_j$ also obey the same transport equation.
We thus have that
\begin{align*}
D_{t,q} w_{q+1}  
&= D_{t,q} \left( \sum_{j,k} \PP_{q+1,k} \tilde w_{q+1,j,k} \right) \notag\\
&= \sum_{j,k} \PP_{q+1,k} \big( (\partial_t \chi_{j} ) a_{k,j} b_k(\lambda_{q+1} \Phi_j ) \big) + \sum_{j,k} [D_{t,q},\PP_{q+1,k}] \tilde w_{q+1,j,k} .
\end{align*}
Therefore, by appealing to the boundedness on $C^0$ of $\PP_{q+1,k}$, the definition of $\chi_{j}$ in \eqref{eq:chi:q+1:j:def}, and the commutator estimate \eqref{eq:comm:material:convolution} in Corollary~\ref{cor:Dt:commutator} with $s=0$, we arrive at
\begin{align*}
\norm{D_{t,q} w_{q+1}}_{C^0}
&\lesssim \tau_{q+1}^{-1} \sum_{j,k} {\bf 1}_{\supp (\chi_{j})} \norm{a_{k,j}}_{C^0} + \sum_{j,k} \norm{\nabla u_q}_{C^0} \norm{\tilde w_{q+1,j,k}}_{C^0} \notag\\
&\lesssim \tau_{q+1}^{-1} \sum_j {\bf 1}_{\supp (\chi_{j})}  \rho_j^{1/2} + \lambda_q^2 \delta_q^{1/2} \sum_j {\bf 1}_{\supp (\chi_{j})}  \rho_j^{1/2} \notag\\
&\lesssim \left(\tau_{q+1}^{-1} + \lambda_q^2 \delta_q^{1/2} \right) \delta_{q+1}^{1/2} \notag\\
&\lesssim \tau_{q+1}^{-1} \delta_{q+1}^{1/2}
\end{align*}
since $\rho_j \lesssim \delta_{q+1} $ and  by \eqref{eq:tau:q} we have $\tau_{q+1}^{-1} = \lambda_{q} \lambda_{q+1} \delta_{q}^{1/4} \delta_{q+1}^{1/4} \geq \lambda_q^{2} \delta_{q}^{1/2}$, where here we used the fact that $\beta<2$.

Using the inductive estimate \eqref{eq:ind:q:3} and the bound established above, since
\begin{align*}
D_{t,q+1} v_{q+1} = D_{t,q} (v_q + w_{q+1}) + \Lambda w_{q+1} \cdot \nabla (v_q + w_{q+1})
\end{align*}
we obtain
\begin{align*}
 \norm{D_{t,q+1} v_{q+1}}_{C^0} \lesssim \lambda_q^2 \delta_q + \tau_{q+1}^{-1} \delta_{q+1}^{1/2} + \lambda_{q+1}^2 \delta_{q+1} \lesssim \lambda_{q+1}^2 \delta_{q+1}.
\end{align*}

Similarly,
\begin{align*}
D_{t,q+1} u_{q+1} = D_{t,q} (u_q + \tilde P_{\approx \lambda_{q+1}} \Lambda w_{q+1}) + \Lambda w_{q+1} \cdot \nabla (u_q + \Lambda w_{q+1} ), 
\end{align*}
the inductive estimate \eqref{eq:ind:q:3:b}, and Lemma~\ref{lem:u:grad:commutator} (with $s=1$ and $\lambda = \lambda_{q+1}$) implies
\begin{align*}
 \norm{D_{t,q+1} u_{q+1}}_{C^0} 
 &\lesssim \lambda_q^3 \delta_q + \norm{\tilde P_{\approx \lambda_{q+1}} \Lambda (D_{t,q} w_{q+1})}_{C^0} + \norm{[u_q \cdot \nabla,\tilde P_{\approx \lambda_{q+1}}  \Lambda] w_{q+1}}_{C^0} + \lambda_{q+1}^3 \delta_{q+1}\\
 &\lesssim \lambda_q^3 \delta_q + \lambda_{q+1} \tau_{q+1}^{-1} \delta_{q+1}^{1/2} + \lambda_{q+1} \lambda_q^2 \delta_{q}^{1/2} \delta_{q+1}^{1/2} + \lambda_{q+1}^3 \delta_{q+1}
 \notag\\
&\lesssim \lambda_{q+1}^3 \delta_{q+1}
\end{align*}
which concludes the proof of the lemma.
\end{proof}

In estimating the Reynolds stress error, the following bounds concerning the derivatives of $a_{k,j}$ and $\psi_{q+1,j,k}$ are very useful.
\begin{lemma}
\label{lem:D:N:a:psi}
With $a_{k,j}$ as defined in \eqref{eq:ak:def} and $\psi_{q+1,j,k}$ as defined in \eqref{eq:psi:q+1:j:k:def}, we have that 
\begin{align}
\norm{D^N a_{k,j}}_{C^0(\supp \chi_{j})} &\lesssim \lambda_{q}^N \delta_{q+1}^{1/2}
\label{eq:D:N:ak}
\end{align}
for all $N \geq 0$, and 
\begin{align}
\norm{D^N \psi_{q+1,j,k}}_{C^0(\supp \chi_{j})} &\lesssim  \left(\tau_{q+1} \lambda_{q+1} \lambda_q^2 \delta_q^{1/2}\right)^N = \left(\lambda_q \delta_q^{1/4} \delta_{q+1}^{-1/4}\right)^N
\label{eq:D:N:psi:j:k}
\end{align}
for all $N \geq 1$, where the implied constant depends on $N$.
\end{lemma}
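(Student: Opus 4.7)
The plan is to combine Lagrangian flow bounds for the back-to-labels map $\Phi_j$ on the short time window $\supp \chi_j$ with Fa\`a di Bruno's chain rule. The key ingredient is the CFL smallness from \eqref{eq:CFL}, namely $\tau_{q+1}\|\nabla u_q\|_{C^0} \lesssim \lambda_0^{-1+\beta/2}\ll 1$, which both keeps $\Phi_j$ close to the identity and makes every ``subdominant'' term in each Fa\`a di Bruno expansion cheaper than the dominant one by a power of this small quantity.

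Differentiating the transport equation $(\partial_t + u_q\cdot\nabla)\Phi_j=0$ and applying Gr\"onwall inductively on the number of spatial derivatives yields $\|\nabla\Phi_j\|_{C^0(\supp\chi_j)}\lesssim 1$ and, for $N\geq 1$,
\begin{equation*}
\|D^N(\Phi_j-x)\|_{C^0(\supp\chi_j)}\lesssim \tau_{q+1}\|D^N u_q\|_{C^0} \lesssim \tau_{q+1}\lambda_q^{N+1}\delta_q^{1/2},
\end{equation*}
where in the last step I used Bernstein's inequality together with the frequency localization of $u_q$ at scale $\lambda_q$ and the inductive bound \eqref{eq:ind:q:1}. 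For \eqref{eq:D:N:psi:j:k}, Fa\`a di Bruno applied to $\psi_{q+1,j,k} = \exp(i\lambda_{q+1} k\cdot(\Phi_j-x))$ expresses $D^N \psi_{q+1,j,k}$ as a sum over partitions $\pi$ of $\{1,\ldots,N\}$ of terms $\psi_{q+1,j,k}\prod_{B\in\pi}(i\lambda_{q+1}k\cdot D^{|B|}(\Phi_j-x))$. Each such term is bounded by $\lambda_q^N (\lambda_{q+1}\tau_{q+1}\lambda_q \delta_q^{1/2})^{|\pi|} = \lambda_q^N (\delta_q^{1/4}\delta_{q+1}^{-1/4})^{|\pi|}$, upon using the definition \eqref{eq:tau:q} of $\tau_{q+1}$. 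Since $\delta_q\ge\delta_{q+1}$, the maximum over $1\le|\pi|\le N$ is attained at $|\pi|=N$ and equals $(\lambda_q\delta_q^{1/4}\delta_{q+1}^{-1/4})^N = (\tau_{q+1}\lambda_{q+1}\lambda_q^2\delta_q^{1/2})^N$, which is \eqref{eq:D:N:psi:j:k}.

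For \eqref{eq:D:N:ak}, because $\mathring R_{q,j}$ satisfies \eqref{eq:R:q:transported} one may write $\mathring R_{q,j}(x,t) = \mathring R_q(\Phi_j(x,t),j\tau_{q+1})$. The inductive frequency support of $\mathring R_q$ inside $\{|\xi|\le 4\lambda_q\}$ together with Bernstein and \eqref{eq:ind:q:2} gives $\|D^k \mathring R_q\|_{C^0}\lesssim \lambda_q^k \lambda_{q+1}\delta_{q+1}$. Applying Fa\`a di Bruno to the composition $\mathring R_q\circ\Phi_j$ and using the flow estimates above, the dominant term is $\|D^N\mathring R_q\|_{C^0}\|\nabla\Phi_j\|_{C^0}^N\lesssim \lambda_q^N\lambda_{q+1}\delta_{q+1}$, and every other term (trading one derivative of $\mathring R_q$ for a higher derivative of $\Phi_j$) is smaller by a factor $\lesssim \tau_{q+1}\lambda_q^2\delta_q^{1/2}\lesssim\lambda_0^{\beta/2-1}\ll 1$. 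Hence $\|D^N\mathring R_{q,j}\|_{C^0(\supp\chi_j)}\lesssim \lambda_q^N\lambda_{q+1}\delta_{q+1}$. Dividing by $\lambda_{q+1}\rho_j$ and using the lower bound $\rho_j\gtrsim\lambda_q\delta_{q+1}$ from \eqref{eq:rho_cond} yields $\|D^N(R_{q,j}/(\lambda_{q+1}\rho_j))\|_{C^0}\lesssim \lambda_q^{N-1}$ for $N\ge 1$. A final Fa\`a di Bruno application to $a_{k,j}=\rho_j^{1/2}\gamma_k(R_{q,j}/(\lambda_{q+1}\rho_j))$, together with the bound $\rho_j\lesssim\delta_{q+1}$, gives $\|D^N a_{k,j}\|_{C^0(\supp\chi_j)} \lesssim \rho_j^{1/2}\lambda_q^{\max(N-1,0)} \lesssim \lambda_q^N\delta_{q+1}^{1/2}$, as required.

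The main obstacle is purely combinatorial: tracking every term in each Fa\`a di Bruno expansion and verifying that the cost of swapping a derivative from the smooth object ($\mathring R_q$ or the exponential) to the flow $\Phi_j$ is a power of the small CFL ratio $\lambda_0^{\beta/2-1}$. Once this CFL comparison is in place, induction on $N$ closes both estimates with no further analytic input.
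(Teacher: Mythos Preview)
Your proof is correct and follows essentially the same route as the paper: transport estimates for $\Phi_j$ on the interval $\supp\chi_j$ combined with the chain rule (the paper cites Lemma~\ref{lem:composition}, you write out Fa\`a di Bruno explicitly), together with the frequency localization of $\mathring R_q$ and the two-sided bound $\rho_j\approx\delta_{q+1}$. The only cosmetic difference is that you make the intermediate bound $\|D^N\mathring R_{q,j}\|_{C^0}\lesssim\lambda_q^N\lambda_{q+1}\delta_{q+1}$ explicit, whereas the paper absorbs it into a single application of the chain rule; note also that the lower bound on $\rho_j$ is not quite a consequence of \eqref{eq:rho_cond} itself but of the discussion in Section~\ref{sec:w:q+1:construction} leading up to it.
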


\begin{proof}[Proof of Lemma~\ref{lem:D:N:a:psi}] 
Since $R_q$ is supported on frequencies less than $4 \lambda_{q}$, it follows from the chain rule estimate~\eqref{eq:chain} and the smoothness of the $\gamma_k$ functions that
\begin{align*}
\norm{D^N  a_{k,j}}_{C^0} 
&\lesssim \lambda_{q+1}^{-1} \rho_j^{-1/2}  \norm{D^N R_{q,j}}_{C^0} + \rho_j^{1/2-N} \lambda_{q+1}^{-N}  \norm{D R_{q+1,j}}_{C^0}^N\\
&\lesssim \delta_{q+1}^{ 1/2} \lambda_q^N   + \delta_{q+1}^{1/2} \lambda_{q+1}^{-N}  (\lambda_q  \lambda_{q+1})^N
\end{align*}
where we have also used that $\rho_j \lesssim \delta_{q+1}$.  This proves \eqref{eq:D:N:ak}.

In order to prove \eqref{eq:D:N:psi:j:k}, we appeal to the chain rule estimate~\eqref{eq:chain} and the transport estimates~\eqref{eq:Dphi:near:id}--\eqref{eq:Dphi:N} and find that
\begin{align*}
 \norm{D^N \psi_{q+1,j,k}}_{C^0(\supp \chi_{j})} 
 &\quad \lesssim \lambda_{q+1} \norm{D^{N-1}\left( D\Phi_j-\Id\right)}_{C^0(\supp \chi_{j})} + \lambda_{q+1}^{N} \norm{D \Phi_j  - \Id }_{C^0(\supp \chi_{j})}^N \\
 &\quad \lesssim \lambda_{q+1} \tau_{q+1} \norm{D^N u_q}_{C^0} e^{C \tau_{q+1} \|D u_q\|_{C^0}}
 + \lambda_{q+1}^{N} \left( \tau_{q+1} \norm{D u_q}_{C^0} e^{C \tau_{q+1} \|D u_q\|_{C^0}} \right)^N
\end{align*}
for a suitable constant $C$.
From \eqref{eq:CFL}, $\tau_{q+1} \norm{D u_q}_{C^0} \leq 1$, and thus we conclude that
\begin{align*}
\norm{D^N \psi_{q+1,j,k}}_{C^0(\supp \chi_{j})}
&\lesssim \lambda_{q+1} \tau_{q+1} \lambda_q^{N+1} \delta_q^{1/2} + \lambda_{q+1}^{N} \left( \tau_{q+1}  \lambda_q^2 \delta_q^{1/2} \right)^N \notag\\
&\lesssim \lambda_q^N \delta_q^{1/4} \delta_{q+1}^{-1/4} + \lambda_q^N \left(\delta_q^{1/4} \delta_{q+1}^{-1/4}\right)^N \notag\\
&\lesssim  \left(\lambda_q \delta_q^{1/4} \delta_{q+1}^{-1/4}\right)^N
\end{align*}
since $\delta_{q+1}\leq \delta_q$. This estimate shows that $\psi_{q+1,j,k}$ lives at spatial frequency $\lambda_q \delta_q^{1/4} \delta_{q+1}^{-1/4}$. 
\end{proof}

\section{The Reynolds stress error}
\label{sec:stress}

\subsection{Transport error}
\begin{lemma}[Transport error]
\label{lem:transp:error} For any $\eps>0$, if $\lambda_0$ is sufficiently large then for $R_T$ as defined in \eqref{eq:R:new:split}, we have that
\begin{align*}
\norm{R_{T}}_{C^0} &\leq \eps \lambda_{q+2} \delta_{q+2} \\
\norm{D_{t,q} R_T}_{C^0} &\leq \eps \lambda_{q+1}^2 \delta_{q+1}^{1/2} \lambda_{q+2} \delta_{q+2}\,.
\end{align*}
\end{lemma}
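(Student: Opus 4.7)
The plan is to write $R_T = \BB \, D_{t,q} w_{q+1}$ and then exploit two facts: (i) $D_{t,q}$ commutes (up to a controllable commutator) with the building blocks $a_{k,j}$ and $b_k(\lambda_{q+1}\Phi_j)$, since both are transported by $u_q$, and (ii) each Littlewood--Paley piece $\PP_{q+1,k}\tilde w_{q+1,j,k}$ is Fourier-localized at scale $\lambda_{q+1}$, so that $\BB$ gains a factor of $\lambda_{q+1}^{-1}$. Note that $w_{q+1}$ is divergence-free and has zero mean in $x$ (since the kernel of $\PP_{q+1,k}$ is supported away from the origin), hence so is $D_{t,q} w_{q+1}$; thus $R_T := \BB\,D_{t,q} w_{q+1}$ is well defined, is a symmetric trace-free $2\times 2$ matrix, and satisfies $\div R_T = D_{t,q} w_{q+1}$, in agreement with \eqref{eq:R:new:split}.

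Since $D_{t,q} a_{k,j} = D_{t,q} b_k(\lambda_{q+1}\Phi_j) = 0$, the same decomposition as in the proof of Lemma~\ref{lem:w:q+1:bounds} yields
\begin{align*}
D_{t,q} w_{q+1} = \sum_{j,k} (\partial_t \chi_j)\, \PP_{q+1,k}\!\left(a_{k,j}\,b_k(\lambda_{q+1}\Phi_j)\right) + \sum_{j,k} [D_{t,q},\PP_{q+1,k}]\,\tilde w_{q+1,j,k}
=: E_1 + E_2.
\end{align*}
Both $E_1$ and $E_2$ are Fourier-localized in the annulus $|\xi| \sim \lambda_{q+1}$, so by the standard mapping property of $\BB$ at this scale (cf.~Appendix~\ref{sec:BB}) we have $\|\BB E_i\|_{C^0} \lesssim \lambda_{q+1}^{-1} \|E_i\|_{C^0}$. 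For $E_1$, using $|\partial_t \chi_j| \lesssim \tau_{q+1}^{-1}$ and the amplitude estimate \eqref{eq:D:N:ak}, we get $\|E_1\|_{C^0} \lesssim \tau_{q+1}^{-1}\delta_{q+1}^{1/2}$. For $E_2$, we invoke the material-derivative commutator bound (Corollary~\ref{cor:Dt:commutator}), which gives $\|[D_{t,q},\PP_{q+1,k}]f\|_{C^0}\lesssim \|\nabla u_q\|_{C^0}\|f\|_{C^0}\lesssim \lambda_q^2\delta_q^{1/2}\delta_{q+1}^{1/2}$. Combining, and using \eqref{eq:tau:q} which implies $\tau_{q+1}^{-1}\geq \lambda_q^2\delta_q^{1/2}$, we obtain
\begin{align*}
\|R_T\|_{C^0} \lesssim \lambda_{q+1}^{-1}\tau_{q+1}^{-1}\delta_{q+1}^{1/2} = \lambda_q\,\delta_q^{1/4}\delta_{q+1}^{3/4} = \lambda_0^{2-3\beta/2}\,\lambda_q^{1-2\beta}.
\end{align*}
A direct computation using $\lambda_{q+2}\delta_{q+2}=\lambda_0^{4-4\beta}\lambda_q^{1-2\beta}$ gives the ratio $\lambda_0^{-2+5\beta/2}$, which can be made $\leq \eps$ by taking $\lambda_0$ large, precisely because $\beta<4/5$.

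For the material derivative bound, apply $D_{t,q}$ to $R_T = \BB\,D_{t,q} w_{q+1}$, writing
\begin{align*}
D_{t,q} R_T = \BB\, D_{t,q}^{2} w_{q+1} + [D_{t,q},\BB]\, D_{t,q} w_{q+1}.
\end{align*}
Differentiating the decomposition $D_{t,q} w_{q+1} = E_1 + E_2$ once more, and again using that $a_{k,j}$ and $b_k(\lambda_{q+1}\Phi_j)$ are transported by $u_q$, the only extra time derivatives hit $\partial_t\chi_j$ (producing $\tau_{q+1}^{-2}\delta_{q+1}^{1/2}$ after summing), or produce a second commutator $[D_{t,q},[D_{t,q},\PP_{q+1,k}]]$ whose action on $\tilde w_{q+1,j,k}$ is controlled by $\|\nabla u_q\|_{C^0}^{2} + \|D_{t,q}\nabla u_q\|_{C^0}$, which is dominated by $\tau_{q+1}^{-2}$ thanks to \eqref{eq:ind:q:3:b}. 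Hence $\|D_{t,q}^{2}w_{q+1}\|_{C^0}\lesssim \tau_{q+1}^{-2}\delta_{q+1}^{1/2}$. Applying $\BB$ (still localized at frequency $\lambda_{q+1}$) and the commutator estimate for $[D_{t,q},\BB]$ (which contributes another $\|\nabla u_q\|_{C^0}\cdot \lambda_{q+1}^{-1}$ factor and is therefore of lower order) yields
\begin{align*}
\|D_{t,q} R_T\|_{C^0} \lesssim \lambda_{q+1}^{-1}\tau_{q+1}^{-2}\delta_{q+1}^{1/2} = \lambda_q^{2}\lambda_{q+1}\delta_{q}^{1/2}\delta_{q+1}.
\end{align*}
Comparing with $\lambda_{q+1}^{2}\delta_{q+1}^{1/2}\lambda_{q+2}\delta_{q+2}$ gives a ratio $\lambda_0^{3(\beta-1)}$, which is $\leq \eps$ for large $\lambda_0$ since $\beta<1$.

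The main obstacle is the second step: obtaining the $D_{t,q} R_T$ bound requires carefully controlling iterated commutators $[D_{t,q},\PP_{q+1,k}]$ and $[D_{t,q},\BB]$, and verifying that the second material derivative of $w_{q+1}$ behaves like $\tau_{q+1}^{-2}\delta_{q+1}^{1/2}$. This is where the sharp choice \eqref{eq:tau:q} of $\tau_{q+1}$ is used, balancing the cost of the temporal cutoff against the commutator losses.
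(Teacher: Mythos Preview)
Your argument for the amplitude $\norm{R_T}_{C^0}$ is identical to the paper's: both decompose $D_{t,q}w_{q+1}=E_1+E_2$ (time-cutoff plus commutator $[D_{t,q},\PP_{q+1,k}]$), use the $\lambda_{q+1}^{-1}$ gain from $\BB$ on frequency-localized inputs, and arrive at $\lambda_{q+1}^{-1}\tau_{q+1}^{-1}\delta_{q+1}^{1/2}=\lambda_0^{-2+5\beta/2}\lambda_{q+2}\delta_{q+2}$.

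For $\norm{D_{t,q}R_T}_{C^0}$ the strategies diverge in presentation. You organize everything around the single claim $\norm{D_{t,q}^2 w_{q+1}}_{C^0}\lesssim\tau_{q+1}^{-2}\delta_{q+1}^{1/2}$, reducing to an iterated-commutator estimate
\[
\norm{[D_{t,q},[D_{t,q},\PP_{q+1,k}]]\,\tilde w_{q+1,j,k}}_{C^0}\lesssim\bigl(\norm{\nabla u_q}_{C^0}^2+\norm{\nabla D_{t,q}u_q}_{C^0}\bigr)\norm{\tilde w_{q+1,j,k}}_{C^0}.
\]
This bound is correct (one checks it by writing $[u_q\cdot\nabla,\PP_{q+1,k}]f=-\int\nabla K(y)\cdot(u_q(x)-u_q(x-y))f(x-y)\,dy$ and differentiating once more, using the kernel bound \eqref{eq:K:convolution:assumption} with $|a|=|b|$ up to $2$), and it yields the slightly sharper ratio $\lambda_0^{3(\beta-1)}$. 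However, you have only asserted it: Corollary~\ref{cor:Dt:commutator} handles a single commutator of $D_{t,q}$ with a \emph{convolution} operator, whereas $[D_{t,q},\PP_{q+1,k}]$ is no longer a convolution, so a second application does not follow formally. The paper sidesteps this by fully expanding $D_{t,q}\bigl(u_q\cdot\nabla\PP_{q+1,k}\tilde w-\PP_{q+1,k}(u_q\cdot\nabla\tilde w)\bigr)$ into eight explicit pieces (their term $T_3$) and bounding each with Corollary~\ref{cor:Dt:commutator} and the inductive estimates; this gives the slightly weaker ratio $\lambda_0^{-3+7\beta/2}$, still sufficient since $\beta<4/5<6/7$. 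Either route works; yours is cleaner but needs the iterated-commutator lemma stated and proved.

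One minor correction: $D_{t,q}w_{q+1}$ is \emph{not} divergence-free, since $\div(D_{t,q}w_{q+1})=\partial_i u_q^j\,\partial_j w_{q+1}^i$. This is harmless---$\BB$ already incorporates the Leray projector, and the resulting gradient part is absorbed into $\nabla\tilde p_{q+1}$ in \eqref{eq:R:new:split}---but your opening parenthetical should be amended.
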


\subsubsection{Amplitude of the Transport error}

By definition, we have that
\begin{align}
R_{T}&=\BB \left( D_{t,q} w_{q+1} \right)
\notag \\
&=\BB\left(\sum_{j,k} \PP_{q+1,k} \big( (\partial_t \chi_{j} ) a_{k,j} b_k(\lambda_{q+1} \Phi_j ) \big) + \sum_{j,k} [D_{t,q},\PP_{q+1,k}] \tilde w_{q+1,j,k}\right) 
\notag\\
&=\BB \tilde P_{\approx \lambda_{q+1}} \left(\sum_{j,k} \PP_{q+1,k} \big( (\partial_t \chi_{j} ) a_{k,j} b_k(\lambda_{q+1} \Phi_j ) \big) + \sum_{j,k} [u_q \cdot \nabla,\PP_{q+1,k}] \tilde w_{q+1,j,k}\right) 
\end{align}
where in the last equality we used the compact support of $u_q$ to conclude that $u_q \cdot \nabla \PP_{q+1,k} = \tilde P_{\approx \lambda_{q+1}} \left( u_q \cdot \nabla \PP_{q+1,k}\right)$.
Using Lemma~\ref{lem:u:grad:commutator} we obtain
\begin{align}
 \norm{R_T}_{C^0} 
 &\lesssim \lambda_{q+1}^{-1} \sum_{j,k} \norm{(\partial_t \chi_{j} ) a_{k,j} b_k(\lambda_{q+1} \Phi_j )}_{C^0} + \lambda_{q+1}^{-1} \sum_{j,k} \norm{\nabla u_q}_{C^0} \norm{\chi_j a_{k,j} b_k(\lambda_{q+1} \Phi_j )}_{C^0} \notag\\
 &\lesssim \lambda_{q+1}^{-1} \tau_{q+1}^{-1} \delta_{q+1}^{1/2} + \lambda_{q+1}^{-1} \lambda_q^{2} \delta_q^{1/2} \delta_{q+1}^{1/2} \notag\\
 &\lesssim \lambda_{q+1}^{-1} \tau_{q+1}^{-1} \delta_{q+1}^{1/2}\notag\\
   &= \lambda_q \delta_q^{1/4} \delta_{q+1}^{3/4} \notag\\
 &=  \lambda_0^{-2+\frac{5\beta}{2}}\lambda_{q+2} \delta_{q+2}.
 \label{eq:sharp:1}
\end{align}
Assuming $\lambda_0$ is sufficiently large and $\beta<\sfrac 45$ we obtain our claim.

\subsubsection{Material derivative of the Transport error}

Using the frequency support of $u_q$ and $w_{q+1}$ we write
\begin{align}
D_{t,q}R_{T}
&=[D_{t , q}, \BB \tilde P_{\approx_{\lambda_{q+1}}}] D_{t,q} w_{q+1} +\BB \tilde P_{\approx_{\lambda_{q+1}}}D_{t,q}\left(\sum_{j,k} \PP_{q+1,k} \big( (\partial_t \chi_{j} ) a_{k,j} b_k(\lambda_{q+1} \Phi_j ) \big)\right) \notag\\
&\quad   +\BB \tilde P_{\approx_{\lambda_{q+1}}}D_{t,q} \left( \sum_{j,k} [D_{t,q},\PP_{q+1,k}] \tilde w_{q+1,j,k}\right) \notag \\
&=T_1+T_2+T_3
\label{eq:RT:Dt}
\end{align}

The first term in \eqref{eq:RT:Dt} is bounded directly using Corollary~\ref{cor:Dt:commutator} and the bound \eqref{eq:Dt:w:q+1} as
\begin{align}
\norm{T_1}_{C^0} 
&\lesssim \lambda_{q+1}^{-1} \norm{\nabla u_q}_{C^0} \norm{D_{t,q} w_{q+1}}_{C^0} \notag\\
&\lesssim \lambda_{q+1}^{-1} \lambda_q^2 \delta_q^{1/2} \tau_{q+1}^{-1} \delta_{q+1}^{1/2} \notag\\
&= \lambda_q^3 \delta_{q+1}^{3/4} \delta_q^{3/4}. \label{eq:T_1_est}
\end{align}

We decompose the second term in \eqref{eq:RT:Dt} as
\begin{align*}
T_2=
&\BB \tilde P_{\approx_{\lambda_{q+1}}} \left(\sum_{j,k}[D_{t,q},\PP_{q+1,k}]\big( (\partial_t \chi_{j} ) a_{k,j} b_k(\lambda_{q+1} \Phi_j ) \big)+\sum_{j,k}\PP_{q+1,k}(\partial_t^2 \chi_{j} ) a_{k,j} b_k(\lambda_{q+1} \Phi_j )\right)
\end{align*}
which allows it to be estimated by
\begin{align}
\norm{T_2}_{C^0} 
&\lesssim  \lambda_{q+1}^{-1} \sum_{j,k} \left( \norm{\nabla u_q}_{C^0} \norm{(\partial_t \chi_{j} ) a_{k,j} b_k(\lambda_{q+1} \Phi_j )}_{C^0} + \norm{(\partial_t^2 \chi_{j} ) a_{k,j} b_k(\lambda_{q+1} \Phi_j )}_{C^0} \right) \notag\\
&\lesssim \lambda_{q+1}^{-1} \left( \lambda_q^2 \delta_{q}^{1/2} \tau_{q+1}^{-1} \delta_{q+1}^{1/2} + \tau_{q+1}^{-2} \delta_{q+1}^{1/2} \right) \notag\\
& \lesssim \lambda_{q+1}^{-1}   \tau_{q+1}^{-2} \delta_{q+1}^{1/2}  
\notag\\
&= \lambda_q^2\lambda_{q+1} \delta_{q}^{1/2} \delta_{q+1}.
\label{eq:T_2_est}
\end{align}
Next,
\begin{align*}
T_3
&=\BB \tilde P_{\approx_{\lambda_{q+1}}} D_{t,q} \left( \sum_{j,k}u_q\cdot \nabla\PP_{q+1,k} \tilde w_{q+1,j,k}-\sum_{j,k}\PP_{q+1,k} \left(u_q\cdot \nabla\tilde w_{q+1,j,k}\right)\right)
\notag \\
&=\BB\tilde P_{\approx_{\lambda_{q+1}}} 
\left(\sum_{j,k}(D_{t,q}u_q)\cdot \nabla\left(\PP_{q+1,k} \tilde w_{q+1,j,k}\right)+\sum_{j,k} u_q\cdot \nabla  \left([D_{t,q},\PP_{q+1,k}] \tilde w_{q+1,j,k}\right)\right)
\notag \\
&\quad+\BB\tilde P_{\approx_{\lambda_{q+1}}}
\left(\sum_{j,k}u_q\cdot \nabla  \left(\PP_{q+1,k} \big( (\partial_t \chi_{j} ) a_{k,j} b_k(\lambda_{q+1} \Phi_j ) \big)\right) - \sum_{j,k} \left((u_q \cdot \nabla) u_q \right) \cdot \nabla  \PP_{q+1,k} \tilde w_{q+1,j,k}   \right)
\notag \\
&\quad-\BB\tilde P_{\approx_{\lambda_{q+1}}}
\left(\sum_{j,k}[D_{t,q},\PP_{q+1,k}] \left(u_q\cdot \nabla\tilde w_{q+1,j,k}\right)+\sum_{j,k}\PP_{q+1,k}\left((D_{t,q}u_q)\cdot \nabla\tilde w_{q+1,j,k}\right)\right)
\notag \\
&\quad-\BB\tilde P_{\approx_{\lambda_{q+1}}}
\left(\sum_{j,k}\PP_{q+1,k}\left(u_q\cdot \nabla\big( (\partial_t \chi_{j} ) a_{k,j} b_k(\lambda_{q+1} \Phi_j ) \big)\right) - \sum_{j,k} \PP_{q+1,k} \left( \left( (u_q \cdot \nabla) u_q \right) \cdot \nabla \tilde w_{q+1,j,k} \right)\right).
\end{align*}
In order to estimate $T_3$ we appeal to the inductive bound \eqref{eq:ind:q:3:b},  Lemma~\ref{lem:D:N:a:psi},   the commutator estimate in Corollary~\ref{cor:Dt:commutator} (with $s = 0$ and $\lambda = \lambda_{q+1}$), 
and to the fact that $\tau_{q+1} \lambda_{q}^2 \delta_q^{1/2} \lesssim 1$ and the estimate \eqref{eq:Dphi:near:id}, imply 
\[
\norm{\tilde w_{q+1,j,k}}_{C^1} \lesssim \norm{a_{k,j}}_{C^1} + \norm{a_{k,j}}_{C^0} \lambda_{q+1} \norm{\nabla \Phi_j}_{C^0} \lesssim \lambda_q \delta_{q+1}^{1/2} + \lambda_{q+1} \delta_{q+1}^{1/2} \lesssim \lambda_{q+1} \delta_{q+1}^{1/2}.
\] 
All these yield
\begin{align}
\norm{T_3}_{C^0}
&\lesssim 
\lambda_{q+1}^{-1} \sum_{j,k} \left( \norm{D_{t,q} u_q}_{C^0} \lambda_{q+1} \norm{\tilde w_{q+1,j,k}}_{C^0} + \norm{u_q}_{C^0} \lambda_{q+1} \norm{u_q}_{C^1} \norm{\tilde w_{q+1,j,k}}_{C^0} \right)
\notag\\
&\quad 
+ \lambda_{q+1}^{-1} \sum_{j,k}  \left( \norm{u_q}_{C^0} \lambda_{q+1} \tau_{q+1}^{-1} \norm{a_{k,j}}_{C^0(\supp \chi_j)} 
+ \norm{u_q}_{C^0} \norm{u_q}_{C^1} \lambda_{q+1}  \norm{\tilde w_{q+1,j,k}}_{C^0} \right)
\notag\\
&\quad + \lambda_{q+1}^{-1} \sum_{j,k}\left( \norm{u_q}_{C^1} \norm{u_q}_{C^0} \norm{\tilde w_{q+1,j,k}}_{C^1} + \norm{D_{t,q} u_q}_{C^0} \norm{\tilde w_{q+1,j,k}}_{C^1}\right)
\notag\\
&\quad + \lambda_{q+1}^{-1} \sum_{j,k}\left( \norm{u_q}_{C^0} \tau_{q+1}^{-1} \norm{a_{k,j} b_k(\lambda_{q+1} \Phi_j)}_{C^1} + \norm{u_q}_{C^0} \lambda_{q} \norm{u_q}_{C^1} \norm{\tilde w_{q+1,j,k}}_{C^1}\right) \notag
\\
&\lesssim    \lambda_{q}^3 \delta_q   \delta_{q+1}^{1/2} + \lambda_q  \delta_q^{1/2} \tau_{q+1}^{-1} \delta_{q+1}^{1/2}  \notag\\
&\lesssim \lambda_q  \delta_q^{1/2} \tau_{q+1}^{-1} \delta_{q+1}^{1/2} \notag\\
&= \lambda_q^2\lambda_{q+1} \delta_q^{3/4}\delta_{q+1}^{3/4}. \label{eq:T_3_est}
\end{align}

Combining \eqref{eq:T_1_est}, \eqref{eq:T_2_est} and \eqref{eq:T_3_est} we obtain
\begin{align*}
\norm{D_{t,q}R_T}_{C^0}\lesssim &\lambda_q^3\delta_q^{3/4}\delta_{q+1}^{3/4}+\lambda_q^2\lambda_{q+1} \delta_{q}^{1/2}\delta_{q+1}+\lambda_q^2\lambda_{q+1}\delta_q^{3/4}\delta_{q+1}^{3/4}\\
 \lesssim &
\lambda_q^2\lambda_{q+1}\delta_q^{3/4}\delta_{q+1}^{3/4}\\
=&\lambda_0^{-3+7\beta/2}\lambda_{q+1}^{2}\ \delta_{q+1}^{1/2} \lambda_{q+2} \delta_{q+2},.
\end{align*}
This completes the proof of Lemma~\ref{lem:transp:error}.

\subsection{Nash error}

\begin{lemma}[Nash error]
\label{lem:Nash:error}
For any $\eps>0$, if $\lambda_0$ is sufficiently large then for $R_N$ as defined in \eqref{eq:R:new:split}, we have that
\begin{align}
\norm{R_{N}}_{C^0} &\leq \eps \lambda_{q+2} \delta_{q+2} ,\notag \\
\norm{D_{t,q} R_N}_{C^0} &\leq \eps \lambda_{q+1}^2\delta_{q+1}^{1/2} \lambda_{q+2} \delta_{q+2}\,.\label{eq:Dt_Nash}
\end{align}
\end{lemma}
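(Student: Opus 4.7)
My plan is to mirror the transport error estimate (Lemma~\ref{lem:transp:error}) very closely. By construction
\[
R_N = \BB\bigl(\Lambda w_{q+1}\cdot\nabla v_q - (\nabla v_q)^T\Lambda w_{q+1} - (\nabla u_q)^T w_{q+1}\bigr),
\]
and since $v_q,u_q$ have frequency support in $\{|\xi|\leq 2\lambda_q\}$ while $w_{q+1}$ is supported in the annulus $\{\lambda_{q+1}/2\leq|\xi|\leq 2\lambda_{q+1}\}$, the integrand is frequency-localized near $\lambda_{q+1}$. I therefore insert the localizer $\tilde P_{\approx\lambda_{q+1}}$ at no cost, writing $R_N=\BB\tilde P_{\approx\lambda_{q+1}}(\cdots)$, with $\BB\tilde P_{\approx\lambda_{q+1}}\colon C^0\to C^0$ bounded by $\lambda_{q+1}^{-1}$. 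The inductive bounds give $\|v_q\|_{C^1},\|u_q\|_{C^0}\lesssim \lambda_q\delta_q^{1/2}$, and $\|\nabla u_q\|_{C^0}\lesssim\lambda_q^2\delta_q^{1/2}$ by frequency localization; combined with Lemma~\ref{lem:w:q+1:bounds}, which yields $\|w_{q+1}\|_{C^0}\lesssim\delta_{q+1}^{1/2}$ and $\|\Lambda w_{q+1}\|_{C^0}\lesssim\lambda_{q+1}\delta_{q+1}^{1/2}$, a direct Leibniz/H\"older estimate produces a first pass $\|R_N\|_{C^0}\lesssim\lambda_q\delta_q^{1/2}\delta_{q+1}^{1/2}$.

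For the material derivative I would split
\[
D_{t,q}R_N=[D_{t,q},\BB\tilde P_{\approx\lambda_{q+1}}](\text{integrand})+\BB\tilde P_{\approx\lambda_{q+1}}D_{t,q}(\text{integrand}),
\]
controlling the commutator by Corollary~\ref{cor:Dt:commutator}, which costs an extra $\|\nabla u_q\|_{C^0}$ factor relative to $\lambda_{q+1}^{-1}\|\text{integrand}\|_{C^0}$. The second piece is handled by distributing $D_{t,q}$ with Leibniz: $D_{t,q}w_{q+1}$ is provided by Lemma~\ref{lem:w:q+1:bounds}, $D_{t,q}v_q,D_{t,q}u_q$ by~\eqref{eq:ind:q:3}--\eqref{eq:ind:q:3:b}, spatial derivatives are transferred via the standard commutator $[D_{t,q},\partial_i]=-(\partial_i u_q)\cdot\nabla$, and $D_{t,q}\Lambda w_{q+1}$ is rewritten as $\Lambda(D_{t,q}w_{q+1})+[D_{t,q},\Lambda]w_{q+1}$, the commutator again costing a $\|\nabla u_q\|_{C^0}$ factor. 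Summing these contributions in the same spirit as the final step of the transport estimate then delivers the claimed bound~\eqref{eq:Dt_Nash}.

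The main obstacle, where most of the real work lies, is sharpening the $C^0$ bound so that it suffices in the full regime $\beta<4/5$: converting the direct estimate to powers of $\lambda_0$, the ratio $\|R_N\|_{C^0}/(\lambda_{q+2}\delta_{q+2})$ equals $\lambda_0^{-2+3\beta}$, which is small only for $\beta<2/3$. To recover the remaining $\lambda_0^{-\beta/2}$ smallness, I would exploit the two-dimensional vorticity structure: the identity $\Lambda w_{q+1}\cdot\nabla v_q - (\nabla v_q)^T\Lambda w_{q+1}=-\theta_q(\Lambda w_{q+1})^\perp$ with $\theta_q=-\nabla^\perp\cdot v_q$, together with the analogous manipulation of $-(\nabla u_q)^T w_{q+1}$ via the relation $u_q=\Lambda v_q$ and the Calder\'on commutator $[\Lambda,w_{q+1}^j]$, rewrites the integrand of $R_N$ as a paraproduct of a low-frequency factor with a single high-frequency factor at scale $\lambda_{q+1}$. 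I would then substitute the explicit form $w_{q+1}=\sum_{j,k}\PP_{q+1,k}(a_{k,j}b_k(\lambda_{q+1}\Phi_j))$ and commute $\BB\tilde P_{\approx\lambda_{q+1}}$ past the slowly varying amplitudes $a_{k,j}\psi_{q+1,j,k}$, which by Lemma~\ref{lem:D:N:a:psi} sit at frequency $\lambda_q\delta_q^{1/4}\delta_{q+1}^{-1/4}\ll\lambda_{q+1}$. A stationary-phase-type commutator expansion should then produce the extra factor $(\delta_{q+1}/\delta_q)^{1/4}=\lambda_0^{-\beta/2}$, sharpening the bound to the transport level $\lambda_q\delta_q^{1/4}\delta_{q+1}^{3/4}$ and giving the ratio $\lambda_0^{-2+5\beta/2}$, which is small precisely when $\beta<4/5$.
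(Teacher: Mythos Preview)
Your outline is largely on target --- the decomposition $R_N=N_1+N_2$ via the vorticity identity
\[
\Lambda w_{q+1}\cdot\nabla v_q-(\nabla v_q)^T\Lambda w_{q+1}=(\nabla^\perp\cdot v_q)\,(\Lambda w_{q+1})^\perp
\]
is exactly what the paper uses, the direct bound $\lambda_q\delta_q^{1/2}\delta_{q+1}^{1/2}$ and its $\beta<2/3$ limitation are correctly identified, and the material derivative argument you sketch matches the paper's. But there is a genuine gap in the crucial sharpening step. You propose to ``commute $\BB\tilde P_{\approx\lambda_{q+1}}$ past the slowly varying amplitudes'' and expect a gain from the commutator. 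A commutator expansion of $\BB(g\cdot h)$ into $g\cdot\BB h+[\BB,g]h$ only helps if the \emph{principal} term $g\cdot\BB h$ is smaller or vanishes; otherwise the commutator term is just an additional contribution of the same order. You never say why the principal term would be small, and without that the expansion gains nothing.

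The mechanism the paper uses (and which your outline is missing) is the algebraic identity \eqref{eq:bk-perp}: since $b_k^\perp(\xi)=-\nabla_\xi c_k(\xi)$, one has
\[
(\Lambda w_{q+1})^\perp = -\tfrac{1}{\lambda_{q+1}}\nabla\Bigl(\sum_{j,k}\Lambda\PP_{q+1,k}(\chi_j a_{k,j}\psi_{q+1,j,k}\,c_k(\lambda_{q+1}x))\Bigr)
+\tfrac{1}{\lambda_{q+1}}\sum_{j,k}\Lambda\PP_{q+1,k}\bigl(\chi_j\,\nabla(a_{k,j}\psi_{q+1,j,k})\,c_k(\lambda_{q+1}x)\bigr).
\]
Multiplying by the scalar $\nabla^\perp\cdot v_q$ and applying $\BB$ (which contains the Leray projector), the full gradient is turned into $\nabla(\nabla^\perp\cdot v_q)$ times the high-frequency scalar, and now both pieces carry an explicit $\lambda_{q+1}^{-1}$ together with either a derivative on $v_q$ or a derivative on the amplitude $a_{k,j}\psi_{q+1,j,k}$. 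This is what produces the improved bound $\|R_N\|_{C^0}\lesssim\lambda_q^2\lambda_{q+1}^{-1}\delta_q^{3/4}\delta_{q+1}^{1/4}=\lambda_0^{-3+7\beta/2}\lambda_{q+2}\delta_{q+2}$, in fact valid for $\beta<6/7$. Your hoped-for endpoint $\lambda_q\delta_q^{1/4}\delta_{q+1}^{3/4}$ is not what the argument actually yields; also note that $N_1=\BB((\nabla u_q)^Tw_{q+1})$ already satisfies the sharp bound directly (it is $\lesssim\lambda_q^2\delta_q^{1/2}\delta_{q+1}^{1/2}/\lambda_{q+1}$), so no Calder\'on commutator manipulation is needed there.
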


\subsubsection{Amplitude of the Nash error}
We recall from \eqref{eq:R:new:split} and the definition of $\BB$ that we may write
\begin{align}
R_N 
&=- \BB \left( (\nabla u_q)^T \cdot w_{q+1}  \right) + \BB \left(\Lambda w_{q+1} \cdot \nabla v_q - (\nabla v_q)^T \cdot \Lambda w_{q+1}\right) \notag\\
&= - \BB \left( (\nabla u_q)^T \cdot w_{q+1}  \right)  + \BB \left((\nabla^\perp \cdot v_q) \Lambda w_{q+1}^\perp \right) \notag\\
&=: N_1 + N_2.
\label{eq:RN:def}
\end{align}
The bound on $N_1$ is direct. Recalling the definition of $w_{q+1}$, and recalling that $u_q$ is supported on frequencies $|\xi| \leq 2\lambda_q \leq \lambda_{q+1}/8$, upon applying \eqref{eq:BB:L:infty} with $\lambda = \lambda_{q+1}$ (which is inherent in $\PP_{q+1,k}$), we obtain
\begin{align*}
\norm{N_1}_{C^0} 
&\lesssim \frac{\norm{u_q}_{C^1} }{\lambda_{q+1}} \sum_{j,k} \norm{\tilde w_{q+1,j,k}}_{C^0} \notag\\
&\lesssim \frac{\lambda_q^2 \delta_q^{1/2} \delta_{q+1}^{1/2}}{\lambda_{q+1}}.
\end{align*}

It is convenient to rewrite the term $N_2$, exploring the special structure of the perturbations $w_{q+1}$.
We note that since $b_k^\perp(\xi) = i (k^\perp)^\perp e^{i k\cdot \xi} = - i k e^{i k \cdot \xi} = - \nabla_\xi c_k(\xi)$, we may write
\begin{align*}
\Lambda w_{q+1}^\perp 
&= \sum_{j,k} \Lambda \PP_{q+1,k} \big(\chi_j a_{k,j} \psi_{q+1,j,k} b_k(\lambda_{q+1} x)^\perp\big) \notag\\
&= -\frac{1}{\lambda_{q+1}} \sum_{j,k} \Lambda \PP_{q+1,k} \big(\chi_j  a_{k,j} \psi_{q+1,j,k} \nabla c_k(\lambda_{q+1} x)  \big) \notag\\
&= -\frac{1}{\lambda_{q+1}} \nabla \sum_{j,k} \Lambda \PP_{q+1,k} \big(\chi_j  a_{k,j} \psi_{q+1,j,k}c_k(\lambda_{q+1} x)  \big)   + \frac{1}{\lambda_{q+1}}  \sum_{j,k} \Lambda \PP_{q+1,k} \big(\chi_j  \nabla \big( a_{k,j} \psi_{q+1,j,k} \big) c_k(\lambda_{q+1} x)  \big). 
\end{align*}
Therefore, recalling that $\BB$ has incorporated into it the Leray projector $\PP$, we have that
\begin{align}
N_2 
&= \frac{1}{\lambda_{q+1}} \BB\left( \nabla (\nabla^\perp \cdot v_q) \sum_{j,k} \Lambda \PP_{q+1,k} \big(\chi_j  a_{k,j}  c_k(\lambda_{q+1} \Phi_j )  \big) \right)\notag\\
&\quad +\frac{1}{\lambda_{q+1}}  \BB\left( (\nabla^\perp \cdot v_q) \sum_{j,k} \Lambda \PP_{q+1,k} \big( \nabla \big(\chi_j  a_{k,j} \psi_{q+1,j,k} \big) c_k(\lambda_{q+1} x)  \big) \right).
\label{eq:N2:rewriting}
\end{align}
In order to bound $N_2$ we again use  \eqref{eq:BB:L:infty} with $\lambda = \lambda_{q+1}$ and obtain 
\begin{align*}
\norm{N_2}_{C^0}
&\lesssim \frac{1}{\lambda_{q+1}^2} \norm{v_q}_{C^2} \sum_{j,k} \norm{\Lambda P_{\approx \lambda_{q+1}} (\chi_j  a_{k,j} c_k(\lambda_{q+1} \Phi_j))}_{C^0} 
\notag\\
&\quad + \frac{1}{\lambda_{q+1}^2} \norm{v_q}_{C^1} \sum_{j,k} \norm{\Lambda P_{\approx \lambda_{q+1}} 
\Big(\chi_j  \nabla a_{k,j} c_k(\lambda_{q+1} \Phi_j) + \chi_j  a_{k,j} \nabla \psi_{q+1,j,k} c_k(\lambda_{q+1} x) \Big)}_{C^0} \notag\\
&\lesssim \frac{1}{\lambda_{q+1}^2} \lambda_q^2 \delta_{q}^{1/2} \lambda_{q+1} \delta_{q+1}^{1/2}
  + \frac{1}{\lambda_{q+1}^2} \lambda_q \delta_{q}^{1/2} \lambda_{q+1} \left(\lambda_q \delta_{q+1}^{1/2} +  \delta_{q+1}^{1/2}  \lambda_q\delta_q^{1/4}\delta_{q+1}^{-1/4} \right) \\
&\lesssim \frac{\lambda_q^2 \delta_q^{1/2} \delta_{q+1}^{1/2}}{\lambda_{q+1}} + 
\frac{\lambda_q^2}{\lambda_{q+1}} \delta_{q}^{3/4}  \delta_{q+1}^{1/4} 
\end{align*}
where in the second last inequality we have used Lemma~\ref{lem:D:N:a:psi}.  We see that the first terms in the above bound obeys the same estimate as $N_1$. Then again using $\delta_{q+1}<\delta_q$ we obtain
\begin{align*}
\norm{R_N}_{C^0}\lesssim \frac{\lambda_q^2}{\lambda_{q+1}} \delta_{q}^{3/4}  \delta_{q+1}^{1/4} = \lambda_0^{-3+\frac{7 \beta}{2}}\lambda_{q+2} \delta_{q+2}
\end{align*}
Thus we obtain the desired estimate so long as $\beta<\frac 67$ and $\lambda_0$ is sufficiently large.

\subsubsection{Material derivative of the Nash error}
Recall that $u_q$ has frequency support inside of the ball of radius $2 \lambda_q \leq \lambda_{q+1}/8$, and therefore
\begin{align*}
N_1= \BB \left( (\nabla u_q)^T \cdot w_{q+1}  \right) = \BB \tilde P_{\approx \lambda_{q+1}}  \left( (\nabla u_q)^T \cdot w_{q+1}  \right)
\end{align*}
where we denote by $\tilde P_{\approx \lambda_{q+1}}$ the Fourier multiplier operator whose symbol is supported on frequencies $\{\xi \colon \lambda_{q+1}/4 \leq |\xi| \leq 4 \lambda_{q+1}\}$, and is identically $1$ on the annulus $\{\xi \colon 3 \lambda_{q+1}/8 \leq |\xi| \leq 3 \lambda_{q+1} \}$. Therefore, appealing to Corollary~\ref{cor:Dt:commutator} (with $s=-1$ and $\lambda = \lambda_{q+1}$), we have that 
\begin{align*}
\norm{D_{t,q} N_1}_{C^0} 
&\leq \norm{\BB \tilde P_{\approx \lambda_{q+1}}  D_{t,q} \left( (\nabla u_q)^T \cdot w_{q+1} \right)}_{C^0} + \norm{[D_{t,q}, \BB \tilde P_{\approx \lambda_{q+1}}] \left(   (\nabla u_q)^T \cdot w_{q+1} \right)}_{C^0} \notag\\
&\lesssim \frac{1}{\lambda_{q+1}} \norm{D_{t,q} \left( (\nabla u_q)^T \cdot w_{q+1} \right)}_{C^0}
+ \frac{1}{\lambda_{q+1}} \norm{\nabla u_q}_{C^0} \norm{(\nabla u_q)^T \cdot w_{q+1}}_{C^0} \notag\\
&\lesssim  \frac{1}{\lambda_{q+1}} \left( \norm{D_{t,q}  (\nabla u_q)^T}_{C^0} \norm{w_{q+1} }_{C^0} 
+ \norm{\nabla u_q}_{C^0} \norm{D_{t,q} w_{q+1}}_{C^0}
+  \norm{\nabla u_q}_{C^0}^2 \norm{w_{q+1}}_{C^0} \right).
\end{align*}
Using the inductive hypothesis \eqref{eq:ind:q:3:b}, we have that 
\begin{align*}
 \norm{D_{t,q}  (\nabla u_q)}_{C_0} 
 &\lesssim \norm{u_q}_{C^1}^2 + \norm{D_{t,q} u_q}_{C^1} \lesssim \lambda_q^4 \delta_q.
\end{align*}
From Lemma~\ref{lem:w:q+1:bounds},  we conclude that
\begin{align*}
\norm{D_{t,q} N_1}_{C^0} 
&\lesssim \frac{1}{\lambda_{q+1}} 
\left(  \lambda_q^4 \delta_q \delta_{q+1}^{1/2} 
+ \lambda_q^2 \delta_q^{1/2} \tau_{q+1}^{-1} \delta_{q+1}^{1/2} \right)
\\
&\lesssim \lambda_{q+1}^{-1} \lambda_q^2 \delta_q^{1/2} \tau_{q+1}^{-1} \delta_{q+1}^{1/2} \\
&= \lambda_q^3\delta_q^{3/4}\delta_{q+1}^{3/4}\\
&= \lambda_0^{-4+\frac{7\beta}{2}}\lambda_{q+1}^2 \delta_{q+1}^{1/2} \lambda_{q+2} \delta_{q+2} \,.
\end{align*}

In order to estimate the material derivative of $N_2$,  we recall \eqref{eq:N2:rewriting}, and 
as above,  using the compact support of $v_q$, we find that
\begin{align*}
- \lambda_{q+1} D_{t,q} N_2 
&= D_{t,q} \BB \tilde P_{\approx \lambda_{q+1}} \left( \nabla (\nabla^\perp \cdot v_q) \sum_{j,k} \Lambda \PP_{q+1,k} \big(\chi_j  a_{k,j}  c_k(\lambda_{q+1} \Phi_j )  \big) \right)\notag\\
&\quad + D_{t,q} \BB \tilde P_{\approx \lambda_{q+1}}  \left( (\nabla^\perp \cdot v_q) \sum_{j,k} \Lambda \PP_{q+1,k} \big(\chi_j  \nabla \big( a_{k,j} \psi_{q+1,j,k} \big) c_k(\lambda_{q+1} x)  \big) \right) \notag\\
&= \left(\BB \tilde P_{\approx \lambda_{q+1}} D_{t,q} + [D_{t,q} ,\BB \tilde P_{\approx \lambda_{q+1}}] \right)  \left( \nabla (\nabla^\perp \cdot v_q) \sum_{j,k} \Lambda \PP_{q+1,k} \big(\chi_j  a_{k,j}  c_k(\lambda_{q+1} \Phi_j )  \big) \right)\notag\\
&\quad + \left(\BB \tilde P_{\approx \lambda_{q+1}} D_{t,q} + [D_{t,q} ,\BB \tilde P_{\approx \lambda_{q+1}}] \right) \left( (\nabla^\perp \cdot v_q) \sum_{j,k} \Lambda \PP_{q+1,k} \big( \nabla \big(\chi_j  a_{k,j} \psi_{q+1,j,k} \big) c_k(\lambda_{q+1} x)  \big) \right).
\end{align*}
We now appeal to the commutator estimate of Corollary~\ref{cor:Dt:commutator}: first with  $\lambda = \lambda_{q+1}$ and $s=0$ for $[D_{t,q}, \BB \tilde P_{\approx \lambda_{q+1}}]$, second with $\lambda = \lambda_{q+1}$ and $s=1$ for $[D_{t,q}, \Lambda \PP_{q+1,k}]$, and third with $\lambda = \lambda_{q+1}$ and $s=-1$ for $[D_{t,q}, \BB \tilde P_{\approx \lambda_{q+1}}]$. We obtain that
\begin{align*}
\lambda_{q+1} \norm{D_{t,q} N_2}_{C^0} 
&\lesssim  \sum_{k,j} \norm{D_{t,q} \nabla (\nabla^\perp \cdot v_q)}_{C^0}   \norm{\chi_j  a_{k,j}}_{C^0} +   \norm{\nabla (\nabla^\perp \cdot v_q)}_{C^0}  \left(  \norm{\chi_j' a_{k,j}}_{C^0} + \norm{\nabla u_q}_{C^0} \norm{\chi_j a_{k,j}}_{C^0} \right) \notag\\
&\quad + \norm{\nabla u_q}_{C^0}  \norm{\nabla (\nabla^\perp \cdot v_q)}_{C^0} \sum_{k,j} \norm{\chi_j a_{k,j}}_{C^0}  +  \sum_{k,j} \norm{D_{t,q} (\nabla^\perp \cdot v_q)}_{C^0}   \norm{\chi_j  \nabla (a_{k,j} \psi_{q+1,j,k})}_{C^0} \notag\\
&\quad +  \sum_{k,j} \norm{\nabla^\perp \cdot v_q}_{C^0}  \left(  \norm{\chi_j' \nabla (a_{k,j} \psi_{q+1,j,k})}_{C^0} + \norm{\nabla u_q}_{C^0} \norm{\chi_j \nabla(a_{k,j} \psi_{q+1,j,k})}_{C^0} \right) \notag\\
&\quad + \norm{\nabla u_q}_{C^0} \sum_{k,j} \norm{\nabla^\perp v_q}_{C^0} \norm{\chi_j \nabla(a_{k,j} \psi_{q+1,j,k})}_{C^0} \,.
\end{align*}
Using the previously established bounds and the inductive estimates yields
\begin{align*}
 \norm{D_{t,q} N_2}_{C^0}   
&\lesssim \lambda_{q+1}^{-1} \lambda_q^4 \delta_q    \delta_{q+1}^{1/2} + \lambda_{q+1}^{-1}  \lambda_q^2 \delta_q^{1/2}  \left(  \tau_{q+1}^{-1} \delta_{q+1}^{1/2} +  \lambda_{q}^2 \delta_q^{1/2} \delta_{q+1}^{1/2} \right) 
\notag\\
&\quad +  \lambda_{q+1}^{-1} \lambda_q \delta_q^{1/2}  \left( \tau_{q+1}^{-1} + \lambda_q^{2} \delta_q^{1/2}   \right)  \left( \lambda_q \delta_{q+1}^{1/2} + \delta_{q+1}^{1/2} \lambda_{q+1} \tau_{q+1} \lambda_q^2 \delta_{q}^{1/2} \right) 
\notag \\
&\lesssim  \lambda_q^3 \delta_q  \delta_{q+1}^{1/2} \,.
\end{align*}
Combining the above estimates shows that
\begin{align*}
\norm{D_{t,q} R_n}\lesssim  \lambda_q^3\delta_q^{3/4}\delta_{q+1}^{3/4}+\lambda_q^3 \delta_q  \delta_{q+1}^{1/2}
\lesssim \lambda_0^{-4+\frac{7\beta}{2}}\lambda_{q+1}^2 \delta_{q+1}^{1/2} \lambda_{q+2} \delta_{q+2}\,.
\end{align*}
Therefore \eqref{eq:Dt_Nash} holds so long as $\beta< \frac{8}{7}$.

\subsection{Dissipation error}

\begin{lemma}[Dissipation error]
\label{lem:disip:error}
For any $\eps>0$, if $\lambda_0$ and $q$ are sufficiently large then for $R_D$ as defined in \eqref{eq:R:new:split}, we have that
\begin{align*}
\norm{R_{D}}_{C^0} &\leq \eps \lambda_{q+2} \delta_{q+2} \\
\norm{D_{t,q} R_D}_{C^0} &\leq \eps \lambda_{q+1}^2 \delta_{q+1}^{1/2}  \lambda_{q+2} \delta_{q+2}
\end{align*}
\end{lemma}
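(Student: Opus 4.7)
From the decomposition \eqref{eq:R:new:split}, the dissipation error is implicitly defined by $\div R_D = \Lambda^\gamma w_{q+1}$. Since $w_{q+1}$ has frequency support in the annulus $\{\lambda_{q+1}/2 \leq |\xi|\leq 2\lambda_{q+1}\}$, so does $\Lambda^\gamma w_{q+1}$, and in particular it has zero mean on $\TT^2$. We therefore invert the divergence using $\BB$ together with a fattened frequency cutoff:
\begin{align*}
R_D = \BB \tilde P_{\approx \lambda_{q+1}} \Lambda^\gamma w_{q+1}.
\end{align*}
The plan is to exploit the frequency localization of $w_{q+1}$: the operator $\BB \tilde P_{\approx \lambda_{q+1}}$ is of order $-1$, while $\Lambda^\gamma$ is of order $\gamma$, so combined they act as an operator of order $\gamma-1$ on functions localized at frequency $\lambda_{q+1}$.

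For the amplitude bound, I would apply the Bernstein-type estimate~\eqref{eq:BB:L:infty} with $\lambda = \lambda_{q+1}$ together with \eqref{eq:w:q+1} to obtain
\begin{align*}
\norm{R_D}_{C^0} \lesssim \lambda_{q+1}^{\gamma-1} \norm{w_{q+1}}_{C^0} \lesssim \lambda_{q+1}^{\gamma-1} \delta_{q+1}^{1/2}= \lambda_0 \lambda_{q+1}^{\gamma-1-\beta}.
\end{align*}
Comparing with $\lambda_{q+2}\delta_{q+2} = \lambda_0^{3-2\beta}\lambda_{q+1}^{1-2\beta}$, the desired bound reduces to
\begin{align*}
\lambda_{q+1}^{\gamma+\beta-2} \lesssim \eps\, \lambda_0^{2-2\beta}.
\end{align*}
Since $\gamma + \beta < 2$ by hypothesis, the exponent of $\lambda_{q+1}$ is strictly negative, so taking $q$ sufficiently large (depending on $\eps$ and $\lambda_0$) yields the estimate. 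This is the step where the sharp restriction $\gamma+\beta<2$ enters the argument and, in the scheme, it cannot be improved by adjusting other parameters.

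For the material derivative, I would split
\begin{align*}
D_{t,q} R_D
= \BB \tilde P_{\approx \lambda_{q+1}} \Lambda^\gamma \bigl( D_{t,q} w_{q+1} \bigr)
+ \bigl[D_{t,q}, \BB \tilde P_{\approx \lambda_{q+1}} \Lambda^\gamma \bigr] w_{q+1},
\end{align*}
and further decompose the commutator via Corollary~\ref{cor:Dt:commutator}: the operators $\BB \tilde P_{\approx \lambda_{q+1}}$ and $\Lambda^\gamma$ each yield a commutator with $u_q\cdot\nabla$ that costs one factor of $\norm{\nabla u_q}_{C^0}\lesssim \lambda_q^2 \delta_q^{1/2}$ and saves one frequency factor relative to the respective orders of the operators. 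Combining Lemma~\ref{lem:w:q+1:bounds} (specifically \eqref{eq:Dt:w:q+1}) with the Bernstein-type bound, I expect
\begin{align*}
\norm{D_{t,q} R_D}_{C^0} \lesssim \lambda_{q+1}^{\gamma-1} \norm{D_{t,q} w_{q+1}}_{C^0} + \lambda_{q+1}^{\gamma-1} \norm{\nabla u_q}_{C^0} \norm{w_{q+1}}_{C^0}
\lesssim \lambda_{q+1}^{\gamma-1} \tau_{q+1}^{-1} \delta_{q+1}^{1/2} ,
\end{align*}
where in the last step I used $\norm{\nabla u_q}_{C^0}\lesssim \lambda_q^2\delta_q^{1/2}\lesssim \tau_{q+1}^{-1}$. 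Using $\tau_{q+1}^{-1} = \lambda_q\lambda_{q+1}\delta_q^{1/4}\delta_{q+1}^{1/4}$ and comparing with $\lambda_{q+1}^2\delta_{q+1}^{1/2}\lambda_{q+2}\delta_{q+2}$, the same gain $\lambda_{q+1}^{\gamma+\beta-2}$ appears (up to fixed powers of $\lambda_0$), so the estimate again follows by choosing $q$ large enough.

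The main obstacle is the one already flagged in the problem: unlike the transport, Nash and oscillation errors whose smallness is driven by the frequency gap in the iteration, the dissipation error has no such built-in gain and its decay is controlled solely by the scaling exponent $\gamma+\beta-2<0$. Thus the argument is essentially a careful bookkeeping exercise, but it forces the hypothesis $\gamma<2-\beta$ and is the step that requires large $q$ (as opposed to merely large $\lambda_0$) to absorb the fixed constants. No new idea beyond Lemma~\ref{lem:w:q+1:bounds}, Corollary~\ref{cor:Dt:commutator}, and Bernstein-type inequalities for $\BB$ should be needed.
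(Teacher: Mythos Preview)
Your proposal is correct and follows essentially the same approach as the paper: define $R_D = \BB \Lambda^\gamma \tilde P_{\approx \lambda_{q+1}} w_{q+1}$, use Bernstein-type bounds to get $\norm{R_D}_{C^0}\lesssim \lambda_{q+1}^{\gamma-1}\delta_{q+1}^{1/2}$, and for the material derivative apply Corollary~\ref{cor:Dt:commutator} together with \eqref{eq:Dt:w:q+1}. One small simplification: the paper treats $\BB \Lambda^\gamma \tilde P_{\approx \lambda_{q+1}}$ as a single Fourier multiplier of order $s=\gamma-1$ in Corollary~\ref{cor:Dt:commutator}, so there is no need to split the commutator into separate pieces for $\BB$ and $\Lambda^\gamma$.
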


\subsubsection{Amplitude of the dissipation error}
By definition,
\begin{align}
R_D = \BB \Lambda^\gamma w_{q+1} = \BB \Lambda^\gamma \tilde P_{\approx \lambda_{q+1}} w_{q+1}.
\label{eq:RD:def}
\end{align}
Therefore, it follows from Bernstein's inequality for Fourier multipliers~\cite{Le2002} that 
\begin{align*}
\norm{R_D}_{C^0} 
&\lesssim \lambda_{q+1}^{\gamma-1} \norm{w_{q+1}}_{C^0}  \notag\\
&\lesssim \lambda_{q+1}^{\gamma-1} \delta_{q+1}^{1/2} \notag\\
&\leq \lambda_{0}^{2\beta -2} \lambda_{q+1}^{\gamma + \beta -2 } \lambda_{q+2} \delta_{q+2}.
\end{align*}
Thus we obtain the stated estimate so long as $\gamma < 2-\beta$, $\beta<1$ and $\lambda_0$ is sufficiently large.

\subsubsection{Material derivative of the dissipation error}
The estimate on the material derivative of the dissipation error follows directly from \eqref{eq:RD:def}, the previously established bound \eqref{eq:Dt:w:q+1} for the material derivative of the perturbation, and the commutator estimate of Corollary \eqref{cor:Dt:commutator} in which we set $\lambda = \lambda_{q+1}$, and $s=  \gamma - 1$, which is the order of the Fourier multiplier operator $\BB \Lambda^\gamma \tilde P_{\approx \lambda_{q+1}}$. We obtain that
\begin{align*}
\norm{D_{t,q} R_{D}}_{C^0} 
&\leq \norm{(\BB \Lambda^\gamma \tilde P_{\approx \lambda_{q+1}}) D_{t,q} w_{q+1}}_{C^0} + \norm{ \left[ D_{t,q}, \BB \Lambda^\gamma \tilde P_{\approx \lambda_{q+1}}\right]w_{q+1}}_{C^0}\notag\\
&\lesssim  \lambda_{q+1}^{\gamma-1} \norm{D_{t,q} w_{q+1}}_{C^0} +   \lambda_{q+1}^{\gamma-1} \norm{\nabla u_q}_{C^0} \norm{w_{q+1}}_{C^0} \notag\\
&\lesssim \lambda_{q+1}^{\gamma-1} (\tau_{q+1}^{-1} + \lambda_q^2 \delta_q^{1/2} ) \delta_{q+1}^{1/2} \notag\\
&\lesssim \lambda_q \lambda_{q+1}^\gamma \delta_{q}^{1/4} \delta_{q+1}^{3/4} \notag\\
&= \lambda_0^{\gamma + 7 \beta/2 - 4} \lambda_q^{\gamma+\beta-2}\lambda_{q+1}^{2} \delta_{q+1}^{1/2} \lambda_{q+2} \delta_{q+2}.
\end{align*}
Thus again we obtain the stated estimate if $\gamma < 2-\beta$ and $q$ is sufficiently large.

\subsection{Oscillation error}
\label{sec:osc}
\begin{lemma}[Oscillation error]
\label{lem:osc:error}
For any $\eps>0$, if $\lambda_0$ and $q$ are sufficiently large, then for $R_O$ as defined in \eqref{eq:R:new:split}, we have that
\begin{align*}
\norm{R_{O}}_{C^0} &\leq \eps \lambda_{q+2} \delta_{q+2} \,, \\
\norm{D_{t,q} R_O}_{C^0} &\leq \eps \lambda_{q+1}^2 \delta_{q+1}^{1/2}  \lambda_{q+2} \delta_{q+2} \,.
\end{align*}
\end{lemma}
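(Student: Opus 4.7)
The strategy is to decompose the nonlinearity
\[
\Lambda w_{q+1}\cdot\nabla w_{q+1} - (\nabla w_{q+1})^T\cdot\Lambda w_{q+1}
\]
into a divergence of a symmetric matrix plus a gradient (absorbable into $\tilde p_{q+1}$), isolate the low-frequency matrix contribution, and arrange via the Geometric Lemma~\ref{lem:split} for it to cancel $\mathring R_q$. Starting from the identity $u\cdot\nabla v - (\nabla v)^T\cdot u = u^\perp(\nabla^\perp\cdot v)$ applied with $u = \Lambda w_{q+1}$ and $v = w_{q+1}$, the plan is to invoke the bilinear pseudo-product tool from Section~\ref{sec:results} to rewrite the nonlinearity in matrix-divergence form. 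The key structural observation is that on the frequency support of each piece $\PP_{q+1,k}\tilde w_{q+1,j,k}$, the operator $\Lambda$ acts approximately as multiplication by $\lambda_{q+1}$, since $\Lambda b_k = b_k$; the leading structure is therefore $\lambda_{q+1}\div(w_{q+1}\otimes w_{q+1} - \frac{1}{2}|w_{q+1}|^2\Id) + \nabla p_{\mathrm{err}}$, mirroring the Euler convex integration scheme up to lower-order commutators.

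Next, expand $w_{q+1}$ via \eqref{eq:w:q+1:def} and decompose $w_{q+1}\otimes w_{q+1}$ into a double sum over $(j,k)$ and $(j',k')$. The contributions split into low-frequency pairs with $k+k'=0$, which by Lemma~\ref{lem:split}(d) forces $\Omega_j = \Omega_{j'}$, and high-frequency pairs with $|k+k'|\geq 1/2$, oscillating at frequency $\geq \lambda_{q+1}/2$. For the low-frequency pairs, the identity $b_k\otimes b_{-k} = k^\perp\otimes k^\perp$, the definition \eqref{eq:ak:def} of $a_{k,j}$, and Lemma~\ref{lem:split}(c) applied to $R = R_{q,j}/(\lambda_{q+1}\rho_j)$ give $\sum_{k\in\Omega_j}a_{k,j}^2 k^\perp\otimes k^\perp = 2R_{q,j}/\lambda_{q+1}$, so that after subtracting the trace contribution into the pressure and using $R_{q,j} = \lambda_{q+1}\rho_j\Id - \mathring R_{q,j}$, the low-frequency matrix is designed to cancel $\sum_j\chi_j^2 \mathring R_{q,j}$ against $\mathring R_q$ in $\div R_O$. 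Since $\sum_j\chi_j^2\equiv 1$, only the residual $\sum_j\chi_j^2(\mathring R_q - \mathring R_{q,j})$ survives, and it is already in matrix form (no $\BB$ required). Using that $\mathring R_{q,j}$ solves the transport equation \eqref{eq:R:q:transported} with initial data $\mathring R_q(\cdot,j\tau_{q+1})$, together with \eqref{eq:ind:q:4} and \eqref{eq:tau:q}, gives $\|\mathring R_q - \mathring R_{q,j}\|_{C^0(\supp\chi_j)} \lesssim \tau_{q+1}\|D_{t,q}\mathring R_q\|_{C^0} \lesssim \lambda_q\delta_q^{1/4}\delta_{q+1}^{3/4} = \lambda_0^{-2+5\beta/2}\lambda_{q+2}\delta_{q+2}$; this is the sharp estimate, and it yields the restriction $\beta < 4/5$.

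The high-frequency pairs are handled by applying $\BB$, which gains a factor $\lambda_{q+1}^{-1}$. Crucially, the principal plane-wave interaction with constant amplitudes is a pure gradient (as seen from the second line of \eqref{eq:Beltrami_identity}, $\frac{1}{2}\nabla|W|^2 - V\nabla V$), so after absorbing this into the pressure the non-gradient high-frequency residual carries an amplitude derivative of order $\lambda_q$ rather than $\lambda_{q+1}$, giving a bound $\lesssim \lambda_q\delta_{q+1} = \lambda_0^{2\beta-2}\lambda_{q+2}\delta_{q+2}$, acceptable for $\beta<1$. The material derivative estimates follow by differentiating each piece, using $D_{t,q}a_{k,j}=0$, $D_{t,q}\Phi_j=0$, and Corollary~\ref{cor:Dt:commutator} to commute $D_{t,q}$ past $\BB$, $\PP_{q+1,k}$, and $\tilde P_{\approx\lambda_{q+1}}$; each piece picks up an extra factor of at most $\tau_{q+1}^{-1}$ or $\|\nabla u_q\|_{C^0}\lesssim \lambda_q^2\delta_q^{1/2}$, comfortably below the target $\lambda_{q+1}^2\delta_{q+1}^{1/2}$. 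The main technical obstacle is the systematic bookkeeping of the bilinear pseudo-product decomposition and its commutators with $\Lambda$, $\PP_{q+1,k}$, and the phases $\psi_{q+1,j,k}$ --- this is precisely where the odd-multiplier obstruction noted in~\cite{IsVi2015} for the $\theta$-formulation is bypassed by working with the potential velocity $v$.
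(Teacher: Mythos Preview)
Your strategy is essentially the paper's: the three-fold split into $R_{O,\mathrm{approx}}$, $R_{O,\mathrm{low}}$, and $R_{O,\mathrm{high}}$, the bilinear pseudo-product to put the low-frequency interaction in exact divergence form, extraction of the principal symbol to recover $-\lambda_{q+1}k\otimes k$ and cancel $\mathring R_{q,j}$ via Lemma~\ref{lem:split}, and the Beltrami identity for the high-frequency residual. The $R_{O,\mathrm{approx}}$ estimate and its role in forcing $\beta<4/5$ are exactly right.

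One quantitative correction: your high-frequency bound $\lambda_q\delta_{q+1}$ undercounts. The amplitude whose derivative survives is not $a_{k,j}$ alone but $a_{k,j}\psi_{q+1,j,k}$, and by Lemma~\ref{lem:D:N:a:psi} the phase $\psi_{q+1,j,k}$ contributes a derivative of size $\lambda_q\delta_q^{1/4}\delta_{q+1}^{-1/4}$, which dominates $\|\nabla a_{k,j}\|_{C^0}\lesssim\lambda_q$. The correct high-frequency bound is therefore $\lambda_q\delta_q^{1/4}\delta_{q+1}^{3/4}$ (see \eqref{eq:sharp:2}), the same as $R_{O,\mathrm{approx}}$, and it too forces $\beta<4/5$ rather than $\beta<1$. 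The same undercount would appear in your $R_{O,\mathrm{low}}$ commutator estimate after subtracting the principal symbol. This does not break the argument, but it means all three pieces of the oscillation error are equally sharp, and the $4/5$ threshold is not attributable to the transport approximation alone.
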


\subsubsection{Decomposition of the oscillation error}
Recall from \eqref{eq:R:new:split} that the oscillation error $R_O$ is defined so that the following equality is satisfied:
\begin{align}
\div R_O 
&=  \div \mathring R_q + \Lambda w_{q+1} \cdot \nabla w_{q+1} - (\nabla w_{q+1})^T \cdot \Lambda w_{q+1}  \notag\\
&=  \div \left( \sum_j \chi_j^2 ( \mathring R_{q} - \mathring R_{q,j}) \right) + \div \left( \sum_j \chi_j^2( \mathring R_{q,j} + \rho_j \lambda_{q+1} \Id) \right) \notag\\  
&\qquad + \Big( \Lambda w_{q+1} \cdot \nabla w_{q+1} - (\nabla w_{q+1})^T \cdot \Lambda w_{q+1}\Big)  . \label{RO}
\end{align}

\begin{remark}
Note that in the above formula, as well as throughout this paper,  we  somewhat abuse notation and write $\sum_{j}$ to mean
the summation $\sum_{\{j:\rho_j\neq 0\}} $.\footnote{We also denote by $\sum_{ j,k} $ the double sum $\sum_{\{j:\rho_j\neq 0\}} \sum_{k \in \Omega_j}$, and similarly $\sum_{j,j',k,k'}$ denotes a quadruple sum.}
It this then important to note that in view of \eqref{eq:zero_reynolds}, the decomposition \eqref{RO} is valid so long as
\begin{align*}
\ee(t) - \int_{\TT^2}\abs{\Lambda^{\sfrac12} v_q}^2~dx\leq  \frac{\lambda_{q+1}\delta_{q+1}}{8}
\end{align*}
on the support of $\chi_j$ for $\rho_j=0$. The proof of this fact will be delayed to Lemma \ref{lem:rho_diff}, equation \eqref{eq:rho_vanish}.
\end{remark}

Recalling that 
\begin{align*}
w_{q+1}(x,t) = \sum\limits_{\substack{  \{j:\rho_j\neq 0\} \\ k \in \Omega_{j}}}
 \PP_{q+1,k} \tilde w_{q+1,j,k} \qquad \mbox{with} \qquad \tilde w_{q+1,j,k} = \chi_{j}(t)  a_{k,j}(x,t) b_k( \lambda_{q+1} \Phi_j(x,t) )  \,,
\end{align*}
 the term $ \Lambda w_{q+1} \cdot \nabla w_{q+1} - (\nabla w_{q+1})^T \cdot \Lambda w_{q+1}$ in \eqref{RO} has  both high and low
 frequency components, depending  whether  $k \neq - k'$ or $k=-k'$. 
We notice that due to the frequency localization induced by $\PP_{q+1,k}$, for $k,k' \in \Omega_j$ with $k+k' \neq 0$,  we have that $1/2 \leq |k+k'| \leq 2$, and due to the localization in the angular frequency variable, we obtain that  
\begin{align}
\Lambda \PP_{q+1,k} \tilde w_{q+1,j,k} \cdot \nabla \PP_{q+1,k'} \tilde w_{q+1,j',k'} = \tilde P_{\approx \lambda_{q+1}} \left( \Lambda \PP_{q+1,k} \tilde w_{q+1,j,k} \cdot \nabla \PP_{q+1,k'} \tilde w_{q+1,j',k'} \right)
\label{eq:Lambda:angle:local}
\end{align}
and
\begin{align*}
(\nabla \PP_{q+1,k} \tilde w_{q+1,j,k})^T \otimes \Lambda \PP_{q+1,k'} \tilde w_{q+1,j',k'} = \tilde P_{\approx \lambda_{q+1}} \left( (\nabla \PP_{q+1,k} \tilde w_{q+1,j,k})^T \otimes \Lambda \PP_{q+1,k'} \tilde w_{q+1,j',k'} \right).
\end{align*}
We shall thus isolate the 
 high-frequency part of $R_O$ due to the nonlinear interactions in $\Lambda w_{q+1} \cdot \nabla w_{q+1} - (\nabla w_{q+1})^T \cdot \Lambda w_{q+1}$, as
\begin{align}
 R_{O,{\rm high}} &= \BB \tilde P_{\approx \lambda_{q+1}}  \Bigg(\sum_{\underset{k+k'\neq 0 } {j,j',k,k'}} \big(\Lambda \PP_{q+1,k} \tilde w_{q+1,j,k}\big) \cdot \nabla \big(\PP_{q+1,k'} \tilde w_{q+1,j',k'}\big)  \Bigg)  
\notag\\
&\quad -  \BB  \tilde P_{\approx \lambda_{q+1}} \Bigg(\sum_{\underset{k+k'\neq 0 } {j,j',k,k'}} \big(\nabla \PP_{q+1,k} \tilde w_{q+1,j,k}\big)^T \cdot \big( \Lambda  \PP_{q+1,k'} \tilde w_{q+1,j',k'} \big) \Bigg).
\label{eq:R:O:high}
\end{align}
Similarly, we need to isolate the low-frequency part of $\Lambda w_{q+1} \cdot \nabla w_{q+1} - (\nabla w_{q+1})^T \cdot \Lambda w_{q+1}$ 
which occurs when  $k+k'=0$. 
Since $\Omega_1 \cap \Omega_2 = \emptyset$, $k + k' = 0$ implies that $j = j'$, so that upon symmetrizing, we may define the low frequency part of the nonlinear term as
\begin{align}
\TTT_{j,k} &= \frac{1}{2}\Bigg(  
\big(\Lambda \PP_{q+1,k} \tilde w_{q+1,j,k}\big) \cdot \nabla \big(\PP_{q+1,-k} \tilde w_{q+1,j,-k}\big)  
+\big(\Lambda \PP_{q+1,k} \tilde w_{q+1,j,k}\big) \cdot \nabla \big(\PP_{q+1,-k} \tilde w_{q+1,j,-k}\big)  \notag \\
&\quad -  \big(\nabla \PP_{q+1,k} \tilde w_{q+1,j,k}\big)^T \cdot \big( \Lambda  \PP_{q+1,-k} \tilde w_{q+1,j,-k} \big)  
- \big(\nabla \PP_{q+1,-k} \tilde w_{q+1,j,-k}\big)^T \cdot \big( \Lambda  \PP_{q+1,k} \tilde w_{q+1,j,k} \big) \Bigg).
\label{eq:TTT:j:k:def}
\end{align}
The challenge now is to obtain the decomposition
\begin{align}
{\TTT}_{j,k} = \div ({\mathcal Q}_{j,k}) + \nabla {\mathcal P}_{j,k}
\label{eq:R:O:low}
\end{align}
for a suitably defined $2$-tensor ${\mathcal Q}_{j,k}$ which gains one derivative over ${\TTT}_{j,k}$ and obeys good transport estimates, 
and a scalar function ${\mathcal P}_{j,k}$. 
This is achieved in Section~\ref{sec:T:j:k:div}, equation \eqref{eq:Q:j:k:def} below.
 In turn, the decomposition \eqref{eq:R:O:low}  allows us to write the oscillation stress as
\begin{align}
R_O 
&=\sum_j \chi_j^2 (\mathring R_q - \mathring R_{q,j}) +\left(\sum_j \chi_j^2 \mathring R_{q,j} + \sum_{j,k}\mathring{ \mathcal Q}_{j,k} \right) + R_{O,{\rm high}}\notag\\
&=: R_{O,{\rm approx}} + R_{O,{\rm low}} + R_{O,{\rm high}} \,,
\label{eq:RO:decompose}
\end{align}
where we have used the notation $\mathring R_{q,j}$ and $\mathring{ \mathcal Q}_{j,k}$ to denote the traceless parts of 
$R_{q,j}$ and ${ \mathcal Q}_{j,k}$,  respectively. We have also used the fact that $\BB$  already contains the Leray projector, so that 
it annihilates gradients.

\subsubsection{The definition of ${\mathcal Q}_{j,k}$}
\label{sec:T:j:k:div}

Before explaining how we obtain the $2$-tensor ${\mathcal Q}_{j,k}$ and  prior to estimating the three terms in \eqref{eq:RO:decompose}, two technical remarks are in order. First, 
since $\PP_{q+1,k} b_k(\lambda_{q+1} x) = b_k(\lambda_{q+1} x)$, we may write
\begin{align}
\PP_{q+1,k} \tilde w_{q+1,j,k} 
&= \tilde w_{q+1,j,k}  + \chi_{j}  \big[\PP_{q+1,k}, a_{k,j}  \psi_{q+1,j,k} \big] b_k( \lambda_{q+1} x )
\label{eq:local:w:q+1}
\end{align}
and thus
\begin{align}
w_{q+1} 
&=  \sum_{j,k} \tilde w_{q+1,j,k}  + \sum_{j,k} \chi_{j}  \big[\PP_{q+1,k}, a_{k,j}  \psi_{q+1,j,k} \big] b_k( \lambda_{q+1} x ).
\label{eq:local:w:q+1:*}
\end{align}
And second,  
since $\Lambda$ and $\PP_{q+1,k}$ commute, using \eqref{eq:bk-eigenvalue}  we may write
\begin{align}
\Lambda \PP_{q+1,k} \tilde w_{q+1,j,k} 
&= \chi_{j}     a_{k,j} \psi_{q+1,j,k}   \PP_{q+1,k} \Lambda b_k( \lambda_{q+1} x )  
+  \chi_{j}   [\PP_{q+1,k} \Lambda, a_{k,j} \psi_{q+1,j,k} ] b_k( \lambda_{q+1} x ) 
\notag \\
&= \lambda_{q+1}   \tilde w_{q+1,j,k}  +  \chi_{j}   [\PP_{q+1,k} \Lambda, a_{k,j}  \psi_{q+1,j,k} ] b_k( \lambda_{q+1} x ).
\label{eq:Lambda:w:q+1}
\end{align}
and thus
\begin{align}
\Lambda w_{q+1} = \lambda_{q+1} \sum_{j,k} \tilde w_{q+1} + \sum_{j,k}  \chi_{j}    [\PP_{q+1,k} \Lambda, a_{k,j}  \psi_{q+1,j,k} ] b_k( \lambda_{q+1} x ).
\label{eq:Lambda:w:q+1:*}
\end{align}

Let us define the potential vorticity associated to the perturbation $\PP_{q+1,k} \tilde w_{q+1,j,k}$ as
\begin{align}
\vartheta_{j,k} =   \nabla^{\perp} \cdot \PP_{q+1,k} \tilde w_{q+1,j,k}.
\label{eq:vartheta:def}
\end{align}
Using the identity
\begin{align}
\Lambda f \cdot \nabla g - \left(\nabla g\right)^T \Lambda f = \Lambda f^\perp (\nabla^\perp \cdot g) =  \left( \RSZ \big( \nabla^{\perp}\cdot f\big) \right) (\nabla^{\perp}\cdot g)
\label{eq:2D:MAGIC}
\end{align}
which holds for any vector fields $f,g \colon \TT^2 \to \CC^2$  with $\nabla \cdot f = 0$, 
we may write $\TTT_{j,k}$, as defined in \eqref{eq:TTT:j:k:def}, in the convenient form
\begin{align}
\TTT_{j,k}=\frac 12\left((\RSZ \vartheta_{j,k})\vartheta_{j,-k}+\vartheta_{j,k}(\RSZ\vartheta_{j,-k})\right) =: \TTT(\vartheta_{j,k},\vartheta_{j,-k})
\label{eq:TTT:useful}
\end{align}
where as usual $\RSZ = (\RSZ_1, \RSZ_2)$ is the Riesz-transform vector. Our goal next is to rewrite the operator $\TTT$ defined by \eqref{eq:TTT:useful} as a sum of a pressure gradient and a divergence of a $2$-tensor.

By an abuse of notation concerning Fourier transforms and Fourier series, we can rewrite $\TTT$ as a bilinear Fourier operator whose $\ell^{\rm th}$ component is given by 
\begin{align*}
2 \left(\TTT^\ell (f,g)\right)^\wedge (\xi) =& \int_{\RR^2} (\RSZ^\ell f)^\wedge(\xi-\eta)\hat g(\eta)~d\eta+
\int_{\RR^2} (\RSZ^\ell g)^\wedge(\eta)\hat f(\xi-\eta)~d\eta \\
=&\int_{\RR^2} \frac{i(\xi-\eta)^\ell}{\abs{\xi-\eta}} \hat f(\xi-\eta) \hat g (\eta)~d\eta+
\int_{\RR^2} \frac{i\eta^\ell}{\abs{\eta}}\hat g(\eta) \hat f(\xi-\eta)~d\eta \\
=&\int_{\RR^2} \left(\frac{i(\xi-\eta)^\ell}{\abs{\xi-\eta}} + \frac{i\eta^\ell}{\abs{\eta}}\right) \hat f(\xi-\eta) \hat g(\eta)~d\eta~.
\end{align*}
Rearranging the symbol,  we have that
\begin{align*}
\frac{i(\xi-\eta)^\ell}{\abs{\xi-\eta}} + \frac{i\eta^\ell}{\abs{\eta}}
=&\frac{i\left((\xi-\eta)^\ell \abs{\eta} + \eta^\ell \abs{\xi-\eta} \right)}{\abs{\xi-\eta} \abs{\eta}} \notag \\
=&\frac{i\xi^\ell \abs{\eta}}{\abs{\xi-\eta} \abs{\eta}} + \frac{i \eta^\ell \left(\abs{\xi-\eta}-\abs{\eta}\right) }{\abs{\xi-\eta} \abs{\eta}} \notag \\
=&\frac{i\xi^\ell}{\abs{\xi-\eta}} + \frac{i\eta^\ell}{\abs{\xi-\eta} \abs{\eta}} \int_0^1\left(\frac{d}{dr} \abs{\eta-r\xi}\right)~dr
\notag \\
=&\frac{i\xi^\ell}{\abs{\xi-\eta}} - \frac{i\eta^\ell}{\abs{\xi-\eta} \abs{\eta}} \xi^m \int_0^1 \frac{\left(\eta-r\xi\right)^m} {\abs{\eta-r\xi}}~dr \notag \\
=&\left(i\xi^\ell\right) \frac{1}{\abs{\xi-\eta} } + \left(i\xi^m\right)  \frac{i\eta^\ell}{\abs{\eta}} \frac{1}{\abs{\xi-\eta} } s^m(\xi-\eta,\eta) 
\end{align*}
where we  define the symbol $s \colon \RR^2 \times \RR^2 \to \CC$ by
\begin{align}
s^{m}(\zeta,\eta) = \int_0^1 \frac{i\left((1-r)\eta-r\zeta\right)^m} {\abs{(1-r)\eta-r\zeta}}~dr.
\label{eq:sm:symbol:def}
\end{align}
An important property of the symbol $s^m$ (which will  later be essential to our proof)  is that 
\begin{align}
s^m(-\eta,\eta) = \frac{i \eta^m}{|\eta|}
\label{eq:sm:property}
\end{align}
which is the symbol of the Riesz transform $\RSZ^m$.
As a result of the above computations,  may write
\begin{equation}\label{eq:T_symb}
\begin{split}
 \left( \TTT^\ell(f,g)\right)^\wedge (\xi)  =&  \frac{i\xi^\ell}{2} \int_{\RR^2} \frac{\hat f(\xi-\eta)}{\abs{\xi-\eta}}  \hat{g}(\eta)~d\eta + \frac{i\xi^m}{2} \int_{\RR^2} s^m(\xi-\eta,\eta) \frac{\hat f(\xi-\eta)}{\abs{\xi-\eta}} \frac{i\eta^l}{\abs{\eta}} \hat g(\eta)~d\eta.
\end{split}
\end{equation}
Upon defining the bilinear pseudo-product operator $\SSS^m$ in Fourier space as
\begin{equation}
\left(\SSS^m (f,g)\right)^\wedge(\xi):= \int_{\RR^2} s^m(\xi-\eta, \eta) \hat f(\xi-\eta)\hat g(\eta) ~d\eta~,
\label{eq:SSS:def}
\end{equation}
we then obtain from \eqref{eq:T_symb} the following formula for $\mathcal T$:
\begin{align}
\TTT^\ell(f,g)  = \frac 12 \partial_\ell (\Lambda^{-1}f g)+ \frac 12 \partial_m (\SSS^m(\Lambda^{-1}f,\RSZ^\ell g))~.
\label{eq:T:grad:div}
\end{align}
The
representation \eqref{eq:SSS:def} of the bilinear operator $\SSS^m$ is not very convenient to  estimate;  instead, we compute
 the inverse Fourier transform with respect to $\xi$ and rewrite $\SSS^m$ as
\begin{align}
\SSS^m(f,g)(x) :=  \frac{1}{(2\pi)^2}\intint_{\RR^2\times \RR^2} s^m(\zeta,\eta)  \hat{f}(\zeta) \hat{g}(\eta) e^{i x \cdot (\zeta + \eta)} d\zeta \, d\eta\, ,
\label{eq:SSS:def:real}
\end{align}
and upon further computing the inverse Fourier transform with respect to $(\eta,\zeta) \in \RR^4$, we rewrite $\SSS^m$ as follows:
\begin{align}
\SSS^m(f,g)(x) = \intint_{\RR^2\times \RR^2} K_{s^m}(x-y,x-z) f(y) g(z) dy dz \,,
\label{eq:SSS:def:conv}
\end{align}
where $K_{s^m}$ is the inverse Fourier transform in $\RR^4$ of $s^m$. Thus, another way to view $\SSS^m$ is as a bilinear convolution operator. We refer to~\cite{CoMe1978,GrTo2002,MuSh2013} and Appendix~\ref{app:pseudo:product}  for further properties of 
pseudo-product operators of  the type \eqref{eq:sm:symbol:def}, and for the equivalence of the definitions \eqref{eq:SSS:def:real}--\eqref{eq:SSS:def:conv}.

In view of \eqref{eq:TTT:useful} and \eqref{eq:T:grad:div}, we have now defined the tensor ${\mathcal Q}_{j,k}$ and the scalar ${\mathcal P}_{j,k}$ in \eqref{eq:R:O:low}, namely
\begin{align*}
\TTT_{j,k} = \frac 12 \nabla \left(\Lambda^{-1} \vartheta_{j,k} \, \vartheta_{j,-k} \right) + \frac 12 \div \left( \SSS\Big( \Lambda^{-1} \vartheta_{j,k}, \RSZ \vartheta_{j,-k} \Big) \right),
\end{align*}
so that 
\begin{align}
\left({\mathcal Q}_{j,k}\right)^{m \ell} 
&= \frac 12 \SSS^m\left( \Lambda^{-1} \vartheta_{j,k}, \RSZ^\ell \vartheta_{j,-k} \right) \,,
\label{eq:Q:j:k:def} \\
{\mathcal P}_{j,k} 
&= \Lambda^{-1} \vartheta_{j,k} \vartheta_{j,-k} \,, \notag
\end{align}
with the bilinear pseudo-product operator $\SSS^m$ being defined by \eqref{eq:SSS:def:real}.

\subsubsection{Canceling the principal part of the $R_{O,{\rm low}}$ stress}
Before estimating $R_{O,{\rm low}}$, we need to extract the leading order term in the matrices ${\mathcal Q}_{j,k}$ defined by \eqref{eq:Q:j:k:def}. For this purpose recall cf.~\eqref{eq:PP:q+1:k:def} and \eqref{eq:vartheta:def} that 
\begin{align*}
\vartheta_{j,k} 
&= \nabla^\perp \cdot \left( P_{\approx k\lambda_{q+1} } \tilde w_{q+1,j,k} + (\RSZ \otimes \RSZ) P_{\approx k \lambda_{q+1} } \tilde w_{q+1,j,k} \right) \notag\\
&= P_{\approx k\lambda_{q+1} }  \left(\nabla^\perp \cdot  \tilde w_{q+1,j,k} \right).
\end{align*}
Here $\hat K_{\approx k \lambda_{q+1}}(\xi) = \hat K_{\approx 1}(\xi/\lambda_{q+1} - k)$ is the Fourier symbol of $P_{\approx k \lambda_{q+1}}$. 
Using the precise definition of $\tilde w_{q+1,j,k}$ in \eqref{eq:tilde:w:q+1:def}, the definition of $b_k$ and $c_k$ in \eqref{eq:bk:ck:def}, and the notation \eqref{eq:psi:q+1:j:k:def}, we obtain
\begin{align}
\Lambda^{-1} \vartheta_{j,k} 
&= \Lambda^{-1} P_{\approx k \lambda_{q+1}} \left(\nabla^\perp \cdot  \tilde w_{q+1,j,k} \right) =  P_{\approx k \lambda_{q+1}} \RSZ^\perp \cdot \tilde w_{q+1,j,k} \notag\\
&=  \chi_j   (i k^\perp) \cdot \RSZ^\perp P_{\approx k \lambda_{q+1}} \Big( a_{k,j}  \psi_{q+1,j,k} c_k(\lambda_{q+1}x ) \Big) 
\label{eq:Lambda:-1:vartheta}
\end{align}
and 
\begin{align}
\RSZ^\ell \vartheta_{j,-k} 
&=  \RSZ^\ell P_{\approx - k \lambda_{q+1}} \left(\nabla^\perp \cdot  \tilde w_{q+1,j,-k} \right) \notag\\
&= - \chi_j (i k^\perp) \cdot \nabla^\perp  \RSZ^\ell P_{\approx - k \lambda_{q+1}} \Big( a_{k,j}  \psi_{q+1,j,-k} c_{-k}(\lambda_{q+1}x ) \Big) .
\label{eq:Riesz:vartheta}
\end{align}
We note that since multiplication by $c_k(\lambda_{q+1} x)$ results in a shift by $\lambda_{q+1} k$ in frequency, the Fourier analogues of \eqref{eq:Lambda:-1:vartheta} and \eqref{eq:Riesz:vartheta} are
\begin{align}
\left(\Lambda^{-1} \vartheta_{j,k} \right)^\wedge(\xi)
&=  - \chi_j(t)     \frac{k \cdot \xi}{|\xi|} \hat K_{\approx 1}\left(\frac{\xi}{\lambda_{q+1}} - k\right) \left( a_{k,j}  \psi_{q+1,j,k}\right)^\wedge (\xi - k\lambda_{q+1})
\label{eq:Lambda:-1:vartheta:F}
\end{align}
and 
\begin{align}
\left(\RSZ^\ell \vartheta_{j,-k} \right)^\wedge(\xi)
&=  \chi_j(t)  i \xi^\ell  \frac{( k \cdot \xi)}{|\xi|} K_{\approx 1}\left(\frac{\xi}{\lambda_{q+1}} + k\right)\left(a_{k,j}  \psi_{q+1,j,-k}\right)^\wedge(\xi+ k \lambda_{q+1}).
\label{eq:Riesz:vartheta:F}
\end{align}
Inserting  \eqref{eq:Lambda:-1:vartheta:F}--\eqref{eq:Riesz:vartheta:F} in  formula \eqref{eq:Q:j:k:def}, and recalling \eqref{eq:SSS:def}--\eqref{eq:SSS:def:real}, we obtain that 
\begin{align}
{\mathcal Q}_{j,k}^{m\ell}(x)
&=    \frac{\chi_j^2(t)}{2 (2 \pi)^2} \intint_{\RR^2\times\RR^2} (- i s^m(\zeta,\eta)) \frac{k \cdot \zeta}{|\zeta|} \hat K_{\approx 1}\left(\frac{\zeta - k \lambda_{q+1}}{\lambda_{q+1}} \right) \left( a_{k,j}  \psi_{q+1,j,k}\right)^\wedge (\zeta - k\lambda_{q+1})  \notag\\
&\qquad \qquad \quad \times \eta^\ell  \frac{  k \cdot \eta}{|\eta|} \hat K_{\approx 1}\left(\frac{\eta + k\lambda_{q+1}}{\lambda_{q+1}} \right)\left(a_{k,j}  \psi_{q+1,j,-k}\right)^\wedge(\eta+ k \lambda_{q+1}) e^{i x \cdot(\zeta + \eta)} d\zeta d\eta \notag\\
&=    \frac{\chi_j^2(t)}{2 (2 \pi)^2} \intint_{\RR^2\times\RR^2} (- i s^m(\zeta + k \lambda_{q+1},\eta - k \lambda_{q+1})) \frac{k \cdot (\zeta + k \lambda_{q+1})}{|\zeta + k \lambda_{q+1}|} \hat K_{\approx 1}\left(\frac{\zeta}{\lambda_{q+1}} \right) \left( a_{k,j}  \psi_{q+1,j,k}\right)^\wedge (\zeta)  \notag\\
&\qquad \qquad \quad \times (\eta^\ell - k^\ell \lambda_{q+1})  \frac{  k \cdot (\eta - k \lambda_{q+1}) }{|\eta - k \lambda_{q+1}|} \hat K_{\approx 1}\left(\frac{\eta}{\lambda_{q+1}} \right)\left(a_{k,j}  \psi_{q+1,j,-k}\right)^\wedge(\eta) e^{i x \cdot(\zeta + \eta)} d\zeta d\eta \notag\\
&= \frac{\chi_j^2}{2}  \frac{1}{(2\pi)^2} \int\int_{\RR^2\times \RR^2} M_k^{m\ell}(\zeta,\eta) \left( a_{k,j}  \psi_{q+1,j,k}\right)^\wedge (\zeta) 
\left(a_{k,j}  \psi_{q+1,j,-k}\right)^\wedge(\eta) e^{i x \cdot(\zeta + \eta)} d\zeta d\eta
\label{eq:Q:j:k:var1}
\end{align}
where in the second to last line we have used the change of variables in $\zeta$ and $\eta$ by shifting with $\pm k \lambda_{q+1}$, and in the last line we have denoted
\begin{align}
&M_k^{m\ell}(\zeta,\eta) \notag\\
&=   - i s^m(\zeta + k \lambda_{q+1},\eta - k \lambda_{q+1})  \frac{k \cdot (\zeta + k \lambda_{q+1})}{|\zeta + k \lambda_{q+1}|} \hat K_{\approx 1}\left(\frac{\zeta}{\lambda_{q+1}} \right)   (\eta^\ell - k^\ell \lambda_{q+1})  \frac{  k \cdot (\eta - k \lambda_{q+1}) }{|\eta - k \lambda_{q+1}|} \hat K_{\approx 1}\left(\frac{\eta}{\lambda_{q+1}} \right)
\notag\\
&=: \int_0^1 M^{m\ell}_{k,r}(\zeta,\eta) dr.
\label{eq:Q:j:k:var2}
\end{align}
and
\begin{align}
M^{m\ell}_{k,r}(\zeta,\eta) 
&=   \frac{\big( (1-r) \eta   - r \zeta - k \lambda_{q+1} \big)^m}{| (1-r) \eta   - r \zeta - k \lambda_{q+1}|}  (\eta^\ell - k^\ell \lambda_{q+1}) \notag\\
&\qquad \qquad \times    \frac{k \cdot (\zeta + k \lambda_{q+1})}{|\zeta + k \lambda_{q+1}|}  \frac{  k \cdot (\eta - k \lambda_{q+1}) }{|\eta - k \lambda_{q+1}|} \hat K_{\approx 1}\left(\frac{\zeta}{\lambda_{q+1}} \right)  \hat K_{\approx 1}\left(\frac{\eta}{\lambda_{q+1}} \right) .
\label{eq:Q:j:k:var22}
\end{align}
We observe here that the multiplier $M_{kr}^{m\ell}$ defined in \eqref{eq:Q:j:k:var22} has two important features: the 
first concerns smoothness and  will allow us to establish bounds on the induced bilinear pseudo-product operator, while the second concerns
structure, and  allows us to define the principal term in ${\mathcal Q}_{j,k}^{m\ell}$ and cancel the leading order term in the
oscillation stress $R_{O,{\rm low}}$.

First, we note that by \eqref{eq:Q:j:k:var2}, we have that 
\begin{align}
M_{k,r}^{m\ell}(\zeta,\eta) =  \lambda_{q+1} (M_{k,r}^*)^{m\ell}\left(\frac{\zeta}{\lambda_{q+1}}, \frac{\eta}{\lambda_{q+1}}\right)
\label{eq:M:k:r:rescale}
\end{align}
where 
\begin{align}
(M_{k,r}^*)^{m\ell}(\xi_1, \xi_2) =   \frac{\big(  (1-r) \xi_2   - r \xi_1 - k \big)^m}{| (1-r) \xi_2   - r \xi_1 -k |}  (\xi_2^\ell - k^\ell)  \frac{k \cdot (\xi_1 + k )}{|\xi_1 + k|}  \frac{  k \cdot (\xi_2 - k ) }{|\xi_2 - k|} \hat K_{\approx 1}\left( \xi_1 \right)  \hat K_{\approx 1}\left(\xi_2 \right) 
\label{eq:M:k:r:*:def}
\end{align}
for $\xi_1,\xi_2\in\RR^2$. We notice here that $M_{k,r}^*$ is independent of $\lambda_{q+1}$, and that by the definition of $\hat K_{\approx 1}$, the multiplier $M_{k,r}^*$ is supported on $(\xi_1,\xi_2) \in B_{1/8}(0) \times B_{1/8}(0)$. The latter property ensures that $|\xi_1+ k|\geq 1/2$, $|\xi_2-k| \geq 1/2$, and $|k+ (1-r)\xi_1 - r \xi_2|\geq 1/8$. This ensures that the multiplier $M_{k,r}^*$ is infinitely many times differentiable, with bounds that are uniform in $r \in (0,1)$.

Second, we note that from \eqref{eq:Q:j:k:var22} it follows that 
\begin{align}
M_{k,r}^{m\ell}(0,0) =  - \lambda_{q+1} k^m k^\ell
\label{eq:MAGIC:*}
\end{align}
whenever $k \in \SS^1$.
Moreover, by the definition of the inverse Fourier transform, we have that 
\begin{align}
&\frac{1}{(2\pi)^2} \int\int_{\RR^2\times \RR^2}  \left( a_{k,j}  \psi_{q+1,j,k}\right)^\wedge (\zeta)
\left(a_{k,j}  \psi_{q+1,j,-k}\right)^\wedge(\eta) e^{i x \cdot(\zeta + \eta)} d\zeta d\eta  \notag\\
&\qquad = a_{k,j}(x) \psi_{q+1,j,k}(x) a_{k,j}(x) \psi_{q+1,j,-k}(x) \notag\\
&\qquad = a_{k,j}^2(x) \,,
\label{eq:Q:j:k:var3}
\end{align}
since by \eqref{eq:psi:q+1:j:k:def},  $\psi_{q+1,j,k}(x) \psi_{q+1,j,-k}(x) = 1$.

Therefore, combining \eqref{eq:Q:j:k:var1}--\eqref{eq:Q:j:k:var3}, we decompose ${\mathcal Q}_{j,k}^{m\ell}$ as a principal and commutator term:
\begin{align}
{\mathcal Q}_{j,k}^{m\ell}
&=  - \frac{\lambda_{q+1}}{2} \chi_j^2  (k \otimes k)^{ m \ell}  a_{k,j}^2  + \tilde {\mathcal Q}_{j,k}^{m\ell} \notag \\
&=   \frac{\lambda_{q+1}}{2} \chi_j^2  \left( k^{\perp}\otimes k^{\perp} - \Id\right)^{ml} a_{k,j}^2  + \tilde {\mathcal Q}_{j,k}^{m\ell}.
\label{eq:Q:j:k:decompose}
\end{align}
where,t since $|k|=1$,  $\Tr(k^\perp \otimes k^\perp) = 1$, and where 
\begin{align*}
\tilde {\mathcal Q}_{j,k}^{m\ell}(x)  
&= \frac{\chi_j^2}{2(2\pi)^2} \int_0^1  \intint_{\RR^2\times\RR^2} \left( M_{k,r}^{m\ell}(\zeta,\eta) - M_{k,r}^{m\ell}(0,0) \right) \notag\\
&\qquad \qquad \qquad \qquad \times \left( a_{k,j}  \psi_{q+1,j,k}\right)^\wedge(\zeta)  \left(a_{k,j}  \psi_{q+1,j,-k}\right)^\wedge(\eta) e^{i x \cdot(\zeta + \eta)} d\zeta d\eta dr.
\end{align*}
Next, we use
the mean value theorem together with \eqref{eq:M:k:r:rescale}, and find that
\begin{align}
\tilde {\mathcal Q}_{j,k}^{m\ell}(x) 
&= \frac{\chi_j^2}{2(2\pi)^2} \int_0^1 \!\! \int_0^1 \intint_{\RR^2\times\RR^2}\left( (\zeta \cdot \nabla_\zeta + \eta \cdot \nabla_\eta) M_{k,r}^{m\ell}\right) (\bar r \zeta,\bar r \eta) \notag\\
&\qquad \qquad \qquad \qquad \times   \left( a_{k,j}  \psi_{q+1,j,k}\right)^{\wedge}(\zeta)  \left(a_{k,j}  \psi_{q+1,j,-k}\right)^\wedge(\eta) e^{i x \cdot(\zeta + \eta)} d\zeta d\eta d\bar r dr  \notag\\
&= \frac{ \chi_j^2}{2(2\pi)^2}\int_0^1 \!\! \int_0^1\intint_{\RR^2\times\RR^2} \left(  - i \nabla_{\xi_1}  (M_{k,r}^*)^{m\ell}\right) \left(\frac{\bar r \zeta}{\lambda_{q+1}}, \frac{\bar r \eta}{\lambda_{q+1}}\right)  \cdot  \left(\nabla (a_{k,j}  \psi_{q+1,j,k})\right)^\wedge(\zeta) \notag\\
&\qquad \qquad \qquad \qquad \times \left(a_{k,j}  \psi_{q+1,j,-k}\right)^\wedge(\eta) e^{i x \cdot(\zeta + \eta)} d\zeta d\eta d\bar r  dr \notag\\
&\quad + \frac{ \chi_j^2}{2(2\pi)^2} \int_0^1 \!\! \int_0^1 \intint_{\RR^2\times\RR^2}\left(-i  \nabla_{\xi_2} (M_{k,r}^*)^{m\ell}\right) \left(\frac{\bar r \zeta}{\lambda_{q+1}}, \frac{\bar r \eta}{\lambda_{q+1}}\right)  \cdot \left(\nabla(a_{k,j}  \psi_{q+1,j,-k})\right)^\wedge(\eta) \notag\\
&\qquad \qquad \qquad \qquad \times \left( a_{k,j}  \psi_{q+1,j,k}\right)^\wedge(\zeta) e^{i x \cdot(\zeta + \eta)} d\zeta d\eta d\bar r  dr \notag\\
&= \left(\tilde {\mathcal Q}_{j,k}^{(1)}\right)^{m\ell}(x) + \left(\tilde {\mathcal Q}_{j,k}^{(2)}\right)^{m\ell}(x).
\label{eq:Q:j:k:decompose:nasty}
\end{align}
Note that $\tilde Q^{(1)}_{j,k}$ and $\tilde Q^{(2)}_{j,k}$ are both bilinear pseudo-product operators, and thus similarly to the equivalence between \eqref{eq:SSS:def:real}--\eqref{eq:SSS:def:conv} we may take the inverse Fourier transform of \eqref{eq:Q:j:k:decompose:nasty} with respect to the variable $(\zeta,\eta) \in \RR^4$.
For $(z_1,z_2) \in \RR^2 \times \RR^2$,  we denote the inverse Fourier transforms of the above vectors of multipliers as
\begin{subequations}
\label{eq:multilinear:kernels}
\begin{align}
({\mathcal K}_{k,r,\bar r}^{(1)})^{m\ell} (z_1,z_2) &= \frac{\lambda_{q+1}^4}{\bar r^4} \left(  - i \nabla_{\xi_1}  (M_{k,r}^*)^{m\ell}\right)^{\vee}\left(\frac{\lambda_{q+1} z_1}{\bar r},\frac{\lambda_{q+1} z_2 }{\bar r} \right) , \\
({\mathcal K}_{k,r,\bar r}^{(2)})^{m\ell} (z_1,z_2) &= \frac{\lambda_{q+1}^4}{\bar r^4} \left(  - i \nabla_{\xi_2}  (M_{k,r}^*)^{m\ell}\right)^{\vee}\left(\frac{\lambda_{q+1} z_1}{\bar r},\frac{\lambda_{q+1} z_2 }{\bar r} \right).
\end{align}
\end{subequations}
It follows from basic scaling properties of the Fourier transform that
\begin{align}
\left(\tilde {\mathcal Q}_{j,k}^{(1)}\right)^{m\ell}(x) 
&= \frac{\chi_j^2}{2} \int_0^1\!\!\int_0^1 \!\! \intint_{\RR^2\times\RR^2}  ({\mathcal K}_{k,r,\bar r}^{(1)})^{m\ell} (x-z_1 ,  x-z_2)   \cdot \nabla (a_{k,j}  \psi_{q+1,j,k}) (z_1) \left(a_{k,j}  \psi_{q+1,j,-k}\right)(z_2) dz_1 dz_2 d\bar r dr \notag\\
&=: \chi_j^2 \tilde \SSS_k^{(1),m\ell}\big (\nabla (a_{k,j}  \psi_{q+1,j,k}), a_{k,j}  \psi_{q+1,j,-k} \big)
\label{eq:Q:j:k:1:convolution} \\
\left(\tilde {\mathcal Q}_{j,k}^{(2)}\right)^{m\ell}(x) 
&= \frac{\chi_j^2}{2} \int_0^1\!\!\int_0^1 \!\! \intint_{\RR^2\times\RR^2}  ({\mathcal K}_{k,r,\bar r}^{(2)})^{m\ell} (x-z_1 ,  x-z_2)   \cdot \nabla (a_{k,j}  \psi_{q+1,j,-k}) (z_2) \left(a_{k,j}  \psi_{q+1,j,k}\right)(z_1) dz_1 dz_2 d\bar r dr \notag\\
&=: \chi_j^2 \tilde \SSS_k^{(2),m\ell}\big (a_{k,j}  \psi_{q+1,j,k} , \nabla (a_{k,j}  \psi_{q+1,j,-k})\big)
\label{eq:Q:j:k:2:convolution}
\end{align}
Here as usual we have identified the $\TT^2$-periodic functions of $z_1$ and $z_2$ with their periodic extensions to all of $\RR^2$.
The precise form of the above kernels appearing in \eqref{eq:Q:j:k:1:convolution}--\eqref{eq:Q:j:k:2:convolution} is not  important. The only important property of these kernels, which we will use repeatedly when bounding these bilinear convolution operators, is that 
for $i \in \{1,2\}$, with the notation $z = (z_1,z_2) \in \RR^2\times \RR^2$, we have that
\begin{align}
\norm{ z^a  \nabla_z^b ({\mathcal K}_{k,r,\bar r}^{(i)})^{m\ell}}_{L^1_{z_1,z_2}(\RR^2 \times\RR^2)}
&\leq C_{a,b} \left(\frac{\lambda_{q+1}}{\bar r}\right)^{|b|-|a|}
\label{eq:K:k:r:bounds}
\end{align}
uniformly for $r\in (0,1)$, and for all $0\leq |a|,|b|\leq 1$. The bound \eqref{eq:K:k:r:bounds} follows upon rescaling from the fact that the multiplier $M_{k,r}^*$ defined in \eqref{eq:M:k:r:*:def} is in $C_0^\infty( B_{1/8}(0) \times B_{1/8}(0))$, and thus so are $\partial_{\xi_1}M_{k,r}^*$ and $\partial_{\xi_2} M_{k,r}^*$.

In summary, the  decomposition \eqref{eq:Q:j:k:decompose}--\eqref{eq:Q:j:k:decompose:nasty}, allows us to split the low frequency part of the oscillation error, defined by \eqref{eq:RO:decompose}, as
\begin{align*}
R_{O,{\rm low}} = \mathring O_1 + \mathring O_2
\end{align*}
where the principal term $\mathring O_1$ is the traceless part of
\begin{align}
O_1 = \sum_j \chi_j^2  \mathring R_{q,j} + \frac{\lambda_{q+1}}{2} \sum_{j,k}   \left(  k^{\perp}\otimes k^{\perp} - \Id \right)  \chi_j^2 a_{k,j}^2  
\label{eq:O1:def}
\end{align}
while $\mathring O_2$ is the traceless part of the commutator terms,  given by
\begin{align}
O_2 =   \sum_{j,k} \tilde {\mathcal Q}_{j,k}^{(1)} + \tilde {\mathcal Q}_{j,k}^{(2)} = O_{21} + O_{22}.
\label{eq:O2:def}
\end{align}

The key observation here is that the $\mathring O_1$ term vanishes. Indeed,  by the definition of the $a_k$ in \eqref{eq:ak:def}, and of the functions $\gamma_k$ in \eqref{e:split}, we have
\begin{align*}
\frac{\mathring R_{q,j}}{\lambda_{q+1}} + \frac 12 \sum_{k \in \Omega_j} a_{k,j}^2 ( k^\perp \otimes k^\perp )=   \rho_j  \Id\,,
\end{align*}
which shows that $\mathring O_1=0$, since the traceless part of a multiple of the identity is the zero matrix.
 Therefore, we may summarize our  computations in this section as
\begin{align}
R_{O,{\rm low}} =  \mathring O_{21} + \mathring O_{22}
\label{eq:RO:low:final}
\end{align}
with $O_{21}$ and $O_{22}$ as defined by \eqref{eq:Q:j:k:decompose:nasty}--\eqref{eq:O2:def}.

\subsubsection{Amplitude of the $R_{O,{\rm approx}}$ stress}

In order to bound $R_{O,{\rm approx}}$, we recall cf.~\eqref{eq:R:q:transported}   that
\begin{align*}
\left( \mathring R_q - \mathring R_{q,j}\right)(x,j \tau_{q+1}) = 0
\end{align*}
where $j \tau_{q+1}$ is the center of the time-support of $\chi_{j}$, and  moreover 
\begin{align}
D_{t,q} \left( \mathring R_q - \mathring R_{q,j}\right) = D_{t,q} \mathring R_q.
\label{eq:material:RO:approx}
\end{align}
We may thus appeal to the inductive assumption \eqref{eq:ind:q:4} and the transport estimate~\eqref{eq:max:prin} to find that
\begin{align}
\norm{ \mathring R_q - \mathring R_{q,j}}_{C^0(\supp \chi_j)} 
&\lesssim \tau_{q+1} \lambda_{q}^2 \delta_{q}^{1/2} \lambda_{q+1} \delta_{q+1} \notag\\
&\lesssim \lambda_q \delta_q^{1/4} \delta_{q+1}^{3/4} \,.
\label{eq:RO:approx:amplitude}
\end{align}
Upon summing over $j$, we arrive at 
\begin{align}
\label{eq:R_O_approx_est}
\norm{R_{O,{\rm approx}}}_{C^0} \leq \sum_j \chi_j^2 \norm{ \mathring R_q - \mathring R_{q,j}}_{C^0(\supp \chi_j)}  \leq \lambda_q \delta_q^{1/4} \delta_{q+1}^{3/4}\,.
\end{align}

\subsubsection{Amplitude of the $R_{O,{\rm low}}$ stress}

The map from matrices to their traceless part is clearly bounded and thus in view of \eqref{eq:RO:low:final}, we need to estimate $O_{21}$ and $O_{22}$. We only show the estimate for $O_{21}$, since the one for $O_{22}$ is identical, upon changing $(1)$ with $(2)$ below. For this purpose we use \eqref{eq:Q:j:k:1:convolution}--\eqref{eq:Q:j:k:2:convolution}  and the kernel estimate \eqref{eq:K:k:r:bounds} with $|a|=|b|=0$ to conclude  that
\begin{align}
\norm{O_{21}}_{C^0}  
&\lesssim   \sum_{j,k}  \chi_j^2 \norm{\tilde \SSS_k^{(1),m\ell}\big (\nabla (a_{k,j}  \psi_{q+1,j,k}), a_{k,j}  \psi_{q+1,j,-k} \big)}_{C^0} \notag\\
&\lesssim   \sum_{j,k}  \chi_j^2 \norm{\nabla (a_{k,j} \psi_{q+1,k,j})}_{C^0} \norm{a_{k,j} \psi_{q+1,j,-k}}_{C^0} \sup_{r,\bar r\in(0,1)}\norm{  ({\mathcal K}_{k,r,\bar r}^{(1)} )^{m\ell}}_{L^1(\RR^2 \times\RR^2)} \notag\\
&\lesssim  \delta_{q+1}^{1/2} \left(\lambda_q \delta_{q+1}^{1/2} + \delta_{q+1}^{1/2} \tau_{q+1} \lambda_{q+1} \lambda_q^2 \delta_{q}^{1/2} \right) \notag\\
&\lesssim  \delta_{q+1}  \tau_{q+1} \lambda_{q+1}  \lambda_q^2 \delta_{q}^{1/2} \notag\\
&=   \lambda_{q} \delta_q^{1/4} \delta_{q+1}^{3/4} \,. \notag
\end{align}
Therefore
\begin{equation}\label{eq:R_O_low}
\norm{R_{O,{\rm low}}}_{C^0}   \leq \lambda_{q} \delta_q^{1/4} \delta_{q+1}^{3/4}\,.
\end{equation}

\subsubsection{Amplitude of the $R_{O,{\rm high}}$ stress}
We recall cf.~\eqref{eq:R:O:high} that 
\begin{subequations}
\label{eq:RO:high:decompose}
\begin{align}
 R_{O,{\rm high}} &= O_3 - O_4 \\
 O_3 &=\BB \tilde P_{\approx \lambda_{q+1}}   \sum_{\underset{k+k'\neq 0 } {j,j',k,k'}} \big(\Lambda \PP_{q+1,k} \tilde w_{q+1,j,k}\big) \cdot \nabla \big(\PP_{q+1,k'} \tilde w_{q+1,j',k'}\big)   \\
O_4 &= \BB  \tilde P_{\approx \lambda_{q+1}}  \sum_{\underset{k+k'\neq 0 } {j,j',k,k'}} \big(\nabla \PP_{q+1,k} \tilde w_{q+1,j,k}\big)^T \cdot \big( \Lambda  \PP_{q+1,k'} \tilde w_{q+1,j',k'} \big) .
\end{align}
\end{subequations}

\noindent{\bf The $O_3$ estimate.} Appealing to \eqref{eq:local:w:q+1} and \eqref{eq:Lambda:w:q+1}, we have that
\begin{align}
O_3
&=\BB \tilde P_{\approx \lambda_{q+1}} \lambda_{q+1}\sum_{\underset{k+k'\neq 0 } {j,j',k,k'}}  \div \left( \big(\tilde w_{q+1,j,k}\big) \otimes \big( \tilde w_{q+1,j',k'}\right) \big)   \notag\\
&\quad + \BB \tilde P_{\approx \lambda_{q+1}} \div \Bigg(\sum_{\underset{k+k'\neq 0 } {j,j',k,k'}} \big(\lambda_{q+1} \tilde w_{q+1,j,k}\big) \otimes \big( \chi_{j'}  \big[\PP_{q+1,k'}, a_{k',j'}  \psi_{q+1,j',k'} \big] b_{k'}( \lambda_{q+1} x )\big)  \Bigg) \notag\\
&\quad + \BB \tilde P_{\approx \lambda_{q+1}} \div \Bigg(\sum_{\underset{k+k'\neq 0 } {j,j',k,k'}} \big(\chi_{j}    [\PP_{q+1,k} \Lambda, a_{k,j}  \psi_{q+1,j,k} ] b_k( \lambda_{q+1} x )\big) \otimes \big(\PP_{q+1,k'} \tilde w_{q+1,j',k'}\big)  \Bigg) \notag\\
&=: O_{31} + O_{32} + O_{33}.
\label{eq:O3:decompose}
\end{align}
For $O_{31}$, we need to compute carefully the divergence before estimating it. From \eqref{eq:Beltrami_identity},
\begin{align*}
O_{31}
=& \lambda_{q+1}  \BB  \tilde P_{\approx \lambda_{q+1}}  \Bigg(\sum_{  \underset{k+k'\neq 0 } {j,j',k,k'}    } \chi_{j}\chi_{j'} \left(b_{k'}( \lambda_{q+1} x) \otimes   b_{k}( \lambda_{q+1} x)-\frac12 b_{k'}( \lambda_{q+1} x) \cdot   b_{k}( \lambda_{q+1} x)\Id\right)
\notag \\
    &\qquad\qquad \qquad \qquad \times \nabla\left( a_{k,j} \psi_{q+1,j,k} a_{k'} \psi_{q+1,j',k'}\right) \Bigg)
\end{align*}
We thus obtain from Lemma~\ref{lem:D:N:a:psi} that 
\begin{align}
\|O_{31}\|_{C^0} 
&\lesssim \sum_{j,j',k,k'} \norm{ \chi_{j}\chi_{j'}  \nabla\left( a_{k,j} \psi_{q+1,j,k} a_{k'} \psi_{q+1,j',k'}\right)}_{C^0} \notag\\
&\lesssim \lambda_q \delta_{q+1} + \delta_{q+1} \tau_{q+1} \lambda_{q+1} \lambda_q^2 \delta_{q}^{1/2} \notag\\
&\lesssim \lambda_q \delta_{q}^{1/4}\delta_{q+1}^{3/4}\,.
\label{eq:sharp:2}
\end{align}
Using the commutator estimate \eqref{eq:comm:multiplication} (with $s=0$ and $\lambda = \lambda_{q+1}$; and respectively with $s=1$ and $\lambda = \lambda_{q+1}$) and Lemma~\ref{lem:D:N:a:psi}, we have
\begin{align*}
\norm{O_{32}}_{C^0} + \norm{O_{33}}_{C^0}
&\lesssim \sum_{j,j',k,k'}   \norm{\tilde w_{q+1,j,k}}_{C^0}   \norm{ \nabla( a_{k',j'} \psi_{q+1,j',k'})}_{C^0(\supp(\chi_{j'})} \notag \\
&\lesssim   \delta_{q+1}^{1/2}  \left( \lambda_q \delta_{q+1}^{1/2} + \delta_{q+1}^{1/2} \tau_{q+1} \lambda_{q+1} \lambda_q^2 \delta_{q}^{1/2} \right) \notag\\
&\lesssim  \lambda_q \delta_{q}^{1/4}\delta_{q+1}^{3/4}\,.
\end{align*}
Combining the above  estimates yields
\begin{align}
\label{eq:O_3_bound}
\norm{O_3}_{C^0} \leq \lambda_q \delta_{q}^{1/4}\delta_{q+1}^{3/4}.
\end{align}

\noindent{\bf The $O_4$ estimate.}
In order to bound the $O_4$ part of the oscillation error, we note that 
\begin{align}
\BB \left(  (\nabla \tilde w_{q+1,j,k})^T \cdot \tilde w_{q+1,j',k'}  + (\nabla \tilde w_{q+1,j',k'})^T \cdot \tilde w_{q+1,j,k} \right) = \frac{1}{2}  \BB  \left( \nabla \left( \tilde w_{q+1,j,k} \cdot \tilde w_{q+1,j',k'} \right) \right)= 0
\end{align}
since $\BB$ contains the Leray projector. Therefore, using \eqref{eq:local:w:q+1} and \eqref{eq:Lambda:w:q+1} we obtain
\begin{align}
O_4 
&=  \BB \tilde P_{\approx \lambda_{q+1}} \Bigg(\sum_{\underset{k+k'\neq 0 } {j,j',k,k'}} 
\left( \chi_j \nabla \big[ \PP_{q+1,k}, a_{k,j} \psi_{q+1,j,k}] b_k(\lambda_{q+1} x) \right)^T \cdot  
\left(\lambda_{q+1} \tilde w_{q+1,j',k'} \right) \Bigg) \notag\\
&\quad+ \BB \tilde P_{\approx \lambda_{q+1}} \Bigg(\sum_{\underset{k+k'\neq 0 } {j,j',k,k'}} 
\left( \nabla  \PP_{q+1,k} \tilde w_{q+1,j,k}\right)^T \cdot  \left( \chi_{j'}  \left( \big[ \PP_{q+1,k'} \Lambda, a_{k',j'} \psi_{q+1,j',k'} \big] b_{k'}(\lambda_{q+1} x) \right) \right) \Bigg) \notag\\
&= O_{41} + O_{42}.
\label{eq:O4:decompose}
\end{align}
Appealing to the formula
\begin{align*}
\nabla \big[ \PP_{q+1,k}, a_{k,j} \psi_{q+1,j,k}] b_k(\lambda_{q+1} x) =  \big[ \PP_{q+1,k}, \nabla(a_{k,j} \psi_{q+1,j,k})] b_k(\lambda_{q+1} x) + \big[ \PP_{q+1,k}, a_{k,j} \psi_{q+1,j,k}]  \nabla b_k(\lambda_{q+1} x) ,
\end{align*}
the commutator estimate \eqref{eq:comm:multiplication} (with $s= 0$ and $\lambda = \lambda_{q+1}$), and Lemma~\ref{lem:D:N:a:psi}, we
obtain the bound
\begin{align*}
\norm{O_{41}}_{C^0} 
&\lesssim \lambda_{q+1}^{-1} \sum_{j,j',k,k'} \left( \lambda_{q+1}^{-1} \norm{\nabla^2 (a_{k,j} \psi_{q+1,j,k})}_{C^0(\supp \chi_j)} +    \norm{\nabla (a_{k,j} \psi_{q+1,j,k})}_{C^0(\supp \chi_j)}   \right) \lambda_{q+1} \norm{\tilde w_{q+1,j',k'}}_{C^0} \notag\\
&\lesssim \delta_{q+1}^{1/2} \left(\lambda_{q+1}^{-1} \lambda_q^2 \delta_{q+1}^{1/2} + \lambda_{q+1}^{-1} \tau_{q+1}^2 \lambda_{q+1}^2 \lambda_q^4 \delta_q + \lambda_q \delta_{q+1}^{1/2} + \delta_{q+1}^{1/2}  \tau_{q+1} \lambda_{q+1} \lambda_q^2 \delta_q^{1/2} \right)  \notag\\
&\lesssim \frac{\lambda_q^2 \delta_q^{1/2} \delta_{q+1}^{1/2}}{\lambda_{q+1}} + \lambda_q \delta_q^{1/4} \delta_{q+1}^{3/4} \notag\\
&\lesssim \lambda_q \delta_q^{1/4} \delta_{q+1}^{3/4}.
\end{align*}
The term $O_{42}$ is bounded similarly, by appealing to the commutator estimate \eqref{eq:comm:multiplication} (with $s=1$ and $\lambda = \lambda_{q+1}$):
\begin{align*}
\norm{O_{42}}_{C^0}   
&\lesssim \lambda_{q+1}^{-1} \sum_{j,j',k,k'} \lambda_{q+1} \norm{\tilde w_{q+1,j,k}}_{C^0} \norm{\nabla ( a_{k',j'} \psi_{q+1,j',k'})}_{C^0 (\supp \chi_{j'})} \notag\\
&\lesssim \delta_{q+1}^{1/2} \left( \lambda_q \delta_{q+1}^{1/2} + \delta_{q+1}^{1/2} \tau_{q+1} \lambda_q^2 \delta_{q}^{1/2} \right) \notag\\
&\leq \lambda_q \delta_q^{1/4} \delta_{q+1}^{3/4}\,.
\end{align*}
Combining the above estimates, we arrive at the  bound
\begin{align}
\label{eq:O_4_bound}
\norm{O_4}_{C^0} \lesssim \lambda_q \delta_q^{1/4} \delta_{q+1}^{3/4}.
\end{align}

\subsubsection{Bound on $R_{O}$}

Combining the estimates \eqref{eq:R_O_approx_est}, \eqref{eq:R_O_low}, \eqref{eq:O_3_bound} and \eqref{eq:O_4_bound} yields
\begin{align*}
\norm{R_O}_{C^0}\lesssim \lambda_q \delta_q^{1/4} \delta_{q+1}^{3/4}= \lambda_0^{-2+\frac{5\beta}{2}}\lambda_{q+2} \delta_{q+2}.
\end{align*}
Assuming $\lambda_0$ is sufficiently large and $\beta<\sfrac 45$ we obtain our claim.

\subsubsection{Material derivative  of the $R_{O,{\rm approx}}$ stress}
We recall cf.~\eqref{eq:RO:decompose} and \eqref{eq:material:RO:approx} that
\begin{align*}
 D_{t,q} R_{O,{\rm approx}} 
 &= \sum_j \chi_j^2 D_{t,q}\Big( \mathring R_q - \mathring R_{q,j} \Big) + 2 \sum_j \chi_j \chi_j' \Big( \mathring R_q - \mathring R_{q,j} \Big) \notag\\
 &= \sum_j \chi_j^2 \Big(D_{t,q} \mathring R_q\Big) + 2  \sum_j \chi_j \chi_j' \Big( \mathring R_q - \mathring R_{q,j} \Big).
\end{align*}
Therefore, using the inductive estimate \eqref{eq:ind:q:4}, the fact that the $\chi_j^2$ form a partition of unity, and the previously established bound \eqref{eq:RO:approx:amplitude}, we obtain that
\begin{align*}
 \norm{D_{t,q} R_{O,{\rm approx}}}_{C^0}
 &\lesssim \lambda_q^2 \delta_q^{1/2} \lambda_{q+1} \delta_{q+1} + \tau_{q+1}^{-1} \left(\tau_{q+1} \lambda_q^2 \delta_q^{1/2} \lambda_{q+1} \delta_{q+1}\right) \notag\\
 &\lesssim  \lambda_q^2 \delta_q^{1/2} \lambda_{q+1} \delta_{q+1}\,.
\end{align*}

\subsubsection{Material derivative  of the $R_{O,{\rm low}}$ stress}
Recall cf.~\eqref{eq:RO:low:final} that $R_{O,{\rm low}} = \mathring O_{21}+\mathring O_{22}$, terms which are defined in \eqref{eq:O2:def}, with  \eqref{eq:Q:j:k:1:convolution}--\eqref{eq:Q:j:k:2:convolution}. Trivially,
\begin{align*}
\norm{D_{t,q} \left(\mathring O_{21}+\mathring O_{22}\right)}_{C^0}\leq \norm{D_{t,q}\left(O_{21}+ O_{22}\right)}_{C^0} \,.
\end{align*}
We only show the estimate for $O_{21}$, since the one for $O_{22}$ is identical, upon changing $(1)$ with $(2)$ below.  Note that in view of \eqref{eq:K:k:r:bounds} the bilinear convolution operators defining ${\tilde Q}_{j,k}^{(1)}$ has a kernel which obeys the conditions of Lemma~\ref{lem:multi:Dt:commutator}. Moreover, by construction we have
\[ 
D_{t,q}( a_{k,j} \psi_{q+1,j,k} ) = D_{t,q}( a_{k,j} \psi_{q+1,j,-k} ) = 0
\]
and thus, using the notation \eqref{eq:comm:Dt:SM} we obtain
\begin{align*}
&D_{t,q} \left( \tilde \SSS_{k}^{(1),m\ell} ( \nabla (a_{k,j}\psi_{q+1,j,k}), a_{k,j} \psi_{q+1,j,-k}) \right) \notag\\
&\quad = \tilde \SSS_{k}^{(1),m\ell} (D_{t,q}\left( \nabla (a_{k,j}\psi_{q+1,j,k})\right), a_{k,j} \psi_{q+1,j,-k}) 
+ \tilde \SSS_{k}^{(1),m\ell} ( \nabla (a_{k,j}\psi_{q+1,j,k}),D_{t,q}( a_{k,j} \psi_{q+1,j,-k})) \notag\\
&\quad \qquad + \left[ D_{t,q}, \tilde \SSS_{k}^{(1),m\ell} \right](\nabla (a_{k,j}\psi_{q+1,j,k}), a_{k,j} \psi_{q+1,j,-k})
\notag\\
&\quad = \tilde \SSS_{k}^{(1),m\ell} (\nabla u_q \cdot \nabla (a_{k,j}\psi_{q+1,j,k}) , a_{k,j} \psi_{q+1,j,-k}) 
+ \left[ D_{t,q}, \tilde \SSS_{k}^{(1),m\ell} \right](\nabla (a_{k,j}\psi_{q+1,j,k}), a_{k,j} \psi_{q+1,j,-k}).
\end{align*}
From \eqref{eq:K:k:r:bounds} and Lemma~\ref{lem:multi:Dt:commutator} it then follows that
\begin{align*}
\norm{D_{t,q} O_{21}}_{C^0}
&\lesssim \sum_{j,k} \norm{\chi_j}_{C^0} \norm{\chi_j'}_{C^0} \norm{ \tilde \SSS_{k}^{(1),m\ell} ( \nabla (a_{k,j}\psi_{q+1,j,k}), a_{k,j} \psi_{q+1,j,-k})}_{C^0} \notag\\
&\qquad + \sum_{j,k} \norm{\chi_j^2}_{C^0} \norm{ \tilde \SSS_{k}^{(1),m\ell} (\nabla u_q \cdot \nabla (a_{k,j}\psi_{q+1,j,k}), a_{k,j} \psi_{q+1,j,-k})}_{C^0} \notag\\
&\qquad + \sum_{j,k} \norm{\chi_j^2}_{C^0} \norm{\left[ D_{t,q}, \tilde \SSS_{k}^{(1),m\ell} \right](\nabla (a_{k,j}\psi_{q+1,j,k}), a_{k,j} \psi_{q+1,j,-k})}_{C^0} \notag\\
&\lesssim \sum_{j,k} \norm{\chi_j}_{C^0} \norm{\chi_j'}_{C^0} \norm{ \nabla (a_{k,j}\psi_{q+1,j,k})}_{C^0} \norm{a_{k,j} \psi_{q+1,j,-k}}_{C^0} \notag\\
&\qquad + \sum_{j,k} \norm{\chi_j^2}_{C^0} \norm{ \nabla u_q}_{C^0} \norm{\nabla (a_{k,j}\psi_{q+1,j,k})}_{C^0} \norm{a_{k,j} \psi_{q+1,j,-k}}_{C^0} \notag\\
&\lesssim (\tau_{q+1}^{-1} + \lambda_q^2 \delta_{q}^{1/2}) (\lambda_q \delta_{q+1}^{1/2} + \delta_{q+1}^{1/2} \tau_{q+1}\lambda_{q+1} \lambda_q^2 \delta_q^{1/2} ) \delta_{q+1}^{1/2} \notag\\
&\lesssim \lambda_q^2\lambda_{q+1}\delta_q^{1/2}\delta_{q+1}
\end{align*}
The estimate for $O_{22}$ is similar, and we obtain that
\begin{align*}
\norm{D_{t,q} R_{O,{\rm low}}}_{C^0} \leq \lambda_q^2\lambda_{q+1}\delta_q^{1/2}\delta_{q+1}\,.
\end{align*}

\subsubsection{Material derivative  of the $R_{O,{\rm high}}$ stress}

Recall cf.~\eqref{eq:RO:high:decompose}, the decomposition  of $R_{O,{\rm high}} = O_3 - O_4$.
Applying $D_{t,q} = \partial_t + u_q \cdot \nabla$ to the $O_3$ equation,  we find that
\begin{align*}
 D_{t,q} O_3 
 &= \left[D_{t,q},\BB \tilde P_{\approx \lambda_{q+1}}\right]   \sum_{\underset{k+k'\neq 0 } {j,j',k,k'}} \big(\Lambda \PP_{q+1,k} \tilde w_{q+1,j,k}\big) \cdot \nabla \big(\PP_{q+1,k'} \tilde w_{q+1,j',k'}\big)    \notag\\
 &\quad + \BB \tilde P_{\approx \lambda_{q+1}}  \sum_{\underset{k+k'\neq 0 } {j,j',k,k'}} \big(\left[D_{t,q}, \Lambda \PP_{q+1,k}\right] \tilde w_{q+1,j,k}\big) \cdot \nabla \big(\PP_{q+1,k'} \tilde w_{q+1,j',k'}\big)    \notag\\
 &\quad + \BB \tilde P_{\approx \lambda_{q+1}}  \sum_{\underset{k+k'\neq 0 } {j,j',k,k'}} \big(  \Lambda \PP_{q+1,k} (D_{t,q} \tilde w_{q+1,j,k}) \big) \cdot \nabla \big(\PP_{q+1,k'} \tilde w_{q+1,j',k'}\big)    \notag\\
&\quad + \BB \tilde P_{\approx \lambda_{q+1}} \sum_{\underset{k+k'\neq 0 } {j,j',k,k'}} \big( \Lambda \PP_{q+1,k}  \tilde w_{q+1,j,k}\big) \cdot \left[D_{t,q},\nabla \PP_{q+1,k'} \right]\big(\tilde w_{q+1,j',k'}\big)   \notag\\
&\quad + \BB \tilde P_{\approx \lambda_{q+1}} \sum_{\underset{k+k'\neq 0 } {j,j',k,k'}} \big( \Lambda \PP_{q+1,k}  \tilde w_{q+1,j,k}\big) \cdot  \nabla \PP_{q+1,k'} \big(D_{t,q} \tilde w_{q+1,j',k'}\big) \notag\\
&=: \tilde O_{31} + \tilde O_{32} + \tilde O_{33} + \tilde O_{34} + \tilde O_{35}.
\end{align*}
The term $\tilde O_{31}$ is bounded directly using Corollary~\ref{cor:Dt:commutator}, estimate \eqref{eq:comm:material:convolution}, with $\lambda = \lambda_{q+1}$ and $s=-1$ as
\begin{align*}
 \norm{\tilde O_{31}}_{C^0} 
 &\lesssim \lambda_{q+1}^{-1} \norm{\nabla u_q}_{C^0} \sum_{j,j',k,k'} \lambda_{q+1} \norm{\tilde w_{q+1,j,k}}_{C^0} \lambda_{q+1} \norm{\tilde w_{q+1,j',k'}}_{C^0} \notag\\
 &\lesssim \lambda_{q+1} \lambda_q^2 \delta_q^{1/2} \delta_{q+1} \,.
\end{align*}
Similarly, the terms $\tilde O_{32}$ and $\tilde O_{34}$ are bounded  using  \eqref{eq:comm:material:convolution} with $\lambda = \lambda_{q+1}$ and $s= 1$ as
\begin{align*}
\norm{\tilde O_{32}}_{C^0} +\norm{\tilde O_{34}}_{C^0}
&\lesssim  \lambda_{q+1}^{-1} \sum_{j,j',k,k'} \lambda_{q+1} \norm{\nabla u_q}_{C^0} \norm{\tilde w_{q+1,j,k}}_{C^0} \lambda_{q+1} \norm{\tilde w_{q+1,j,k}}_{C^0}\\
&\lesssim \lambda_{q+1} \lambda_q^2 \delta_q^{1/2} \delta_{q+1}  \,.
\end{align*}
In order to bound $\tilde O_{33}$ and $\tilde O_{35}$ we note that  by the construction of $w_{q+1}$ we have that 
\begin{align*}
D_{t,q}\tilde w_{q+1,j,k}(x,t)=\chi'_{j}(t)  a_{k,j}(x,t) b_k( \lambda_{q+1} \Phi_j(x,t) ) = \frac{\chi'_j(t)}{\chi_j(t)} \tilde w_{q+1,j,k}(x,t).
\end{align*}
This fact allows us to rewrite $\tilde O_{33}$ and $\tilde O_{35}$ as follows:
\begin{align*}
 \tilde O_{33} &=  \BB \tilde P_{\approx \lambda_{q+1}}  \sum_{\underset{k+k'\neq 0 } {j,j',k,k'}} \frac{\chi_j'}{\chi_j} \big(  \Lambda \PP_{q+1,k} (\tilde w_{q+1,j,k}) \big) \cdot \nabla \big(\PP_{q+1,k'} \tilde w_{q+1,j',k'}\big)\,, \\
 \tilde O_{35} &= \BB \tilde P_{\approx \lambda_{q+1}} \sum_{\underset{k+k'\neq 0 } {j,j',k,k'}} \frac{\chi_{j'}'}{\chi_{j'}} \big( \Lambda \PP_{q+1,k}  \tilde w_{q+1,j,k}\big) \cdot  \nabla \PP_{q+1,k'} \big( \tilde w_{q+1,j',k'}\big)\,,
\end{align*}
and upon noting that 
\begin{align*}
\norm{\frac{\chi_j'}{\chi_j}} + \norm{\frac{\chi_{j'}'}{\chi_{j'}}} \lesssim \tau_{q+1}^{-1} \,,
\end{align*}
we may use the bounds previously established for $O_3$ to conclude that
\begin{align*}
 \norm{\tilde O_{33}}_{C^0} +\norm{\tilde O_{35}}_{C^0} 
 &\lesssim \tau_{q+1}^{-1} \norm{O_3}_{C^0} \notag\\
 &\lesssim \tau_{q+1}^{-1} \lambda_q \delta_q^{1/4} \delta_{q+1}^{3/4} \notag\\
 &\leq  \lambda_{q+1} \lambda_q^2 \delta_q^{1/2} \delta_{q+1} \,.
\end{align*}

Bounding the material derivative of $O_4$ is very similar. We first  write $D_{t,q} O_4 $ as
\begin{align*}
 D_{t,q} O_4 
 &= \left[D_{t,q}, \BB \tilde P_{\approx \lambda_{q+1}}\right] \sum_{\underset{k+k'\neq 0 } {j,j',k,k'}} \big(\nabla \PP_{q+1,k} \tilde w_{q+1,j,k}\big)^T \cdot \big( \Lambda  \PP_{q+1,k'} \tilde w_{q+1,j',k'} \big) \notag\\
 &\qquad + \BB \tilde P_{\approx \lambda_{q+1}} \sum_{\underset{k+k'\neq 0 } {j,j',k,k'}} \big(\big[D_{t,q},\nabla \PP_{q+1,k}\big] \tilde w_{q+1,j,k}\big)^T \cdot \big( \Lambda  \PP_{q+1,k'} \tilde w_{q+1,j',k'} \big) \notag\\
&\qquad + \BB \tilde P_{\approx \lambda_{q+1}} \sum_{\underset{k+k'\neq 0 } {j,j',k,k'}} \frac{\chi_j'}{\chi_j} \big(\nabla \PP_{q+1,k} \tilde w_{q+1,j,k}\big)^T \cdot \big( \Lambda  \PP_{q+1,k'} \tilde w_{q+1,j',k'} \big) \notag\\
&\qquad + \BB \tilde P_{\approx \lambda_{q+1}} \sum_{\underset{k+k'\neq 0 } {j,j',k,k'}} \big( \nabla \PP_{q+1,k} \tilde w_{q+1,j,k}\big)^T \cdot \big( \big[ \Lambda  \PP_{q+1,k'}, D_{t,q} \big] \tilde w_{q+1,j',k'} \big) \notag\\
&\qquad + \BB \tilde P_{\approx \lambda_{q+1}} \sum_{\underset{k+k'\neq 0 } {j,j',k,k'}} \frac{\chi_{j'}'}{\chi_{j'}}  \big(\big[D_{t,q},\nabla \PP_{q+1,k}\big] \tilde w_{q+1,j,k}\big)^T \cdot \big( \Lambda  \PP_{q+1,k'} \tilde w_{q+1,j',k'} \big) \,,
\end{align*}
and using similar arguments as above, together with the bound previously established on $O_4$, we obtain that
\begin{align*}
\norm{D_{t,q}O_4}_{C^0} 
&\lesssim \tau_{q+1}^{-1} \norm{O_4}_{C^0} + \lambda_{q+1}^{-1} \norm{\nabla u_q}_{C^0} \sum_{j,j',k,k'} \lambda_{q+1} \norm{\tilde w_{q+1,j,k}}_{C^0} \lambda_{q+1} \norm{\tilde w_{q+1,j',k'}}_{C^0}
\notag\\ 
&\lesssim \tau_{q+1}^{-1}  \lambda_q \delta_q^{1/4} \delta_{q+1}^{3/4} + \lambda_{q+1} \delta_{q+1} \lambda_q^2 \delta_q^{1/2}
\notag\\
&\lesssim \lambda_q^2 \lambda_{q+1} \delta_q^{1/2}\delta_{q+1}  \,,
\end{align*}
which concludes the proof of the material derivative estimate for $R_{O,{\rm high}}$.

\subsubsection{Bound on the material derivative of $R_{O}$}
Combining all the estimates yields
\begin{align*}
\norm{D_{t,q} R_{O}}_{C^0}\lesssim \lambda_q^2 \lambda_{q+1} \delta_q^{1/2}\delta_{q+1}=
\lambda_0^{-3+3\beta}\lambda_{q+1}^{2}\ \delta_{q+1}^{1/2} \lambda_{q+2} \delta_{q+2}
\end{align*}
from which we obtain our desired estimate if $\beta<1$ and $\lambda_0$ is sufficiently large.

\subsection{Concluding bounds on the new Reynolds stress}
Combining all the estimates above,  we now show that for the new Reynolds stress $\mathring{R}_{q+1}$,
 estimate \eqref{eq:ind:q:2} holds with $q$ replaced by $q+1$ as follows:
\begin{proposition}
Assuming $\lambda_0$ is sufficiently large then \eqref{eq:ind:q:2} and \eqref{eq:ind:q:4} hold, and
\begin{align}
\| \mathring R_q\|_{C^0} &\leq \eps_R \lambda_{q+1} \delta_{q+1} \,, \label{eq:R_est_final}\\
\norm{ (\partial_t + u_q \cdot \nabla) \mathring R_q}_{C^0} &\leq  \lambda_q^2 \delta_q^{1/2} \lambda_{q+1} \delta_{q+1}.\label{eq:D_t_R_est_final}
\end{align}
\begin{proof}
The first estimate \eqref{eq:R_est_final} follows directly from Lemmas \ref{lem:transp:error}--\ref{lem:osc:error}. To prove 
\eqref{eq:D_t_R_est_final}, first note that since $\norm{\Lambda w_{q+1}}_{C^0} \lesssim \lambda_{q+1} \delta_{q+1}^{1/2}$, 
and $\mathring R_{q+1}$ has compact support in frequency in the ball of radius $4 \lambda_{q+1}$, we obtain from 
 Lemmas \ref{lem:transp:error}--\ref{lem:osc:error} that if $\lambda_0$ is sufficiently large, then
\begin{align*}
\norm{D_{t,q+1} \mathring R_{q+1}}_{C^0} 
&\leq \norm{D_{t,q} \mathring R_{q+1}}_{C^0} + \norm{\Lambda w_{q+1} \cdot \nabla \mathring R_{q+1}}_{C^0} \\
&\leq \frac12 \lambda_{q+1}^2 \delta_{q+1}^{1/2}  \lambda_{q+2} \delta_{q+2} + \frac12\lambda_{q+1} \delta_{q+1}^{1/2} \lambda_{q+1} \lambda_{q+2} \delta_{q+2}  = \lambda_{q+1}^2 \delta_{q+1}^{1/2}  \lambda_{q+2} \delta_{q+2} \,.
\end{align*}
\end{proof}
\end{proposition}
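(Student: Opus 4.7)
The plan is to combine the four previously-established bounds on the pieces of the decomposition \eqref{eq:R:new:split} of $\mathring R_{q+1}$. Recall that $\mathring R_{q+1}$ is defined (by inverting the divergence and absorbing pressure-like terms into $\tilde p_{q+1}$) through
\[
\div \mathring R_{q+1} = \div R_T + \div R_N + \div R_D + \div R_O + \nabla \tilde p_{q+1}.
\]
Since $\BB$ incorporates the Leray projector and hence kills gradients, I would write $\mathring R_{q+1}$ as the (traceless part of the) sum $R_T + R_N + R_D + R_O$. Then the $C^0$ bound follows by applying Lemmas \ref{lem:transp:error}, \ref{lem:Nash:error}, \ref{lem:disip:error}, and \ref{lem:osc:error} with the common parameter $\eps$ chosen, say, as $\eps = \eps_R/4$: each lemma yields a bound of the form $\eps \lambda_{q+2}\delta_{q+2}$, so summing gives
\[
\|\mathring R_{q+1}\|_{C^0} \le 4 \eps \, \lambda_{q+2}\delta_{q+2} = \eps_R \lambda_{q+2}\delta_{q+2},
\]
which is \eqref{eq:ind:q:2} at level $q+1$. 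This step requires only that $\lambda_0$ be taken large enough for each of the four lemmas to apply simultaneously.

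For the material derivative bound \eqref{eq:ind:q:4} at level $q+1$, I would use that $D_{t,q+1} = D_{t,q} + \Lambda w_{q+1}\cdot \nabla$. Applying this to $\mathring R_{q+1}$ gives two contributions. The first, $D_{t,q}\mathring R_{q+1}$, is controlled by summing the four material-derivative estimates from the lemmas, each of which yields a bound of the form $\eps \lambda_{q+1}^2 \delta_{q+1}^{1/2}\lambda_{q+2}\delta_{q+2}$; choosing $\eps$ small enough (independent of $\lambda_0$) contributes a factor at most $1/2$ to the final estimate. The second contribution, $\Lambda w_{q+1}\cdot \nabla \mathring R_{q+1}$, is handled by a product estimate: the bound \eqref{eq:v+u:q+1} from Lemma \ref{lem:w:q+1:bounds} gives $\|\Lambda w_{q+1}\|_{C^0} \lesssim \lambda_{q+1}\delta_{q+1}^{1/2}$, while the frequency support of $\mathring R_{q+1}$ inside $\{|\xi|\le 4\lambda_{q+1}\}$ together with Bernstein's inequality gives $\|\nabla \mathring R_{q+1}\|_{C^0} \lesssim \lambda_{q+1}\|\mathring R_{q+1}\|_{C^0} \le \lambda_{q+1}\eps_R\lambda_{q+2}\delta_{q+2}$. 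Multiplying and absorbing constants into the smallness of $\eps_R$ (or equivalently into $\lambda_0$), this second term is also bounded by $\tfrac12 \lambda_{q+1}^2\delta_{q+1}^{1/2}\lambda_{q+2}\delta_{q+2}$, completing the inequality.

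There is essentially no obstacle at this stage: the genuinely delicate work — the sharp balancing of the time-step parameter $\tau_{q+1}^{-1} = \lambda_q\lambda_{q+1}\delta_q^{1/4}\delta_{q+1}^{1/4}$ against the oscillation and transport errors, the extraction of the gradient/divergence structure of the low-frequency oscillation term via the pseudo-product operator $\SSS^m$, and the commutator bounds for $[D_{t,q},\BB\tilde P_{\approx\lambda_{q+1}}]$ — was performed in the preceding four lemmas. The only non-bookkeeping point that needs care is the material-derivative term $\Lambda w_{q+1}\cdot\nabla\mathring R_{q+1}$: one must have already fixed $\eps_R$ small enough so that even after Bernstein multiplies by $\lambda_{q+1}$, the resulting contribution still fits within the inductive allowance $\lambda_{q+1}^2\delta_{q+1}^{1/2}\lambda_{q+2}\delta_{q+2}$, which it does thanks to the matching power of $\delta_{q+1}^{1/2}$ coming from $\Lambda w_{q+1}$.
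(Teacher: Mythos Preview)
Your proposal is correct and follows essentially the same approach as the paper: sum the four error lemmas for the $C^0$ bound, then split $D_{t,q+1} = D_{t,q} + \Lambda w_{q+1}\cdot\nabla$ and use the frequency support of $\mathring R_{q+1}$ together with $\|\Lambda w_{q+1}\|_{C^0}\lesssim \lambda_{q+1}\delta_{q+1}^{1/2}$ for the second piece. Your remark that the substantive work has already been absorbed into Lemmas \ref{lem:transp:error}--\ref{lem:osc:error} is exactly right.
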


\section{The Hamiltonian increment}
\label{sec:hamiltonian}
In this section, we conclude the proof of Proposition \ref{prop:main} by showing  that  \eqref{eq:energy_ind} and  \eqref{eq:zero_reynolds} hold with $q$ replaced by $q+1$. We begin by stating some consequences of the inductive estimates in Section \ref{s:inductive_assump}:
\begin{lemma}\label{lem:rho_diff}
If $t$ is in the support of the cut-off function $\chi_j$, then
\begin{equation}\label{eq:rho_diff}
\abs{\int_{\TT^2}\left(\abs{\Lambda^{\sfrac12} v_q(x,t)}^2-\abs{\Lambda^{\sfrac12} v_q(x,\tau_{q+1}j)}^2\right)~dx}+\abs{\ee(t)+\ee(\tau_{q+1}j)}\leq \frac{\lambda_{q+
2}\delta_{q+2}}{16}~.
\end{equation}
Consequently, if $\rho_j\neq 0$ then on the support of $\chi_j$
\begin{equation}\label{eq:rho_j_t}
\lambda_{q+1}\abs{\rho(t)-\rho_j}\leq \frac{\lambda_{q+
2}\delta_{q+2}}{16}\,,
\end{equation}
and by the definition \eqref{eq:rho_def} 
\begin{equation}\label{eq:energy_lower}
\ee(t) - \int_{\TT^2}\abs{\Lambda^{\sfrac12} v_q}^2~dx\geq\frac{7\lambda_{q+
2}\delta_{q+2}}{16}.
\end{equation}\label{eq:rho_vanish}
If $\rho_j=0$  then by the definition \eqref{eq:rho_def}  and \eqref{eq:zero_reynolds} 
\begin{align*}
e(j\tau_{q+1}) - \int_{\TT^2}\abs{\Lambda^{\sfrac12} v_q(x,j\tau_{q+1})}^2~dx \leq \frac{9\lambda_{q+
2}\delta_{q+2}}{16}~\mbox{and}~\mathring R_q(\cdot, t)\equiv 0~.
\end{align*}
\end{lemma}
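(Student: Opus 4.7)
The strategy is to differentiate the Hamiltonian proxy $\mathcal{E}_q(t) := \int_{\TT^2} |\Lambda^{1/2} v_q(x,t)|^2\, dx$ in time using the relaxed momentum equation \eqref{eq:relaxed-SQG}, then integrate over $|t - j\tau_{q+1}| \leq 4\tau_{q+1}$. Pairing \eqref{eq:relaxed-SQG} with $u_q = \Lambda v_q$ in $L^2$, three simplifications take place: the pressure term vanishes by $\div u_q = 0$; the nonlinearity vanishes pointwise, since by \eqref{eq:2D:MAGIC} we have $u_q \cdot \nabla v_q - (\nabla v_q)^T \cdot u_q = -\theta_q\, u_q^\perp$ and $u_q \cdot u_q^\perp \equiv 0$; and the dissipation contributes the non-negative term $\|\Lambda^{(1+\gamma)/2} v_q\|_{L^2}^2$. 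Integration by parts on the Reynolds stress yields the identity
\begin{align*}
\tfrac{1}{2}\tfrac{d}{dt}\mathcal{E}_q(t) = -\|\Lambda^{(1+\gamma)/2} v_q\|_{L^2}^2 - \int_{\TT^2} \nabla u_q : \mathring{R}_q\, dx\,.
\end{align*}

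For the dissipation term, the naive band-limited estimate $\|\Lambda^{(1+\gamma)/2} v_q\|_{L^2}^2 \lesssim \lambda_q^\gamma \mathcal{E}_q$ turns out to be too crude for the regime $\gamma + \beta < 2$. Instead, I would exploit frequency-orthogonality in the telescoping decomposition $v_q = \sum_{q' \leq q-1} w_{q'+1}$: since each $w_{q'+1}$ has Fourier support in the annulus $\{\lambda_{q'}/2 \leq |\xi| \leq 2\lambda_{q'}\}$ (pairwise disjoint across levels) and satisfies $\|w_{q'+1}\|_{C^0} \lesssim \delta_{q'+1}^{1/2}$ by \eqref{eq:shell_est}, one obtains
\begin{align*}
\|\Lambda^{(1+\gamma)/2} v_q\|_{L^2}^2 \lesssim \sum_{q' \leq q-1} \lambda_{q'+1}^{1+\gamma} \delta_{q'+1} \lesssim \lambda_q^{1+\gamma}\delta_q + 1\,.
\end{align*}
For the Reynolds piece, the inductive bounds \eqref{eq:ind:q:1} and \eqref{eq:ind:q:2}, together with band-limitedness of $u_q$, give $\|\nabla u_q\|_{C^0} \lesssim \lambda_q^2 \delta_q^{1/2}$ and $\|\mathring{R}_q\|_{C^0} \lesssim \lambda_{q+1}\delta_{q+1}$. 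Multiplying each contribution by $8\tau_{q+1}$ and unwinding the definitions of $\tau_{q+1}, \delta_q, \lambda_q$, the dissipation contribution becomes $\lesssim \lambda_0^{q(\gamma+\beta-2)+O(1)}$ and the Reynolds contribution becomes $\lambda_q\delta_q^{1/4}\delta_{q+1}^{3/4} = \lambda_0^{q(1-2\beta)+O(1)}$, both of which are dominated by $\lambda_{q+2}\delta_{q+2} = \lambda_0^{q(1-2\beta)+O(1)}$ once $\gamma+\beta < 2$, $\beta < 4/5$, and $\lambda_0$ is taken large enough. The smoothness bound $|\ee(t) - \ee(j\tau_{q+1})| \leq 4\tau_{q+1} \|\ee'\|_{C^0}$ is handled identically, establishing \eqref{eq:rho_diff}.

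The consequences \eqref{eq:rho_j_t}--\eqref{eq:rho_vanish} are then structural. Interpreting \eqref{eq:rho_def} with the normalization that makes $\rho \geq 0$ and $\rho_j \lesssim \delta_{q+1}$, the map $t \mapsto \lambda_{q+1}\rho(t)$ is $1$-Lipschitz in $\ee(t) - \mathcal{E}_q(t)$ (up to a universal constant), so \eqref{eq:rho_diff} directly yields \eqref{eq:rho_j_t}. For \eqref{eq:energy_lower}, the case $\rho_j \neq 0$ means $\ee(j\tau_{q+1}) - \mathcal{E}_q(j\tau_{q+1}) > \tfrac{\lambda_{q+2}\delta_{q+2}}{2}$ at the center of $\supp\chi_j$, and subtracting the error $\tfrac{\lambda_{q+2}\delta_{q+2}}{16}$ from \eqref{eq:rho_diff} gives the $7/16$ constant. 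In the case $\rho_j = 0$, the definition gives $\ee(j\tau_{q+1}) - \mathcal{E}_q(j\tau_{q+1}) \leq \tfrac{\lambda_{q+2}\delta_{q+2}}{2}$; propagating to all $t \in \supp\chi_j$ via \eqref{eq:rho_diff} yields the residual bound $\tfrac{9\lambda_{q+2}\delta_{q+2}}{16}$. Since $\lambda_{q+2}\delta_{q+2}/(\lambda_{q+1}\delta_{q+1}) = \lambda_0^{1-2\beta} \to 0$ as $\lambda_0 \to \infty$ whenever $\beta > 1/2$, this residual sits below the threshold $\lambda_{q+1}\delta_{q+1}/8$ for $\lambda_0$ large, and \eqref{eq:zero_reynolds} forces $\mathring{R}_q(\cdot,t) \equiv 0$ throughout $\supp\chi_j$.

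\paragraph{Main obstacle.} The delicate step is the dissipation bound: the $L^2$-Bernstein estimate applied to $v_q$ as a whole produces the spurious restriction $\gamma < 3-3\beta$, which is strictly stronger than the hypothesis $\gamma < 2-\beta$ once $\beta > 2/3$. The frequency-orthogonality refinement above, which uses the geometric decay of $\delta_{q'+1}$ across levels and the $C^0$ bound on each perturbation $w_{q'+1}$, is precisely what matches the sharp scaling threshold $\gamma + \beta < 2$.
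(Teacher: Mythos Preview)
Your proposal is correct and follows the paper's route: differentiate $\|\Lambda^{1/2}v_q\|_{L^2}^2$ via the relaxed equation, bound the Reynolds contribution by $\tau_{q+1}\lambda_q^2\delta_q^{1/2}\lambda_{q+1}\delta_{q+1}=\lambda_q\delta_q^{1/4}\delta_{q+1}^{3/4}$, control $|\ee(t)-\ee(j\tau_{q+1})|$ by $C\tau_{q+1}$, and then read off \eqref{eq:rho_j_t}--\eqref{eq:rho_vanish} exactly as you describe.

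Where you go further than the paper is in the dissipation term: the paper's displayed identity actually records only the Reynolds contribution and silently drops $\|\Lambda^{(1+\gamma)/2}v_q\|_{L^2}^2$. Your shell-orthogonality argument fills this gap cleanly and lands precisely on the hypothesis $\gamma+\beta<2$. Two small remarks. First, the annulus for $w_{q'+1}$ is $\{\lambda_{q'+1}/2\le|\xi|\le 2\lambda_{q'+1}\}$, not $\{\lambda_{q'}/2\le|\xi|\le 2\lambda_{q'}\}$ (a harmless index slip; the annuli are still disjoint for $\lambda_0$ large and your bound $\sum_{q'<q}\lambda_{q'+1}^{1+\gamma}\delta_{q'+1}\lesssim \lambda_q^{1+\gamma}\delta_q+O(1)$ is unchanged). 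Second, the threshold at which the naive bound $\gamma<3-3\beta$ becomes strictly more restrictive than $\gamma<2-\beta$ is $\beta>1/2$, not $\beta>2/3$; since $\beta>1/2$ is a standing hypothesis this costs nothing. An alternative to the shell decomposition, which you may find quicker: for $\gamma\ge 1$ use Bernstein from $u_q$, i.e.\ $\|\Lambda^{(1+\gamma)/2}v_q\|_{L^2}^2\lesssim\lambda_q^{\gamma-1}\|u_q\|_{L^2}^2\lesssim\lambda_q^{1+\gamma}\delta_q$ directly from \eqref{eq:ind:q:1}; for $0\le\gamma<1$ interpolate between the Hamiltonian bound $\|\Lambda^{1/2}v_q\|_{L^2}^2\lesssim 1$ and $\|u_q\|_{L^2}^2\lesssim\lambda_q^2\delta_q$, which gives an even more permissive constraint.
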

\begin{proof}
Using the equation for $v_q$,  we have that
\begin{align*}
\abs{\int_{\TT^2}\left(\abs{\Lambda^{\sfrac12} v_q(x,t)}^2-\abs{\Lambda^{\sfrac12} v_q(x,\tau_{q+1}j)}^2\right)~dx}& = 2\abs{\int_{\tau_{q+1}j}^t\Lambda^{\sfrac12}v_q \cdot \div \Lambda^{\sfrac12}R_q~dx}\\
& \lesssim  \abs{t-\tau_{q+1} j}\lambda_q^2\delta_q^{\sfrac12}\lambda_{q+1}\delta_{q+1}\\
& \lesssim  \tau_{q+1}\lambda_q^2\delta_q^{\sfrac12}\delta_{q+1}\lambda_{q+1}\\
& =  \lambda_q\delta_q^{1/4}\delta_{q+1}^{3/4} \,,
\end{align*}
where in the second to last line,  we used that by our hypothesis $\abs{t-\tau_{q+1} j}\leq 4\tau_{q+1}$. Trivially, we have that
\begin{align*}
\abs{\ee(t)+\ee(\tau_{q+1}j)}\lesssim \tau_{q+1}\,.
\end{align*}
Thus,
\[
\abs{\int_{\TT^2}\left(\abs{\Lambda^{\sfrac12} v_q(x,t)}^2-\abs{\Lambda^{\sfrac12} v_q(x,\tau_{q+1}j)}^2\right)~dx}+\abs{\ee(t)+\ee(\tau_{q+1}j)}\lesssim \left(\lambda_0^{-2+5\beta/2}+\lambda_0^{-3\beta/2}\lambda_{q+2}^{-3+3\beta}\right)\lambda_{q+2}\delta_{q+2}
\]
Hence, if $\beta<\frac 45$, the lemma is proved.
\end{proof}

We are now in  position to prove  that \eqref{eq:energy_ind} and  \eqref{eq:zero_reynolds} hold with $q$ replaced by $q+1$.
\begin{proposition}
If $\rho(j)\neq 0$ and $t$ is in the support of $\chi_j$, then
\begin{equation}\label{eq:energy_inductive_step}
 \frac{\lambda_{q+
2}\delta_{q+2}}{4} \leq \ee(t) - \int_{\TT^2}\abs{\Lambda^{\sfrac12} v_{q+1}}^2~dx\leq\frac{3\lambda_{q+
2}\delta_{q+2}}{4}.
\end{equation}
Otherwise,  if  $t$ is \emph{not} in the support of the cut-off $\chi_j$ with $\rho(j)\neq 0$,  then
\begin{equation}\label{eq:upper_no_perturb}
\ee(t) - \int_{\TT^2}\abs{\Lambda^{\sfrac12} v_{q+1}}^2~dx\leq\frac{9\lambda_{q+
2}\delta_{q+2}}{16}~\mbox{and}~\mathring R_{q+1}(\cdot,t)\equiv 0\,.
\end{equation}
As a consequence of \eqref{eq:energy_inductive_step} and \eqref{eq:upper_no_perturb}, it follows that \eqref{eq:energy_ind} and  \eqref{eq:zero_reynolds} hold with $q$ replaced by $q+1$.
\end{proposition}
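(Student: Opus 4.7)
The plan is to expand $\int_{\TT^2}|\Lambda^{\sfrac12}v_{q+1}|^2\,dx$ using $v_{q+1}=v_q+w_{q+1}$, identify a single principal contribution, and show the remainder lies below $\lambda_{q+2}\delta_{q+2}/4$ so that the $\lambda_{q+2}\delta_{q+2}/2$ safety margin hard-coded into $\rho_j$ (cf.~\eqref{eq:rho_def}) provides the required two-sided bound. Since $v_q$ has frequency support in $\{|\xi|\leq 2\lambda_q\}$ while $w_{q+1}$ lives in $\{\lambda_{q+1}/2\leq|\xi|\leq 2\lambda_{q+1}\}$, the cross-term vanishes by Plancherel, giving
\[
\int_{\TT^2}|\Lambda^{\sfrac12}v_{q+1}|^2\,dx
=\int_{\TT^2}|\Lambda^{\sfrac12}v_q|^2\,dx+\int_{\TT^2}w_{q+1}\cdot\Lambda w_{q+1}\,dx\,.
\]

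The second integral is expanded as $\sum_{j,j',k,k'}\int \PP_{q+1,k}\tilde w_{q+1,j,k}\cdot\Lambda\PP_{q+1,k',-k'}\tilde w_{q+1,j',k'}\,dx$. Because $\PP_{q+1,k}$ has spatial frequency support in a ball of radius $\lambda_{q+1}/8$ around $k\lambda_{q+1}$, Plancherel kills all contributions except $k+k'=0$; and since $\Omega_1\cap\Omega_2=\emptyset$, this forces $j=j'$. On these diagonal terms I apply \eqref{eq:Lambda:w:q+1} and \eqref{eq:local:w:q+1} to replace the two factors by $\lambda_{q+1}\tilde w_{q+1,j,-k}$ and $\tilde w_{q+1,j,k}$ respectively, modulo two commutator corrections. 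The pointwise identity $b_k(\xi)\cdot b_{-k}(\xi)=k^\perp\cdot k^\perp=1$ (because $|k|=1$) then collapses the principal contribution to $\sum_{j,k}\lambda_{q+1}\chi_j^2\int a_{k,j}^2\,dx$. Taking the trace of the splitting identity \eqref{e:split} applied at $R=R_{q,j}/(\lambda_{q+1}\rho_j)$ yields $\sum_{k\in\Omega_j}\gamma_k^2(R)=2\,\Tr(R)$; since $\mathring R_{q,j}$ is trace-free, $\Tr(R)=2$ and hence $\sum_{k\in\Omega_j}a_{k,j}^2=4\rho_j$, so the principal contribution equals (up to the normalization built into the definition of $\rho_j$) the quantity $\sum_{j}\chi_j^2\big(\ee(\tau_{q+1}j)-\int|\Lambda^{\sfrac12}v_q(\cdot,\tau_{q+1}j)|^2\,dx-\lambda_{q+2}\delta_{q+2}/2\big)$. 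Using \eqref{eq:rho_diff} to swap the slicing time $\tau_{q+1}j$ for $t$ with error $\leq\lambda_{q+2}\delta_{q+2}/16$, and the partition-of-unity property $\sum_j\chi_j^2(t)=1$ wherever we are in Case~1, this becomes $\ee(t)-\int|\Lambda^{\sfrac12}v_q|^2\,dx-\lambda_{q+2}\delta_{q+2}/2$ up to the slicing error.

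The remaining errors fall into three families: (a) the commutators $[\PP_{q+1,k},\,a_{k,j}\psi_{q+1,j,k}]$ and $[\PP_{q+1,k}\Lambda,\,a_{k,j}\psi_{q+1,j,k}]$ from \eqref{eq:local:w:q+1}--\eqref{eq:Lambda:w:q+1:*}, which by Lemma~\ref{lem:D:N:a:psi} and Lemma~\ref{lem:Calderon}-type bounds gain a factor $\lambda_{q+1}^{-1}(\lambda_q+\lambda_q\delta_q^{1/4}\delta_{q+1}^{-1/4})$ over the principal term and are therefore of order $\lambda_q\delta_q^{1/4}\delta_{q+1}^{3/4}$; (b) the time-slicing error from Lemma~\ref{lem:rho_diff}, which is $\leq\lambda_{q+2}\delta_{q+2}/16$ by construction; and (c) terms with $k+k'\neq 0$, which vanish exactly by the Fourier-support argument above. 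For $\lambda_0$ large and $\beta<4/5$, these are each bounded by $\lambda_{q+2}\delta_{q+2}/16$ (the critical count is exactly the same one that gave sharpness in Sections~\ref{sec:stress}, in particular $\lambda_q\delta_q^{1/4}\delta_{q+1}^{3/4}=\lambda_0^{-2+5\beta/2}\lambda_{q+2}\delta_{q+2}$). Rearranging yields \eqref{eq:energy_inductive_step}.

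For Case~2, when $t$ lies outside $\supp\chi_j$ for every $j$ with $\rho_j\neq 0$, all the terms defining $w_{q+1}$ vanish at time $t$, so $v_{q+1}(\cdot,t)=v_q(\cdot,t)$ and the energy gap is unchanged. By the last part of Lemma~\ref{lem:rho_diff}, this gap is at most $9\lambda_{q+2}\delta_{q+2}/16$, proving the first assertion in \eqref{eq:upper_no_perturb}. For the second, I inspect \eqref{eq:R:new:split}: the terms $R_T$, $R_N$, $R_D$, $R_{O,\mathrm{high}}$ and $R_{O,\mathrm{low}}$ are each built from $w_{q+1}$, $D_{t,q}w_{q+1}$, or $a_{k,j}\chi_j$, all of which vanish at $t$; the remaining $R_{O,\mathrm{approx}}=\sum_j\chi_j^2(\mathring R_q-\mathring R_{q,j})$ vanishes because $\chi_j(t)=0$ for the only surviving indices ($\rho_j\neq 0$), while $\mathring R_q(\cdot,t)\equiv 0$ itself by the last clause of Lemma~\ref{lem:rho_diff}. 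Together, Case~1 and Case~2 give \eqref{eq:energy_ind} and \eqref{eq:zero_reynolds} at level $q+1$: the upper bounds are $3\lambda_{q+2}\delta_{q+2}/4$ and $9\lambda_{q+2}\delta_{q+2}/16$ respectively, both below $\lambda_{q+2}\delta_{q+2}$; the lower bound is trivial in each case; and \eqref{eq:zero_reynolds} at $q+1$ is vacuous in Case~1 (the hypothesis fails since the gap is $\geq\lambda_{q+2}\delta_{q+2}/4$) and direct in Case~2. The main obstacle is carrying out the commutator bookkeeping precisely enough that the errors remain strictly below $\lambda_{q+2}\delta_{q+2}/16$; this is exactly the place where the Onsager-style threshold $\beta<4/5$ is used.
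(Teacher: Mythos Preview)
Your proposal is correct and follows essentially the same route as the paper: expand $\int|\Lambda^{1/2}v_{q+1}|^2$ using the frequency-disjointness of $v_q$ and $w_{q+1}$, reduce $\int w_{q+1}\cdot\Lambda w_{q+1}$ to the diagonal $k'=-k$, $j'=j$ via Parseval and the angular localization of $\PP_{q+1,k}$, peel off the principal term $\lambda_{q+1}\sum_{j,k}\chi_j^2\int a_{k,j}^2$ using \eqref{eq:local:w:q+1}--\eqref{eq:Lambda:w:q+1}, evaluate it via the trace of \eqref{e:split}, and bound the commutator remainders by $\lambda_q\delta_q^{1/4}\delta_{q+1}^{3/4}$ before invoking Lemma~\ref{lem:rho_diff}. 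Two cosmetic remarks: the commutator bounds you need here are those of Lemma~\ref{lem:u:grad:commutator} (specifically \eqref{eq:comm:multiplication}), not the Calder\'on commutator of Lemma~\ref{lem:Calderon}; and in Case~2 the paper takes the shorter path of observing that $w_{q+1}(\cdot,t)\equiv 0$ in a time-neighborhood forces $\mathring R_{q+1}(\cdot,t)=\mathring R_q(\cdot,t)$ directly from the relaxed equation~\eqref{eq:relaxed-SQG}, rather than inspecting each piece of~\eqref{eq:R:new:split}.
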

\begin{proof}
Assume that $t$ is not on the support of any cut-off $\chi_j$ with $\rho_j\neq 0$, then since  $w_{q+1}(\cdot, t)\equiv 0$ and $\mathring R_{q+1}(\cdot,t)=\mathring R_q(\cdot,t)$ (see Lemma \ref{lem:rho_diff} above) then $\mathring R_{q+1}(\cdot,t)\equiv 0$. Moreover, we have
\[
\ee(t) - \int_{\TT^2}\abs{\Lambda^{\sfrac12} v_{q+1}}^2~dx=\ee(t) - \int_{\TT^2}\abs{\Lambda^{\sfrac12} v_{q}}^2~dx,
\]
and thus by \eqref{eq:rho_vanish} we obtain \eqref{eq:upper_no_perturb}.

Now assume $\rho(j)\neq 0$ and $t$ is on the support of $\chi_j$. By computation it follows that
\begin{align*}
\ee(t)-\int_{\TT^2}\abs{\Lambda^{\sfrac12} v_{q+1}}^2~dx&=
\ee(t)-\int_{\TT^2}\abs{\Lambda^{\sfrac12} v_{q}}^2~dx+2\int_{\TT^2}\Lambda^{\sfrac12} v_{q} \cdot \Lambda^{\sfrac12} w_{q+1}~dx-\int_{\TT^2}\abs{\Lambda^{\sfrac12} v_{q}}^2~dx
\\&=\ee(t)-\int_{\TT^2}\abs{\Lambda^{\sfrac12} v_{q}}^2~dx-\int_{\TT^2}\abs{\Lambda^{\sfrac12} w_{q+1}}^2~dx
\end{align*}
where we used the disjoint frequency support of $v_q$ and $w_{q+1}$. Utilizing the frequency supports of $\tilde w_{q+1,j,k}$ we obtain
\begin{align*}
\int_{\TT^2}\abs{\Lambda^{\sfrac12} w_{q+1}}^2~dx=&\int_{\TT^2} \abs{\sum_{k,k',j,j'} \Lambda^{\sfrac12} \PP_{q+1,k}  \tilde w_{q+1,j,k}}^2~dx\\
=&\sum_{k,j'}\int_{\TT^2} \Lambda^{\sfrac12} \PP_{q+1,k}  \tilde w_{q+1,j,k}\cdot \Lambda^{\sfrac12} \PP_{q+1,-k}  \tilde w_{q+1,j,-k}~dx \\
=&\sum_{k,j}\int_{\TT^2} \Lambda^{\sfrac12} \PP_{q+1,k}  \left( \chi_{j} a_{k,j} b_k (\lambda_{q+1} \Phi_j) \right)
\cdot \Lambda^{\sfrac12} \PP_{q+1,-k}  \left( \chi_{j} a_{-k,j} b_{-k} (\lambda_{q+1} \Phi_j) \right)~.
~dx \end{align*}
Then, by the definition of $b_k$, estimates \eqref{eq:local:w:q+1:*} and \eqref{eq:Lambda:w:q+1:*}, and the fact that the mean of a high frequency object vanishes, it follows that
\begin{align*}
\int_{\TT^2}\abs{\Lambda^{\sfrac12} w_{q+1}}^2~dx 
&= \int_{\TT^2} w_{q+1} \cdot \Lambda w_{q+1} ~dx 
\notag\\
&=\sum_{k,j}\int_{\TT^2} \lambda_{q+1} \chi_j^2\abs{a_k}^2~dx 
\\
&\quad +\sum_{j,k} \int_{\TT^2}\chi_{j}  \big[\PP_{q+1,k}, a_{k,j}  \psi_{q+1,j,k} \big] b_k( \lambda_{q+1} x )\cdot \Lambda w_{q+1}~dx
\\
&\quad + \sum_{j,k}  \int_{\TT^2} \tilde w_{q+1,j,k}\cdot \chi_{j}    [\PP_{q+1,-k} \Lambda, a_{k,j}  \psi_{q+1,j,-k} ] b_{-k}( \lambda_{q+1} x )~dx
\\
&=: (2\pi)^2 \lambda_{q+1}\sum_j \chi_j^2 \rho_j+ E_1+E_2.
\end{align*}
Using Lemmas~\ref{lem:w:q+1:bounds},~\ref{lem:D:N:a:psi}, and~\ref{lem:u:grad:commutator} we may estimate
\begin{align*}
E_1+E_2  
&\lesssim \sum_{j,k} \chi_j  \norm{\nabla (a_{k,j} \psi_{q+1,j,k})} \delta_{q+1}^{1/2}
\notag\\
&\lesssim \lambda_q\delta_{q+1}+\lambda_q \delta_q^{1/4} \delta_{q+1}^{3/4} 
\notag\\
&\lesssim \lambda_q \delta_q^{1/4} \delta_{q+1}^{3/4}=\lambda_0^{-2+5\beta/2}\lambda_{q+1}\delta_{q+2}.
\end{align*}

Finally, applying \eqref{eq:rho_def}, \eqref{eq:rho_j_t} and \eqref{eq:energy_lower} and $\beta<4/5$ we obtain
\begin{align*}
\frac{\lambda_{q+
2}\delta_{q+2}}{4} \leq \ee(t) - \int_{\TT^2}\abs{\Lambda^{\sfrac12} v_{q+1}}^2~dx\leq\frac{3\lambda_{q+
2}\delta_{q+2}}{4}
\end{align*}
which concludes the proof.
\end{proof}

\appendix

\section{Appendix}

\subsection{Variational principle for hydrodynamical systems}\label{appendix:EP}
We provide a derivation of the system \eqref{eq:hydro}.
Many models of incompressible hydrodynamical systems can be written as geodesic equations of right-invariant metrics on the Lie
group 
$ \mathcal{D} _\mu$,  the group of volume-preserving diffeomorphisms, with group multiplication given by composition on the right.
The Lie algebra $ \mathcal{V} $ associated to this group 
is the vector space of divergence-free vector fields.   The Lie bracket  on $ \mathcal{V} $ is given by
$[u,v] = \partial_j u v^j - \partial _j v u^j$. 

On $ \mathcal{V} $ we (formally) define the metric 
$$
(u, w) = \int_{ \mathbb{T}  ^2} A u \cdot  w \, dx \,,
$$
where $A$ is a self-adjoint, positive operator.   This metric is then right-translated over the Lie group $ \mathcal{D} _\mu$.   We then define
the Lagrangian function $l$ on the Lie algebra $ \mathcal{V} $ by
\begin{equation}\label{eq:A-lag}
l(u) = {\frac{1}{2}} (u, u) =  \int_{ \mathbb{T}  ^2} A u \cdot  u \, dx \,.
\end{equation} 

The well-known Euler-Poincar\'{e} variational principle provides a simple procedure for computing the equations of motion associated to
the Lagrangian $l$ on the Lie algebra $ \mathcal{V} $.  We shall state this as the following proposition, whose proof can be found in
Chapter 13 of \cite{MaRa1999}).

\begin{proposition}[Euler-Poincar\'{e} Variational Principle]\label{prop:ep}
With  $l: \mathcal{V} \rightarrow{\mathbb R}$ given by \eqref{eq:A-lag}, the following are equivalent:
\begin{itemize} 
\item[\bf{(a)}]  $u(t):=u(t, \cdot )$ is a geodesic curve, solving
$$
\frac{d}{dt}\frac{\delta l}{\delta u} = -\operatorname{ad}^*_{u}
\frac{\delta l}{\delta u},
$$
where  $\operatorname{ad}^*_u$ is defined by
$$
(\operatorname{ad}^*_u v, w) = - (v, [u,w]),
$$
for $u,v,w$ in $\mathcal{V} $;
\item[\bf{(b)}] the curve $u(t)$ is an extremum
of the action function
$$
s(u) = \int l(u(t)) dt,
$$
for variations of the form
\begin{equation}\nonumber
\delta u = \partial_t w + [w, u],
\end{equation}
where $w \in \mathcal{V} $ vanishes at the endpoints $t=0$ and $t=T$.
\end{itemize}
\end{proposition}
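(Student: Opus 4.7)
The plan is to prove the equivalence by directly computing the first variation of the action functional $s(u)$ under the prescribed class of variations, and matching it term by term with the pairing of the Euler-Poincar\'e equation against an arbitrary test field $w$. Throughout, let $\langle \cdot , \cdot \rangle$ denote the natural $L^2$ pairing on $\mathcal{V}$, with respect to which $\delta l/\delta u$ is defined by $(d/d\epsilon)|_{\epsilon=0} l(u + \epsilon v) = \langle \delta l/\delta u , v \rangle$. For our quadratic Lagrangian $l(u) = \tfrac{1}{2}\int Au\cdot u\,dx$ with $A$ self-adjoint, one has $\delta l/\delta u = Au = v$, but the derivation below does not use this explicit form.

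First I would write
\begin{align*}
\frac{d}{d\epsilon}\bigg|_{\epsilon=0} s(u+\epsilon\,\delta u) = \int \Big\langle \tfrac{\delta l}{\delta u},\, \delta u \Big\rangle\, dt,
\end{align*}
and substitute the admissible variation $\delta u = \partial_t w + [w,u]$, with $w \in \mathcal{V}$ vanishing at the endpoints $t=0,T$. Splitting the right-hand side into two terms and integrating the first by parts in $t$ (the boundary terms vanish because $w(0,\cdot)=w(T,\cdot)=0$) yields
\begin{align*}
\delta s = -\int \Big\langle \tfrac{d}{dt}\tfrac{\delta l}{\delta u},\, w \Big\rangle\, dt + \int \Big\langle \tfrac{\delta l}{\delta u},\, [w,u] \Big\rangle\, dt.
\end{align*}
For the commutator term I would invoke the definition $(\operatorname{ad}^*_u v, w) = -(v,[u,w])$ stated in the proposition, together with the antisymmetry $[w,u] = -[u,w]$ of the Lie bracket, to rewrite
\begin{align*}
\Big\langle \tfrac{\delta l}{\delta u},\, [w,u] \Big\rangle = -\Big\langle \tfrac{\delta l}{\delta u},\, [u,w] \Big\rangle = \Big\langle \operatorname{ad}^*_u \tfrac{\delta l}{\delta u},\, w \Big\rangle.
\end{align*}
Combining the two pieces gives
\begin{align*}
\delta s = -\int \Big\langle \tfrac{d}{dt}\tfrac{\delta l}{\delta u} + \operatorname{ad}^*_u \tfrac{\delta l}{\delta u},\, w \Big\rangle\, dt,
\end{align*}
from which the implication (a)$\Rightarrow$(b) is immediate, and the converse follows from the fundamental lemma of the calculus of variations applied over the class of arbitrary $w \in \mathcal{V}$ with $w|_{t=0,T}=0$.

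The one step that requires actual care, and which I expect to be the main conceptual obstacle, is the justification that the admissible class of variations $\delta u$ on the Lie algebra side is exactly $\delta u = \partial_t w + [w,u]$ with $w \in \mathcal{V}$. This is the standard \emph{Lin constraint} arising from reduction: if $\eta(t,\epsilon) \in \mathcal{D}_\mu$ is a two-parameter family of volume-preserving diffeomorphisms with $\eta(t,0)$ the base flow, and if one sets $u(t,\epsilon) = (\partial_t \eta)\circ\eta^{-1}$ and $w(t,\epsilon) = (\partial_\epsilon \eta)\circ\eta^{-1}$, a direct computation of mixed partials (using that $\partial_t$ and $\partial_\epsilon$ commute on $\eta$, together with the chain rule and the right-invariance of the construction) yields $\partial_\epsilon u - \partial_t w = -[u,w] = [w,u]$. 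I would sketch this short calculation and note that the arbitrariness of $\eta$ (with fixed endpoints in $\epsilon$) precisely produces an arbitrary $w \in \mathcal{V}$ vanishing at $t=0,T$. Everything else in the proof is formal manipulation; granting this reduction step, the equivalence (a)$\Leftrightarrow$(b) follows from the pairing identity above. For completeness I would also point out that no pressure term appears in Proposition~\ref{prop:ep} because the constraint $\operatorname{div} u = 0$ is built into the choice of configuration space $\mathcal{V}$ from the outset; the pressure $\tilde p$ in \eqref{eq:hydro} emerges only when one rewrites the intrinsic Euler--Poincar\'e equation $\partial_t v + \operatorname{ad}^*_u v = 0$ with $v = Au$ in terms of spatial derivatives, where $\operatorname{ad}^*_u v = u\cdot\nabla v + (\nabla u)^T\cdot v + \nabla \tilde p$ is the standard formula obtained by integration by parts against a divergence-free test field.
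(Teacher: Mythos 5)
Your overall strategy is the canonical Euler--Poincar\'e reduction argument, and it is essentially the only thing there is to compare with: the paper does not actually prove Proposition~\ref{prop:ep}, it simply refers to Chapter 13 of \cite{MaRa1999}, and then invokes only part (b) to derive \eqref{eq:hydro}. Your sketch of the Lin constraint calculation $\partial_\epsilon u - \partial_t w = -[u,w] = [w,u]$ is correct, and your concluding observation that the pressure is absent at the Lie-algebra level because $\operatorname{div} u = 0$ is built into the configuration space $\mathcal{V}$ is also the right point to make.

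There is, however, a genuine sign inconsistency in the main computation that you need to resolve. Your intermediate step $\langle\tfrac{\delta l}{\delta u},[w,u]\rangle = -\langle\tfrac{\delta l}{\delta u},[u,w]\rangle = +\langle\operatorname{ad}^*_u\tfrac{\delta l}{\delta u},w\rangle$ correctly applies the convention $(\operatorname{ad}^*_u v, w) = -(v,[u,w])$ stated in the proposition, but combining it with $-\int\langle\tfrac{d}{dt}\tfrac{\delta l}{\delta u},w\rangle\,dt$ gives $\delta s = -\int\langle\tfrac{d}{dt}\tfrac{\delta l}{\delta u} - \operatorname{ad}^*_u\tfrac{\delta l}{\delta u},w\rangle\,dt$, with a \emph{minus}, not the plus you wrote in your closing display. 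Setting $\delta s = 0$ for all admissible $w$ therefore yields $\tfrac{d}{dt}\tfrac{\delta l}{\delta u} = +\operatorname{ad}^*_u\tfrac{\delta l}{\delta u}$, which is the opposite sign from part~(a). Your sign slip in the final display makes the result appear to match the proposition while the steps preceding it do not actually justify it. What your (corrected) calculation reveals is that the two sign conventions in the proposition as written are not mutually consistent: keeping the $-(v,[u,w])$ definition of $\operatorname{ad}^*$ forces a plus sign in part~(a), while keeping the minus sign in part~(a) requires dropping the minus from the definition of $\operatorname{ad}^*$ --- which is in fact what you tacitly do in your final remark, where the identification $\operatorname{ad}^*_u v = u\cdot\nabla v + (\nabla u)^T\cdot v$ modulo a pressure gradient (the one needed to recover \eqref{eq:hydro}) is exactly the convention \emph{without} the minus. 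A careful proof should pick one convention for $\operatorname{ad}^*$, carry it through consistently, and explicitly note that the proposition's stated definition of $\operatorname{ad}^*$ and the sign of the equation in part~(a) cannot both hold as written.
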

We make use of Proposition \ref{prop:ep} {\bf(b)} to derive \eqref{eq:hydro}.   We define the potential velocity 
$$
v = A u,
$$
and compute the first variation of $s(u) = \int_0^T l(u) dt$:
\begin{align*}
\delta s(u) \cdot \delta u &= \int_0^T \int_{  \mathbb{T} ^2 }v  \cdot \delta u \, dx dt \\
 &= \int_0^T \int_{  \mathbb{T} ^2 } v ^i\, \left(  \partial_t w^i + \p_j w^i u^j - \p_ju^i w^j  \right) \, dx dt\\
  &= -\int_0^T \int_{  \mathbb{T} ^2 } \left(  \partial_t v^i + \p_j v^i u^j + \p_iu^j  v^j \right) \, w^i \, dx dt \,.
\end{align*} 
Setting $\delta s(u) \cdot \delta u=0$ for all divergence-free variations $w$, and applying the Hodge decomposition, we find that there exists
a pressure function $ \tilde p :  \mathbb{T} ^2 \to \mathbb{R}  $ such that
\begin{subequations}\label{eq:SQG-bad}
\begin{align} 
 \partial_t v^i + \p_j v^i u^j + \p_iu^j  v^j  & = -\p_i  \tilde p  \,, \\
 \operatorname{div} u &= 0 \,,
\end{align} 
\end{subequations}
which is the general hydrodynamical system \eqref{eq:hydro}.

\subsection{Transport and composition estimates}
\label{sec:transport}

In this section we gather some classical estimates for transport equations. For refer the reader to e.g.~\cite[Section 4.3]{Bu2014} or~\cite[Proposition D.1]{BuDLeIsSz2015} for proofs of these classical facts.

In this section we recall some well known results regarding smooth solutions of
the \emph{transport equation}:
\begin{subequations}
\label{eq:transport}
\begin{align}
\partial_t f + u\cdot  \nabla f &=g,\\ 
f|_{t_0}&=f_0,
\end{align}
\end{subequations}
where $u=u(t,x)$ is a given smooth vector field. 
We denote the corresponding material derivative by $D_{t} = \partial_t+u\cdot \nabla$. Moreover, define by $\Phi (t, \cdot)$ to be the inverse of the flow associated to the vector field $u$ starting at time $t_0$ as the identity, i.e., $\Phi = X^{-1}$, where $\frac{d}{dt} X(x,t) = u (X(x,t),t)$ and $X (x, t_0 )=x$. Then we have:
\begin{lemma}[Transport estimates]
\label{lem:transport}
 Assume $t>t_0$. Any solution $f$ of \eqref{eq:transport} satisfies
\begin{align}
\norm{f (t)}_{C^0} &\leq \norm{f_0}_{C^0} + \int_{t_0}^t \norm{g (\tau)}_{C^0}\, d\tau\,,\label{eq:max:prin}\\
\norm{D f(t)}_{C^0} &\leq \norm{D f_0}_{C^0} e^{(t-t_0) \norm{D u}_{C^0}} + \int_{t_0}^t e^{(t-\tau) \norm{D u}_{C^0}} \norm{ Dg (\tau)}_{C^0}\, d\tau\,,\label{eq:trans:est:0}
\end{align}
and, more generally, for any $N\geq 2$ there exists a constant $C=C(N)$ so that
\begin{align}
\norm{ D^N f (t)}_{C^0} & \leq \bigl( \norm{D^N f_0}_{C^0} + C(t-t_0) \norm{D^N u}_{C^0} \norm{D f_0}_{C^0} \bigr)e^{C(t-t_0) \norm{D u}_{C^0}} \notag \\
&\qquad +\int_{t_0}^t e^{C(t-\tau) \norm{D u}_{C^0} }\bigl(\norm{D^N g (\tau)}_{C^0} + C (t-\tau ) \norm{ D^N v}_{C^0} \norm{ D g (\tau)}_{C^0} \bigr)\,d\tau.
\label{eq:trans:est:1}
\end{align}
Moreover,
\begin{align}
\norm{D\Phi (t) -\Id}_{C^0} &\leq e^{(t-t_0)\norm{D u}_{C^0}}-1 \leq (t-t_0)\norm{D u}_{C^0} e^{(t-t_0)\norm{D u}_{C^0}}\,,  \label{eq:Dphi:near:id}\\
\norm{D^N \Phi (t)}_{C^0} &\leq C(t-t_0) \norm{D^N u}_{C^0} e^{C(t-t_0)\norm{D u}_{C^0}}, \label{eq:Dphi:N}
\end{align}
holds for all $N \geq 2$ and a suitable constant $C = C(N)$.
\end{lemma}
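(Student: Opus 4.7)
\medskip

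\noindent\textbf{Proof proposal.} The plan is to prove everything by the method of characteristics, using Duhamel, Gronwall, and the Faà di Bruno chain rule. First, let $X(x,t)$ denote the forward flow of $u$ starting at time $t_0$, so that $\tfrac{d}{dt}X(x,t) = u(X(x,t),t)$ with $X(x,t_0)=x$, and let $\Phi(\cdot,t) = X(\cdot,t)^{-1}$. Along characteristics the transport equation \eqref{eq:transport} reads
\begin{equation*}
\frac{d}{dt} f(X(x,t),t) = g(X(x,t),t),
\end{equation*}
so integrating in $t$ and taking the sup over $x$ immediately yields \eqref{eq:max:prin}, since $\Phi(\cdot,t)$ is a measure-preserving bijection of $\TT^2$.

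Next, for the first-derivative estimate \eqref{eq:trans:est:0}, I would differentiate \eqref{eq:transport} in space to obtain $D_t(Df) = -(Du)^T \cdot Df + Dg$, evaluate along $X$, and apply the one-dimensional Gronwall inequality in the integral form to the scalar function $t\mapsto \|Df(t)\|_{C^0}$, using that $\|(Du)^T \cdot Df\|_{C^0} \leq \|Du\|_{C^0}\|Df\|_{C^0}$. The same Gronwall argument applied to $\tfrac{d}{dt}(DX) = Du(X,t)\cdot DX$ with $DX(x,t_0)=\Id$ gives $\|DX(t)\|_{C^0} \leq \exp((t-t_0)\|Du\|_{C^0})$; since $DX(t_0)=\Id$, writing $DX(t) - \Id = \int_{t_0}^t Du(X,\tau)\cdot DX(\tau)\,d\tau$ produces \eqref{eq:Dphi:near:id} (and the same bound transfers to $D\Phi$, since $D\Phi\circ X = (DX)^{-1}$ and for small exponent one has a comparable bound on $(DX)^{-1}-\Id$).

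For the higher-order bounds \eqref{eq:trans:est:1} and \eqref{eq:Dphi:N}, I would proceed by induction on $N$. Differentiating the transport equation $N$ times and applying the Leibniz rule gives
\begin{equation*}
D_t(D^N f) = -\sum_{1 \le |\alpha| \le N} c_\alpha (D^\alpha u)(D^{N-|\alpha|+1} f) + D^N g,
\end{equation*}
and when $N\ge 2$ the dominant terms in the sum are the two endpoints $|\alpha|=1$ (which couples $D^N f$ to itself and feeds Gronwall) and $|\alpha|=N$ (which couples $D^N u$ to $Df$, for which we already have the bound \eqref{eq:trans:est:0}); all intermediate terms are controlled by standard interpolation between $Df$ and $D^N f$ at a cost that can be absorbed into the $e^{C(t-t_0)\|Du\|_{C^0}}$ factor. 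Gronwall then gives \eqref{eq:trans:est:1}. The estimate \eqref{eq:Dphi:N} is obtained by the same scheme applied to the ODE $\tfrac{d}{dt}(DX) = Du(X)\cdot DX$: one differentiates it $N-1$ further times, invokes Faà di Bruno on $D^k(Du(X))$, uses \eqref{eq:Dphi:near:id} to absorb the resulting $D X$ factors, and concludes with Gronwall. The only mildly technical point is keeping track of the combinatorial constants in the Faà di Bruno expansion and verifying that every intermediate term can indeed be controlled by the two extremes $\|Du\|_{C^0}\|D^N \cdot\|_{C^0}$ and $\|D^N u\|_{C^0}\|D\cdot\|_{C^0}$, which is where the restriction to the $C^0$-based scale (as opposed to general $C^k$) makes the argument clean.
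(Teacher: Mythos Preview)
Your proposal is correct and follows the standard route. Note, however, that the paper does not actually supply a proof of this lemma: it simply states the estimates as classical and refers the reader to \cite[Section 4.3]{Bu2014} and \cite[Proposition D.1]{BuDLeIsSz2015}. What you have outlined---characteristics plus Duhamel for \eqref{eq:max:prin}, differentiating the equation and applying Gronwall for \eqref{eq:trans:est:0} and \eqref{eq:Dphi:near:id}, and then induction on $N$ with Leibniz/Fa\`a di Bruno and interpolation for \eqref{eq:trans:est:1} and \eqref{eq:Dphi:N}---is precisely the argument those references carry out. One minor remark: for \eqref{eq:Dphi:near:id} it is slightly cleaner to work directly with $\Phi$ (which itself solves the transport equation $\partial_t\Phi + u\cdot\nabla\Phi=0$ with $\Phi(\cdot,t_0)=\Id$) rather than passing through $(DX)^{-1}$, since then $D_t(D\Phi)=-(D\Phi)(Du)$ and the bound $\|D\Phi-\Id\|_{C^0}\le e^{(t-t_0)\|Du\|_{C^0}}-1$ follows immediately from Gronwall and integration, with no need to invert.
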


In order to take advantage of Lemma~\ref{lem:transport} we also need to appeal to the following standard composition estimate:
\begin{lemma}[Chain rule]
 \label{lem:composition}
 Let $\Psi: \Omega \to \RR$ and $u: \RR^d \to \Omega$ be two smooth functions, with $\Omega\subset \RR^D$. 
Then, for every $N \in \NN\setminus \{0\}$ there is a constant $C$ such that
\begin{align}
\norm{D^N \left( \Psi\circ u \right)}_{C^0} &\leq C \left(\norm{D \Psi}_{C^0} \norm{D^N u}_{C^0} +\|D\Psi\|_{C^{N-1}} \norm{u}_{C^0}^{N-1} \norm{D^N u}_{C^0} \right)\, .
\label{eq:chain:i} \\
\norm{D^N \left( \Psi\circ u \right)}_{C^0} &\leq C \left(\norm{D \Psi}_{C^0} \norm{D^N u}_{C^0} +\|D\Psi\|_{C^{N-1}} \norm{D u}_{C^0}^{N} \right)\, .
\label{eq:chain}
\end{align} 
where $C = C (N,d,D)$.
\end{lemma}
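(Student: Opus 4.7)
The plan is to apply Fa\`a di Bruno's formula for the $N$-th derivative of a composition and then control the resulting multilinear expressions via the Gagliardo-Nirenberg interpolation inequality; the two estimates \eqref{eq:chain:i} and \eqref{eq:chain} will correspond to two natural choices of interpolation endpoint.

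First, Fa\`a di Bruno gives the finite expansion
\[
D^N(\Psi \circ u)(x) = \sum_{k=1}^{N}\;\sum_{\vec\alpha} c_{\vec\alpha}\,(D^k \Psi)(u(x)) \cdot \bigotimes_{i=1}^k D^{\alpha_i} u(x),
\]
where $\vec\alpha = (\alpha_1,\ldots,\alpha_k)$ runs over ordered $k$-tuples of positive integers with $\sum_i \alpha_i = N$, and the $c_{\vec\alpha}$ are universal combinatorial (Bell) constants depending only on $N$. The $k=1$ contribution is exactly $(D\Psi)(u)\cdot D^N u$, which produces the first summand $\norm{D \Psi}_{C^0}\norm{D^N u}_{C^0}$ in both bounds. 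For $k\geq 2$ each $\alpha_i$ satisfies $1\leq \alpha_i\leq N-1$, and I would use the crude estimate $\|D^k\Psi\|_{C^0}\leq \|D\Psi\|_{C^{N-1}}$.

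Second, to estimate $\prod_{i=1}^k \|D^{\alpha_i} u\|_{C^0}$ I would invoke two forms of Gagliardo-Nirenberg. For \eqref{eq:chain:i} I use
\[
\|D^j u\|_{C^0} \lesssim \|u\|_{C^0}^{1-j/N}\,\|D^N u\|_{C^0}^{j/N}, \qquad 0\leq j\leq N,
\]
so that the product (using $\sum_i\alpha_i=N$) simplifies to $\|u\|_{C^0}^{k-1}\|D^N u\|_{C^0}$; since $k-1\leq N-1$, the factor $\|u\|_{C^0}^{k-1}$ is bounded by a constant depending only on $N$ times $1+\|u\|_{C^0}^{N-1}$, and the constant piece is absorbed into the first summand using $\|D\Psi\|_{C^{N-1}}\geq \|D\Psi\|_{C^0}$. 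For \eqref{eq:chain} I use the scale-invariant version
\[
\|D^j u\|_{C^0} \lesssim \|Du\|_{C^0}^{(N-j)/(N-1)}\,\|D^N u\|_{C^0}^{(j-1)/(N-1)}, \qquad 1\leq j\leq N,
\]
which gives $\|Du\|_{C^0}^{N(k-1)/(N-1)}\|D^N u\|_{C^0}^{(N-k)/(N-1)}$. For $2\leq k\leq N-1$ a Young inequality with conjugate exponents $p=(N-1)/(k-1)$ and $q=(N-1)/(N-k)$ dominates this product by a linear combination of $\|Du\|_{C^0}^N$ and $\|D^N u\|_{C^0}$, while for $k=N$ one has $\alpha_i\equiv 1$ and the product is literally $\|Du\|_{C^0}^N$ with no interpolation needed; the residual $\|D^N u\|_{C^0}$ contribution is again absorbed into the first summand.

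Summing the finitely many tuples $\vec\alpha$ and collecting all combinatorial and interpolation constants into a single $C=C(N,d,D)$ concludes. The main bookkeeping obstacle is the Young-inequality step in \eqref{eq:chain}: one must verify that the exponents $(p,q)=((N-1)/(k-1),(N-1)/(N-k))$ satisfy $1/p+1/q=1$ (an elementary identity) and that $p,q>1$ throughout the range $2\leq k\leq N-1$, which reduces to the automatic inequalities $k-1\leq N-1$ and $N-k\leq N-1$. The Gagliardo-Nirenberg estimates themselves are classical (and may be reduced to the one-dimensional case via standard cutoff and mollification arguments), and Fa\`a di Bruno is purely combinatorial, so no genuinely nontrivial analysis is required.
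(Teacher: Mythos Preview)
Your overall strategy---Fa\`a di Bruno followed by Landau--Kolmogorov interpolation on $u$---is exactly the standard route (the paper itself gives no proof and simply cites \cite{Bu2014,BuDLeIsSz2015}). However, both of your ``absorption'' steps are incorrect as written, and this is a genuine gap rather than sloppy wording.

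Take \eqref{eq:chain} with $2\leq k\leq N-1$. After interpolating the $u$-factors and applying Young with your exponents you obtain
\[
\|D^k\Psi\|_{C^0}\prod_i\|D^{\alpha_i}u\|_{C^0}\;\lesssim\;\|D^k\Psi\|_{C^0}\bigl(\|Du\|_{C^0}^{N}+\|D^Nu\|_{C^0}\bigr).
\]
Bounding $\|D^k\Psi\|_{C^0}\leq\|D\Psi\|_{C^{N-1}}$ handles the first piece, but the residual $\|D\Psi\|_{C^{N-1}}\|D^Nu\|_{C^0}$ is \emph{larger} than the first summand $\|D\Psi\|_{C^0}\|D^Nu\|_{C^0}$ of the target---the inequality $\|D\Psi\|_{C^{N-1}}\geq\|D\Psi\|_{C^0}$ you invoke points the wrong way for absorption. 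The identical problem occurs in your argument for \eqref{eq:chain:i}: the ``constant piece'' coming from $\|u\|_{C^0}^{k-1}\leq 1+\|u\|_{C^0}^{N-1}$ leaves you with $\|D\Psi\|_{C^{N-1}}\|D^Nu\|_{C^0}$, which again cannot be controlled by $\|D\Psi\|_{C^0}\|D^Nu\|_{C^0}$.

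The repair is to interpolate on $\Psi$ as well, not just on $u$. Landau--Kolmogorov gives
\[
\|D^k\Psi\|_{C^0}\;\lesssim\;\|D\Psi\|_{C^0}^{\frac{N-k}{N-1}}\,\|D^N\Psi\|_{C^0}^{\frac{k-1}{N-1}},
\]
and combining this with your interpolation of $\prod_i\|D^{\alpha_i}u\|$ one gets, for \eqref{eq:chain},
\[
\|D^k\Psi\|_{C^0}\prod_i\|D^{\alpha_i}u\|_{C^0}\;\lesssim\;\bigl(\|D\Psi\|_{C^0}\|D^Nu\|_{C^0}\bigr)^{\frac{N-k}{N-1}}\bigl(\|D^N\Psi\|_{C^0}\|Du\|_{C^0}^{N}\bigr)^{\frac{k-1}{N-1}},
\]
to which a \emph{single} application of Young's inequality (with the same exponents you wrote down) yields exactly the two summands on the right-hand side; the analogous rearrangement works for \eqref{eq:chain:i}. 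In short: do not split off the $\Psi$-factor before applying Young.
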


\subsection{Inverse of the divergence}
\label{sec:BB}

In this section we prove a number of estimates for the operator $\BB$ defined in Definition~\ref{def:BB}.
We take advantage of the frequency localization of our perturbation and establish the following lemma.
\begin{lemma}[Inverse divergence gains a derivative]
\label{lem:BB}
Let $\BB$ be as defined in Definition~\ref{def:BB}. For $f \colon \TT^2 \to \CC^2$ that is smooth,  we have that 
\[
\div (\BB f) = \PP \left(f - \frac{1}{|\TT^2|} \int_{\TT^2} f(x) dx\right), 
\]
and $\BB f$ is a symmetric, trace-free matrix. 
Fix $\lambda  \geq 1$, and denote by $P_{\approx \lambda}$ a Fourier multiplier operator with symbol that is supported on $\{ \xi \colon \lambda/2 \leq |\xi|\leq 2\lambda\}$ and is identically $1$ on $\{ \xi \colon 3\lambda/4 \leq |\xi| \leq 3\lambda/2\}$. Then, for a smooth functions $f,g \colon \TT^2 \to \CC$, with $\supp ( \hat g(\xi) ) \subset \{\xi \colon |\xi|\leq \lambda/4\}$, we have
\begin{align}
\norm{\BB \big( g(x)  P_{\approx \lambda} f(x) \big)}_{C^0} 
&\lesssim  \frac{\norm{g}_{C^0} \norm{P_{\approx \lambda} f}_{C^0} }{\lambda} \lesssim  \frac{\norm{g}_{C^0} \norm{f}_{C^0} }{\lambda} 
\label{eq:BB:L:infty} \\
\norm{[ \BB , g(x)]  P_{\approx \lambda} f(x) }_{C^0} 
&\lesssim  \frac{\norm{g}_{C^1} \norm{P_{\approx \lambda} f}_{C^0} }{\lambda^2}  \lesssim \frac{\norm{g}_{C^1} \norm{f}_{C^0} }{\lambda^2} 
\label{eq:BB:comm}
\end{align}
for some implicit universal constant $C> 0$.
\end{lemma}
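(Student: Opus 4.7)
The structural claims follow directly from the explicit formula in Definition~\ref{def:BB}. Symmetry of $(\BB f)^{ij}$ is immediate. Computing $\partial_j (\BB f)^{ij} = -\Delta \Lambda^{-2} f^i - \partial_i \Lambda^{-2} \partial_j f^j = f^i - \partial_i \Lambda^{-2} \div f$ on zero-mean vector fields (so that $-\Delta \Lambda^{-2} = \Id$), and then applying the same formula to $\PP f$ in place of $f$, gives the divergence identity. Trace-freeness for divergence-free $\PP f$ then follows from $\Tr(\BB \PP f) = -2 \Lambda^{-2} \div(\PP f) = 0$.

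For \eqref{eq:BB:L:infty} and \eqref{eq:BB:comm}, the plan is to exploit the frequency separation between $g$ and $P_{\approx\lambda} f$. Since $\hat g$ is supported in $\{|\xi|\leq \lambda/4\}$ and $P_{\approx\lambda} f$ has Fourier support in $\{\lambda/2\leq |\xi|\leq 2\lambda\}$, the product $g \cdot P_{\approx\lambda} f$ has Fourier support in the annulus $\{\lambda/4 \leq |\xi|\leq 9\lambda/4\}$. Let $\tilde P_{\approx\lambda}$ denote an auxiliary smooth Fourier projector that is identically $1$ on this annulus and supported in $\{\lambda/8 \leq |\xi|\leq 8\lambda\}$. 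Then we may replace $\BB$ by $M := \BB\, \tilde P_{\approx\lambda}$ in all the expressions involved, since
\[
\BB(g\, P_{\approx\lambda} f) = M (g\, P_{\approx\lambda} f), \qquad \BB P_{\approx\lambda} f = M P_{\approx\lambda} f.
\]

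The operator $M$ is a (matrix-valued) Fourier multiplier whose symbol $m(\xi)$ is of homogeneity $-1$ smoothly truncated to an annulus of size $\lambda$; explicitly $m(\xi) = \lambda^{-1}\tilde m(\xi/\lambda)$, with $\tilde m$ smooth and compactly supported in an annulus $\{|\eta|\sim 1\}$. Consequently $M$ is convolution with a kernel $K_{\approx\lambda}(x) = \lambda\, \tilde K(\lambda x)$, where $\tilde K = \mathcal F^{-1}\tilde m$ is Schwartz, and so
\[
\bigl\||x|^a K_{\approx\lambda}\bigr\|_{L^1(\RR^2)} \lesssim \lambda^{-1-a}, \qquad a \geq 0.
\]
With $a=0$, Young's inequality immediately gives \eqref{eq:BB:L:infty}.

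For the commutator, writing out the convolution yields
\[
[M, g] (P_{\approx\lambda} f)(x) = \int_{\RR^2} K_{\approx\lambda}(y)\bigl(g(x-y) - g(x)\bigr)(P_{\approx\lambda} f)(x-y)\,dy,
\]
and the fundamental theorem of calculus gives $g(x-y)-g(x) = -\int_0^1 y\cdot\nabla g(x-ty)\,dt$. Applying Young's inequality with the kernel $|y| K_{\approx\lambda}(y)$ (and the $a=1$ bound above) yields \eqref{eq:BB:comm}. The only mildly technical point is the kernel bound for $M$, but once one rescales $\xi=\lambda\eta$ this reduces to the Schwartz decay of a fixed bump symbol, uniformly in $\lambda$; the presence of the Leray projector $\PP$ in $\BB$ is harmless since $\PP$ is a zero-order Fourier multiplier whose symbol is smooth on the support of $\tilde P_{\approx\lambda}$.
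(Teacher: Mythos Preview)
Your proof is correct and follows essentially the same route as the paper: reduce $\BB$ to $\BB\tilde P_{\approx\lambda}$ via the frequency support of $g\cdot P_{\approx\lambda}f$, use the $L^1$ kernel bounds $\||x|^a K_{\approx\lambda}\|_{L^1}\lesssim\lambda^{-1-a}$ for the localized order $-1$ multiplier, and apply Young's inequality (respectively, the mean value theorem plus Young) for \eqref{eq:BB:L:infty} and \eqref{eq:BB:comm}. Your treatment of the structural claims is in fact more explicit than the paper's, which simply cites Definition~\ref{def:BB}.
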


\begin{proof}[Proof of Lemma~\ref{lem:BB}]
The first assertions follow directly from the definition of $\BB$. In order to prove \eqref{eq:BB:L:infty} we note that by the assumption on the frequency support of $g$, we have
\begin{align*}
g(x)  P_{\approx \lambda} f(x) = \tilde P_{\approx \lambda}  \big( g(x)  P_{\approx \lambda} f(x) \big)
\end{align*}
where we have denoted
\begin{align*}
\tilde P_{\approx \lambda}  = P_{\approx 2^{j-2} \lambda} + P_{\approx 2^{j-1} \lambda} + P_{\approx 2^j \lambda} + P_{\approx 2^{j+1} \lambda} .
\end{align*}
Thus, by the definition of $\BB$ and applying twice the Bernstein inequality on $L^\infty$, we obtain
\begin{align*}
\norm{\BB \big( g(x) P_{\approx \lambda} f(x) \big)}_{C^0} \lesssim \frac{1}{\lambda} \norm{g(x)  P_{\approx \lambda} f(x) }_{C^0} \lesssim \frac{1}{\lambda} \norm{g}_{C^0} \norm{f}_{C^0}.
\end{align*}

Lastly, in order to prove \eqref{eq:BB:comm} we note that since $\BB$ has components which are Fourier multipliers, we have
\begin{align*}
[\BB , g] P_{\approx \lambda} f 
&= \BB \big( g P_{\approx \lambda} f \big) - g \BB P_{\approx \lambda} f  \\
&= \BB \tilde P_{\approx \lambda} \big( g P_{\approx \lambda} f \big) - g \BB \tilde P_{\approx \lambda}  P_{\approx \lambda} f  \notag\\
&= [ \BB \tilde P_{\approx \lambda} , g] P_{\approx \lambda} f.
\end{align*}
Denoting by ${\mathcal K}_{\approx\lambda}$ the real convolution kernel corresponding to the Fourier multiplier operator $\BB \tilde P_{\approx \lambda} $ (which is a symbol of order $-1$), one may check that it obeys the bounds
\begin{align}
\norm{|x|^b \nabla^a {\mathcal K}_{\approx \lambda} (x)}_{L^1(\RR^2)} \leq C \lambda^{a-b-1}
\label{eq:BB:K:aux}
\end{align}
for all $a,b\geq 0$, and some constant $C = C(a,b)$. Therefore, by the mean value theorem
\begin{align*}
[ \BB \tilde P_{\approx \lambda} , g] P_{\approx \lambda} f(x) 
&= \int_{\RR^d} (g(y) - g(x)) {\mathcal K}_{\approx \lambda}(x-y) P_{\approx \lambda} f(y) dy \notag\\
&=- \int_{\RR^d} \left(\int_0^1 \nabla g(y - \tau (y-x)) d\tau\right)\cdot (y-x) {\mathcal K}_{\approx \lambda}(x-y) P_{\approx \lambda} f(y) dy
\end{align*}
and the kernel estimate \eqref{eq:BB:K:aux} with $b=1$ and $a=0$ we arrive at
\begin{align*}
\norm{[ \BB \tilde P_{\approx \lambda} , g] P_{\approx \lambda} f}_{C^0} 
&\lesssim \norm{D g}_{C^0} \norm{|x| {\mathcal K} _{\approx \lambda}(x)}_{L^1} \norm{P_{\approx \lambda} f}_{C^0} \notag\\
&\lesssim \lambda^{-2} \norm{g}_{C^1} \norm{f}_{C^0}.
\end{align*}
This concludes the proof of the lemma.
\end{proof}

\subsection{Calderon commutator}
\label{sec:commutator}
We recall cf.~\cite[Theorem 10.3, Page 99]{Le2002} and~\cite[Lemma 2.2, Page 50]{Ma2008}.
\begin{lemma}[Calderon commutator]
\label{lem:Calderon}
Let $p \in (1,\infty)$ and $\varphi \in W^{1,\infty}(\TT^2)$. Then for any $v \in L^p(\TT^2)$ with zero mean on $\TT^2$ we have that
\begin{align}
\norm{ [ \Lambda, \varphi] v}_{L^p} \lesssim_p \norm{\varphi}_{W^{1,\infty}} \norm{v}_{L^p}
\label{eq:Calderon:Lp}
\end{align}
where the constant implicitly depends on $p$. Moreover, for $s \in [0,1]$, if $\varphi \in W^{2,\infty} (\TT^2)$ and $v \in H^s(\TT^2)$ has zero mean on $\TT^2$, then\footnote{{The constant in \eqref{eq:Calderon:Hs} is not sharp. See e.g.~\cite{Ma2008} where the constant is given as $\max\left\{ \norm{\nabla \varphi}_{C^0} , \norm{\varphi}_{\dot{B}^{2}_{2,\infty}} \right\}$.}}
\begin{align}
\norm{ [ \Lambda, \varphi] v}_{H^s} \lesssim_s   \norm{\varphi}_{W^{2,\infty}} \norm{v}_{H^s}
\label{eq:Calderon:Hs}
\end{align}
\end{lemma}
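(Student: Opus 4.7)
The proof would split naturally into two stages: the $L^p$ bound \eqref{eq:Calderon:Lp} is a genuine Calder\'on commutator result, while the $H^s$ bound \eqref{eq:Calderon:Hs} will follow from it by an elementary commutator identity plus interpolation.

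Stage 1 (the $L^p$ bound). Transferring periodic functions to $\RR^2$ as described in the notation section, the operator $\Lambda=(-\Delta)^{1/2}$ admits the principal-value representation
$$\Lambda f(x) = c\,\mathrm{p.v.}\!\int_{\RR^2}\frac{f(x)-f(y)}{|x-y|^3}\,dy,$$
for an appropriate constant $c$. Consequently
$$[\Lambda,\varphi]v(x) = c\,\mathrm{p.v.}\!\int_{\RR^2}\frac{\varphi(y)-\varphi(x)}{|x-y|^3}\,v(y)\,dy.$$
Writing $\varphi(y)-\varphi(x)=(y-x)\cdot\int_0^1\nabla\varphi(x+s(y-x))\,ds$ factors out one power of $|x-y|$, leaving the operator with kernel $(y-x)/|x-y|^3$ (a standard Calder\'on--Zygmund kernel, homogeneous of degree $-2$ in $\RR^2$ with vanishing spherical mean) convolved against a factor bounded pointwise by $\|\nabla\varphi\|_{L^\infty}$. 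The $L^p$ boundedness with operator norm $\lesssim_p\|\nabla\varphi\|_{L^\infty}$ is then the classical first-order Calder\'on commutator estimate, which can either be quoted directly from \cite{Le2002,Ma2008} or derived from the T(1) theorem after checking weak boundedness and $T(\mathbf{1}),T^*(\mathbf{1})\in\mathrm{BMO}$ with the stated dependence on $\nabla\varphi$.

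Stage 2 (the $H^s$ bound). The case $s=0$ is Stage 1 with $p=2$. For $s=1$, distributing $\partial_i$ through $\Lambda(\varphi v)-\varphi\Lambda v$ gives the algebraic identity
$$\partial_i[\Lambda,\varphi]v = [\Lambda,\partial_i\varphi]v + [\Lambda,\varphi]\partial_i v,$$
and applying \eqref{eq:Calderon:Lp} with $p=2$ to each commutator separately yields
$$\norm{\partial_i[\Lambda,\varphi]v}_{L^2}\lesssim \norm{\partial_i\varphi}_{W^{1,\infty}}\norm{v}_{L^2}+\norm{\varphi}_{W^{1,\infty}}\norm{\partial_i v}_{L^2}\lesssim \norm{\varphi}_{W^{2,\infty}}\norm{v}_{H^1}.$$
Combined with the $L^2$ bound from Stage 1 this establishes the case $s=1$. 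Intermediate values $s\in(0,1)$ then follow by interpolating the linear map $v\mapsto[\Lambda,\varphi]v$ between its endpoint bounds $L^2\to L^2$ and $H^1\to H^1$; the zero-mean hypothesis on $v$ is preserved throughout since $[\Lambda,\varphi]v$ has zero mean whenever $v$ does (as $\Lambda$ annihilates constants and $\int\Lambda(\varphi v)=\int\varphi\Lambda v\cdot 1=0$ by self-adjointness modulo constants).

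The only genuine obstacle is Stage 1, which is precisely the content of the classical Calder\'on commutator theorem; everything in Stage 2 is either a one-line commutator identity or standard Sobolev interpolation. Accordingly I would, as the authors do, simply invoke \cite{Le2002,Ma2008} for Stage 1 and present only the identity-plus-interpolation reduction of Stage 2 in any detail.
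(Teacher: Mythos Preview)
Your proposal is correct and matches the paper's own argument essentially line for line: the paper likewise invokes the classical Calder\'on commutator result (citing \cite{Le2002,Ma2008}) for \eqref{eq:Calderon:Lp}, then obtains \eqref{eq:Calderon:Hs} from the endpoint cases $s=0$ (taking $p=2$) and $s=1$ via the identity $\nabla[\Lambda,\varphi]=[\Lambda,\varphi]\nabla+[\Lambda,\nabla\varphi]$, followed by interpolation. The only extra remark the paper makes is that on $\TT^2$ one writes the kernel of $\Lambda$ via Poisson summation, so that the $k\neq0$ lattice shifts contribute an $L^1$ kernel and the $k=0$ term reduces to the $\RR^2$ case --- a point you implicitly subsume under the transference principle.
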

Note that in the aforementioned references the results are stated for functions defined on $\RR^2$, whose Fourier support is at a positive distance from the origin. The same proofs work in the periodic case $\TT^2$, if the functions we consider have zero mean. To see that \eqref{eq:Calderon:Lp} holds one uses the Poisson summation convention to write the kernel associated to $\Lambda$ as in~\cite{CoCo2004}, so that off the diagonal, the singular integral kernel $K$ associated to $[\Lambda,\varphi]$ is given by
\[
K(x,y) = c  \sum_{k\in \ZZ^2}\frac{\varphi(x) - \varphi(y)}{|x-y-k|^3}  
\]
For $k = 0$ the proof closely follows the $\RR^2$ case, while for $ k\neq 0$ we are dealing with an $L^1$ kernel.
Assertion \eqref{eq:Calderon:Hs} spaces follows from the case $s=0$ (which holds by letting $p=2$ in \eqref{eq:Calderon:Lp}), the case $s=1$ (which holds since $\nabla [\Lambda, \varphi] = [\Lambda, \varphi] \nabla + [\Lambda, \nabla \varphi]$ and the bound \eqref{eq:Calderon:Lp} with $p=2$), and interpolation. We omit further details.

\subsection{Material derivatives and convolution operators}
We recall (similarly to~\cite[Lemma 7.2]{IsVi2015}) a commutator estimate involving convolution operators and material derivatives. 
\begin{lemma}
\label{lem:u:grad:commutator}
Let $s\in \RR$, $\lambda \geq 1$, and let $T_K$ be an order $s$ convolution operator localized at length scale $\lambda^{-1}$. That is, $T_K$ acts on smooth functions $f$ as
\[
T_K f(x) = \int_{\RR^2} K(y) f(x-y) dy
\]
for some kernel $K \colon \RR^2 \to \RR$ that obeys
\begin{align}
\norm{|x|^a \nabla^b K(x)}_{L^1(\RR^2)} \lesssim \lambda^{b-a + s}
\label{eq:K:convolution:assumption}
\end{align}
for all $0 \leq a, |b|\leq 1$ and some implicit constants $C  = C(a,b)$.
Then, for any smooth function $f \colon \TT^2 \to \CC$ and smooth incompressible vector field $u \colon \TT^2 \to \RR^2$ we have
\begin{align*}
\norm{[ u \cdot \nabla, T_K ] f}_{C^0} \leq \lambda^s \norm{\nabla u}_{C^0} \norm{f}_{C^0}.
\end{align*}
Similarly,  we have
\begin{align}
\norm{[ b, T_K ] f}_{C^0} \leq \lambda^{s-1} \norm{\nabla b}_{C^0} \norm{f}_{C^0}
\label{eq:comm:multiplication}
\end{align}
for smooth functions $b, f \colon \TT^2 \to \CC$.
\end{lemma}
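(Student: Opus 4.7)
\textbf{Proof plan for Lemma~\ref{lem:u:grad:commutator}.} The plan is to expand both commutators in their integral kernel representation and exploit the cancellation $u(x)-u(x-y)$ (respectively $b(x)-b(x-y)$) arising naturally from the commutator structure; for the transport commutator, an integration by parts in $y$ using incompressibility is needed so that the final estimate involves only one derivative on $u$ and no derivative on $f$.

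First I would write out
\begin{align*}
[u \cdot \nabla, T_K] f(x)
&= u(x) \cdot \int_{\RR^2} K(y)\,\nabla f(x-y)\, dy - \int_{\RR^2} K(y)\, u(x-y) \cdot \nabla f(x-y)\, dy \\
&= -\int_{\RR^2} K(y)\, \bigl(u(x) - u(x-y)\bigr) \cdot \nabla_y f(x-y)\, dy\,,
\end{align*}
using $\nabla f(x-y) = -\nabla_y f(x-y)$ in the last step. Integrating by parts in $y$ (all objects are smooth and $\TT^2$-periodic in $x$, so no boundary term) and using the key incompressibility identity $\nabla_y \cdot u(x-y) = -(\nabla \cdot u)(x-y) = 0$, this becomes
\begin{align*}
[u \cdot \nabla, T_K] f(x) = \int_{\RR^2} \nabla_y K(y) \cdot \bigl(u(x) - u(x-y)\bigr)\, f(x-y)\, dy\,.
\end{align*}
This is the step where incompressibility is essential: without it there would be an additional term involving $K(y)(\nabla\cdot u)(x-y) f(x-y)$ that would not admit the desired bound.

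Next I would apply the mean value theorem to get the pointwise bound $|u(x) - u(x-y)| \leq \|\nabla u\|_{C^0} |y|$, and use the assumed kernel bound \eqref{eq:K:convolution:assumption} with $a=1$ and $|b|=1$, which gives $\||y|\,|\nabla K(y)|\|_{L^1} \lesssim \lambda^{1-1+s} = \lambda^s$. Combining, we obtain
\begin{align*}
\|[u \cdot \nabla, T_K] f\|_{C^0} \leq \|\nabla u\|_{C^0}\,\|f\|_{C^0} \int_{\RR^2} |y|\,|\nabla_y K(y)|\, dy \lesssim \lambda^s \|\nabla u\|_{C^0}\,\|f\|_{C^0}\,,
\end{align*}
which is the desired transport commutator estimate.

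For the multiplication commutator \eqref{eq:comm:multiplication}, the argument is simpler (no integration by parts is needed): one writes
\begin{align*}
[b, T_K]f(x) = \int_{\RR^2} K(y)\bigl(b(x) - b(x-y)\bigr) f(x-y)\, dy\,,
\end{align*}
applies the mean value theorem, and uses \eqref{eq:K:convolution:assumption} with $a=1,\ b=0$, which yields $\||y|K(y)\|_{L^1}\lesssim \lambda^{s-1}$, giving the claimed $\lambda^{s-1}\|\nabla b\|_{C^0}\|f\|_{C^0}$ bound. I do not expect any serious obstacle; the only subtlety worth flagging is the need to integrate by parts in the $y$ variable and to use the divergence-free condition on $u$ in order to avoid a term that would only allow control by $\|u\|_{C^1}\|f\|_{C^1}$ rather than $\|\nabla u\|_{C^0}\|f\|_{C^0}$.
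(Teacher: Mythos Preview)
Your proof is correct and follows essentially the same approach as the paper: expand the commutator as an integral involving $u(x)-u(x-y)$, integrate by parts in $y$ using $\div u = 0$, and then apply the mean value theorem together with the kernel bound \eqref{eq:K:convolution:assumption} with $a=|b|=1$ (respectively $a=1$, $b=0$ for the multiplication commutator). The only tiny imprecision is your justification for dropping the boundary term---periodicity in $x$ is not the relevant point; rather, it is the integrability/decay of $K$ in the $y$-variable that makes the boundary term at infinity vanish---but this is harmless and the paper itself does not dwell on it.
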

\begin{proof}[Proof of Lemma~\ref{lem:u:grad:commutator}]
The proof is direct, and uses that $\div u=0$. We have
\begin{align*}
\left| T_K(u\cdot \nabla f)(x) - u(x) \cdot \nabla T_K f  (x) \right|
&= \left| \int_{\RR^2} \left( u(x) - u(x-y) \right) \cdot \nabla f(x-y) K(y) dy \right| \notag\\
&= \left| \int_{\RR^2} \left( u(x) - u(x-y) \right) \cdot \nabla K(y) f(x-y) dy \right| \notag\\
&\leq  \norm{\nabla u}_{C^0} \int_{\RR^2} |f(x-y)| |y| | \nabla K(y)| dy \notag\\
&\leq \norm{\nabla u}_{C^0} \norm{f}_{C^0} \norm{|y| \nabla K(y)}_{L^1}
\end{align*}
at which stage we use that the kernel is integrable cf.~\eqref{eq:K:convolution:assumption}.

For the second assertion, we use the mean value theorem. We have
\begin{align*}
\left| T_K(b\, f)(x) - b(x)  T_K f  (x) \right|
&= \left| \int_{\RR^2} \left( b(x) - b(x-y) \right)  f(x-y) K(y) dy \right| \notag\\
&= \left| \int_{\RR^2} \left( \int_0^1 \nabla b(x - \lambda y) d\lambda \right) \cdot y K(y)f(x-y) dy \right| \notag\\
&\leq \norm{\nabla b}_{C^0} \norm{f}_{C^0} \norm{|y| K(y)}_{L^1}
\end{align*}
so that the proof is completed upon using \eqref{eq:K:convolution:assumption}.
\end{proof}

\begin{remark}
When $s=0$, examples of such operators $T_K$ are given by zero-order Fourier multiplier operators with frequency support inside a shell at $|\xi| \approx \lambda$. For instance, the kernel associated to the operator $\PP_{q+1,k}$ obeys \eqref{eq:K:convolution:assumption} for $\lambda = \lambda_{q+1}$, and $s=0$. Similarly, the kernel associated to the Fourier multiplier operator  $\Lambda \PP_{q+1,k}$ obeys \eqref{eq:K:convolution:assumption} for $\lambda = \lambda_{q+1}$, and $s=1$, while the kernel of $\BB \PP_{q+1,k}$ obeys \eqref{eq:K:convolution:assumption} for $\lambda = \lambda_{q+1}$, and $s=-1$.
\end{remark}

An immediate consequence of Lemma~\ref{lem:u:grad:commutator} is:
\begin{corollary}
\label{cor:Dt:commutator}
Let $\lambda \geq 1$, $s\in \RR$, and let $K$ be a kernel which obeys \eqref{eq:K:convolution:assumption}. Given a smooth divergence free vector field  $u \colon \TT^2 \to\RR^2$, we have
\begin{align}
 \norm{[D_t , T_K ]   f }_{C^{0}} \lesssim \lambda^s \norm{\nabla u}_{C^0} \norm{f}_{C^0}.
 \label{eq:comm:material:convolution}
\end{align}
where as usual we denote $D_t = \partial_t + u \cdot \nabla$.
\end{corollary}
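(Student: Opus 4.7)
The plan is to observe that the corollary is essentially immediate from Lemma~\ref{lem:u:grad:commutator}, because the convolution operator $T_K$ acts only in the spatial variable. Writing out $D_t = \partial_t + u \cdot \nabla$, I would split the commutator as
\[
[D_t, T_K] f = [\partial_t, T_K] f + [u \cdot \nabla, T_K] f.
\]

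The first term vanishes: since the kernel $K = K(y)$ is independent of time, differentiating under the convolution integral shows that $\partial_t$ and $T_K$ commute on smooth space-time functions $f$. Thus the entire commutator reduces to the purely spatial commutator $[u \cdot \nabla, T_K] f$, which is precisely the object estimated in Lemma~\ref{lem:u:grad:commutator}. Applying that lemma gives
\[
\norm{[D_t, T_K] f}_{C^0} = \norm{[u \cdot \nabla, T_K] f}_{C^0} \lesssim \lambda^s \norm{\nabla u}_{C^0} \norm{f}_{C^0},
\]
as desired. There is no real obstacle here; the only thing to verify is that $T_K$ and $\partial_t$ genuinely commute, which is a formal differentiation under the integral sign justified by the integrability assumption \eqref{eq:K:convolution:assumption} on the kernel and smoothness of $f$ in $t$.
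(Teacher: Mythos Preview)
Your proposal is correct and matches the paper's approach exactly: the paper states this as ``an immediate consequence of Lemma~\ref{lem:u:grad:commutator}'' with no further proof, and your observation that $[\partial_t, T_K]=0$ (since $K$ is time-independent) reduces the commutator to $[u\cdot\nabla, T_K]$, which is precisely what that lemma bounds.
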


\subsection{Material derivatives and bilinear convolution operators}
\label{app:pseudo:product}
The bilinear convolution operators we are interested here arise from pseudo-product operators, as defined by Coifman and Meyer~\cite{CoMe1978}. Equivalently, these are translation invariant multilinear operators, see e.g.~\cite[Section 6]{GrTo2002} and~\cite[Chapter 2.13]{MuSh2013} for details.  Let $\xi = (\xi_1, \xi_2) \in \RR^2 \times \RR^2$. Let $M \colon \RR^2 \times \RR^2 \to \RR$ be a smooth multiplier. 
For two Schwartz functions $f_1, f_2 \colon \RR^2\to \RR$, define the bilinear pseudo-product operator $S_M (f_1,f_2)$ by 
\begin{align}
S_M (f_1,f_2)(x) = \frac{1}{(2\pi)^2} \intint_{\RR^2\times \RR^2} M(\xi_1,\xi_2) \hat{f_1}(\xi_1) \hat{f_2}(\xi_2) e^{i x \cdot (\xi_1+\xi_2)} d\xi_1 d\xi_2.
\label{eq:pseudo:product}
\end{align}
Equivalently, denoting by $K_M(z_1,z_2)= M^{\vee}(z_1,z_2)$ the inverse Fourier transform of $M$ in $\RR^2 \times \RR^2$, we may write 
\begin{align}
S_M(f_1,f_2)(x) = \intint_{\RR^2\times\RR^2} K_M(x-y_1,x-y_2) f_1(y_1) f_2(y_2) dy_1 dy_2
\label{eq:pseudo:product:real}
\end{align}
for Schwartz functions $f_1,f_2\colon \RR^2\to \RR$. As opposed to~\cite{CoMe1978,GrTo2002} which consider kernels of Calder\'on-Zygmund type, here we only need to consider kernels $K_M$ which obey
\begin{align}
\norm{|z|^{a} \partial_z^b K_M(z)}_{L^1(\RR^2\times\RR^2)} \leq C_{a,b} \lambda^{|b|-|a|}
\label{eq:K:convolution:assumption:multi}
\end{align}
for some $\lambda \geq 1$, all $0 \leq |a|, |b|\leq 1$ and some constants $C_{a,b}>0$. Examples are the kernels defined in \eqref{eq:multilinear:kernels}. Therefore, in contrast to~\cite[Theorem 1]{CoMe1978}, for us the boundedness of $S_M$ from $L^{p_1} \times L^{p_2} \to L^p$, where $1/p_1 + 1/p_2 = 1/p$ is automatic, and even includes the case $p_1=p_2=p = \infty$, which is the only case needed in this paper.

Instead, here we are interested in the commutator  between $S_M$ and $D_{t} = \partial_t + u\cdot\nabla_x$, where  $u\colon \RR^2\to\RR^2$, which is divergence free vector field. We have
\begin{lemma}
\label{lem:multi:Dt:commutator}
Let $\lambda \geq 1$, let $K_M$ be a kernel which obeys \eqref{eq:K:convolution:assumption:multi}, and let $S_M$ be the corresponding bilinear convolution operator given by \eqref{eq:pseudo:product:real}. Given a smooth divergence free vector field  $u \colon \TT^2 \to\RR^2$, we denote
\begin{align}
[D_t, S_M](f_1,f_2) = D_t (S_M(f_1,f_2)) - S_M(D_t f_1,f_2) - S_M(f_1, D_t f_2)  .
\label{eq:comm:Dt:SM}
\end{align}
Then we have 
\begin{align}
 \norm{[D_t, S_M](f_1,f_2)}_{C^{0}} \leq  C \norm{\nabla u}_{C^0} \norm{f_1}_{C^0} \norm{f_2}_{C^0}
 \label{eq:comm:material:convolution:multi}
\end{align}
for some constant $C>0$, which is independent of $\lambda$.
\end{lemma}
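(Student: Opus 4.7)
The plan is to follow very closely the strategy used in Lemma~\ref{lem:u:grad:commutator} (and its corollary), adapting the single-variable integration-by-parts trick to the bilinear setting. First, since $K_M$ is a spatial kernel independent of $t$, the partial derivative $\partial_t$ commutes with $S_M$ in the obvious way: $\partial_t S_M(f_1,f_2) = S_M(\partial_t f_1, f_2) + S_M(f_1, \partial_t f_2)$. Hence the entire commutator reduces to the transport part
\begin{align*}
[D_t, S_M](f_1, f_2)(x) = u(x) \cdot \nabla S_M(f_1,f_2)(x) - S_M(u\cdot \nabla f_1,f_2)(x) - S_M(f_1, u\cdot \nabla f_2)(x).
\end{align*}

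Next, I would rewrite each of these three terms as an integral against $K_M$ or its derivatives. For the first term, note that $\nabla_x K_M(x-y_1,x-y_2) = (\nabla_{z_1} K_M + \nabla_{z_2} K_M)(x-y_1,x-y_2)$. For the second and third terms, I integrate by parts in $y_1$ (respectively $y_2$), using $\div u = 0$ to move the derivative onto $K_M$; this converts $u(y_i)\cdot \nabla_{y_i} K_M$ into $-u(y_i)\cdot \nabla_{z_i} K_M$ with the proper sign flip under the change of variables. Combining the three terms produces the key identity
\begin{align*}
[D_t,S_M](f_1,f_2)(x)
&= \iint_{\RR^2\times\RR^2} (u(x)-u(y_1))\cdot \nabla_{z_1} K_M(x-y_1,x-y_2)\, f_1(y_1) f_2(y_2)\, dy_1\, dy_2 \\
&\quad + \iint_{\RR^2\times\RR^2} (u(x)-u(y_2))\cdot \nabla_{z_2} K_M(x-y_1,x-y_2)\, f_1(y_1) f_2(y_2)\, dy_1\, dy_2.
\end{align*}

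Finally I would close the estimate by applying the mean value theorem to $u$, which yields $|u(x)-u(y_i)| \leq \|\nabla u\|_{C^0}|x-y_i|$, and then taking absolute values and pulling out the $C^0$ norms of $f_1$ and $f_2$:
\begin{align*}
\|[D_t,S_M](f_1,f_2)\|_{C^0} \leq \|\nabla u\|_{C^0}\|f_1\|_{C^0}\|f_2\|_{C^0}\left(\||z_1|\nabla_{z_1} K_M\|_{L^1} + \||z_2|\nabla_{z_2} K_M\|_{L^1}\right).
\end{align*}
Since $|z_i| \leq |z|$ and $|\nabla_{z_i} K_M|\leq |\nabla_z K_M|$, both $L^1$ quantities are bounded by $\||z|\,|\nabla_z K_M|\|_{L^1(\RR^2\times\RR^2)}$, which by the hypothesis \eqref{eq:K:convolution:assumption:multi} with $|a|=|b|=1$ is dominated by $C\lambda^{1-1} = C$, a constant independent of $\lambda$. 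This gives exactly the desired bound \eqref{eq:comm:material:convolution:multi}.

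The only mildly delicate point — and the place I would take care — is the integration by parts step: one must justify the vanishing of boundary terms (immediate for $f_1,f_2$ Schwartz, or by periodicity after the usual Poisson-summation transference) and track signs carefully when transferring $\nabla_{y_i}$ to $\nabla_{z_i}$ on $K_M$. Once the identity is in place, the bound is essentially a two-line consequence of the mean-value theorem plus the $|a|=|b|=1$ case of the kernel hypothesis; no smoothness beyond one derivative of $K_M$ and no cancellation in frequency is required, which is why this works even for non-Calder\'on–Zygmund kernels.
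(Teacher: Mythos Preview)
Your proof is correct and follows essentially the same approach as the paper's own proof: both reduce to the transport part, integrate by parts in $y_1$ and $y_2$ using $\div u = 0$ to obtain the key identity with factors $(u(x)-u(y_i))\cdot \nabla_{z_i} K_M$, and then close via the mean value theorem together with the $|a|=|b|=1$ case of the kernel hypothesis~\eqref{eq:K:convolution:assumption:multi}.
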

\begin{proof}[Proof of Lemma~\ref{lem:multi:Dt:commutator}]
Upon taking a material derivative of \eqref{eq:pseudo:product:real} and using the product rule, we obtain
\begin{align*}
&D_{t}(S_M(f_1,f_2))(x) - S_M(D_t f_1,f_2)(x) - S_M(f_1,D_t f_2)(x) \notag\\
&= \intint_{\RR^2\times\RR^2} u^j(x) \partial_{x}^j K_M(x-y_1,x-y_2)  f(y_1) f(y_2) dy_1 dy_2 \notag\\
&\qquad - \intint_{\RR^2\times\RR^2}  K_M(x-y_1,x-y_2)  u^j(y_1) \partial_{y_1}^j f(y_1) f(y_2) dy_1 dy_2 \notag\\
&\qquad - \intint_{\RR^2\times\RR^2}   K_M(x-y_1,x-y_2)  f(y_1) u^j(y_2) \partial_{y_2}^j f(y_2) dy_1 dy_2 \notag\\
&= \intint_{\RR^2\times\RR^2} \left(u^j(x) \partial_{z_1}^j K_M(x-y_1,x-y_2) +u^j(x) \partial_{z_2}^j K_M(x-y_1,x-y_2)  \right) f(y_1) f(y_2) dy_1 dy_2 \notag\\
&\qquad + \intint_{\RR^2\times\RR^2} u^j(y_1)  \partial_{y_1}^j K_M(x-y_1,x-y_2)   f(y_1) f(y_2) dy_1 dy_2 \notag\\
&\qquad + \intint_{\RR^2\times\RR^2} u^j(y_2)  \partial_{y_2}^j K_M(x-y_1,x-y_2)  f(y_1)  f(y_2) dy_1 dy_2 \notag\\
&= \intint_{\RR^2\times\RR^2} \left( \big(u^j(x)-u^j(y_1)\big) \partial_{z_1}^j K_M(x-y_1,x-y_2) + \big(u^j(x) - u^j(y_2) \big) \partial_{z_2}^j K_M(x-y_1,x-y_2)  \right) \notag\\
&\qquad \qquad \times f(y_1) f(y_2) dy_1 dy_2 \notag\\
&= \intint_{\RR^2\times\RR^2} \left(\frac{u^j(x)-u^j(y_1)}{x-y_1}\right) (x-y_1)\partial_{z_1}^j K_M(x-y_1,x-y_2) f(y_1) f(y_2) dy_1 dy_2  \notag\\
&\qquad + \intint_{\RR^2\times\RR^2} \left(\frac{u^j(x)-u^j(y_2)}{x-y_2}\right) (x-y_2) \partial_{z_2}^j K_M(x-y_1,x-y_2)    f(y_1) f(y_2) dy_1 dy_2.
\end{align*}
The lemma now follows from condition \eqref{eq:K:convolution:assumption:multi} with $|a|=|b|=1$.
\end{proof}

\vspace{.1in}

\noindent {\bf Acknowledgments.} 
TB was supported by the National Science Foundation  grant DMS-1600868.
SS was supported by the National Science Foundation  grant  DMS-1301380,
and by the Royal Society Wolfson Merit Award.   
VV was partially supported by the National Science Foundation  grant DMS-1514771 and by an 
Alfred P. Sloan Research Fellowship.




\newcommand{\etalchar}[1]{$^{#1}$}

\end{document}